\apptocmd{\thebibliography}{\raggedright}{}{}
\newcommand{\cEpn}{\mathcal{E}^{(p)}_n}
\newcommand{\cEpnCon}{\mathcal{E}^{(p)}_{n,\mathrm{con}}} 
\newcommand{\cFpn}{\mathcal{F}^{(p)}_{\eps_n}}
\newcommand{\cFpnCon}{\mathcal{F}^{(p)}_{\eps_n,\mathrm{con}}} 
\newcommand{\cFp}{\mathcal{F}^{(p)}_{\eps_n}}  
\newcommand{\cFpCon}{\mathcal{F}^{(p)}_{\eps,\mathrm{con}}} 
\newcommand{\cEpinfty}{\mathcal{E}^{(p)}_\infty}
\newcommand{\cEpinftyCon}{\mathcal{E}^{(p)}_{\infty,\mathrm{con}}}
\newcommand{\diff}{\,\dd}
\newcommand{\dH}{d_{\mathrm{H}}}
\newcommand{\Sep}{\mathrm{Sep}}
\newcommand{\tr}{\mathrm{tr}}
\newcommand{\Bias}{\mathrm{Bias}}
\theoremstyle{plain}
\theoremstyle{definition}
\newtheorem{Definition}{Definition}[section]
\title{PDE-Inspired Algorithms for Semi-Supervised Learning on Point Clouds}
\author[1,3]{Oliver M. Crook}
\author[2]{Tim Hurst}
\author[3]{Carola-Bibiane Sch\"{o}nlieb}
\author[3]{Matthew Thorpe}
\author[2]{Konstantinos C. Zygalakis}
\affil[1]{MRC Biostatistics Unit, School of Clinical Medicine, \protect\\ University of Cambridge, \protect\\ Cambridge CB2 0SR, UK\vspace{\baselineskip}}
\affil[2]{School of Mathematics and the Maxwell Institute for Mathematical Sciences,\protect\\ University of Edinburgh,\protect\\ Edinburgh, EH9 3FD, UK \vspace{\baselineskip}}
\affil[3]{Department of Applied Mathematics and Theoretical Physics,\protect\\ University of Cambridge,\protect\\ Cambridge, CB3 0WA, UK}
\date{September 2019}
\begin{document}

\maketitle

\begin{abstract}
Given a data set and a subset of labels the problem of semi-supervised learning on point clouds is to extend the labels to the entire data set.
In this paper we extend the labels by minimising the constrained discrete $p$-Dirichlet energy. 
Under suitable conditions the discrete problem can be connected, in the large data limit, with the minimiser of a weighted continuum $p$-Dirichlet energy with the same constraints.
We take advantage of this connection by designing numerical schemes that first estimate the density of the data and then apply PDE methods, such as pseudo-spectral methods, to solve the corresponding Euler-Lagrange equation.
We prove that our scheme is consistent in the large data limit for two methods of density estimation: kernel density estimation and spline kernel density estimation.
\end{abstract}

\noindent
\keywords{semi-supervised learning, Gamma-convergence, PDEs on graphs, nonlocal variational problems, regression, density estimation}

\noindent
\subjclass{49J55, 68R10, 62G20, 65N12}

\section{Introduction}\label{Sec:Intro}

In many machine learning problems, such as classification or labelling, one often aims to exploit the usually large quantities of data in order to capture its geometry. Frequently in applications labels for some of the data are available but often in low quantities because of the cost of labelling points. In the semi-supervised learning setting we are given
a data set $\bm{x}_i\in \bbR^d$, $i=1,\dots,n$ sampled from an unknown probability measure $\mu$, and a small subset of labelled pairs $(\bm{x}_i,y_i)$, $i=1,\dots, N$ where $y_i\in\bbR$ and we work in the regime $N\ll n$.
Here, the labels $y_i$ for the first $N$ data points are known and we aim to estimate the labels $\{y_i\}_{i=N+1}^n$ for the remaining data points $\{\bm{x}_i\}_{i=N+1}^n$.

One method to assign these labels is to minimise an objective function, which penalises smoothness of assigned labels, under the constraint that known labels are preserved.
A common choice of such an objective function is the graph $p$-Dirichlet energy~\cite{zhu03,zhou05}, which allows one to define a discrete version of a Dirichlet energy.
More precisely, we consider the graph $(\Omega_n,\bm{W})$ of nodes $\Omega_n=\{\bm{x}_i\}_{i=1}^n$ and edge weights $\bm{W}=(W_{ij})_{i,j=1}^n$ where $W_{ij}$ is the edge weight between data points $\bm{x}_i$ and $\bm{x}_j$ (by convention we say there is no edge between $\bm{x}_i$ and $\bm{x}_j$ if $W_{ij} = 0$).
We use the random geometric graph model with length scale $\eps_n$ for defining the edge weights.
Approximately, the parameter $\eps_n$ determines the range at which two nodes become connected; the explicit construction is given in the following section.
The objective functional is defined as the difference between labels weighted by edge weights:
\begin{equation} \label{eq:Intro:conGraphpDirichlet}
	\cEpnCon(f) = \begin{cases} \frac{1}{\eps_n^p n^2} \sum_{i,j=1}^n W_{ij}|f(\bm{x}_i)-f(\bm{x}_j)|^p,\quad &\text{ if }f(\bm{x}_i) = y_i\text{ for all }i=1,\dots,N,\\ +\infty\quad & \text{else.} \end{cases}
\end{equation}

Computing the minimiser of the graph $p$-Dirichlet energy becomes computationally expensive when considering a large number of data points.
However, it has been shown~\cite{SlepcevThorpe2017} that under an admissible scaling regime in $\eps_n$ (and when $p>d$), minimisers of~\eqref{eq:Intro:conGraphpDirichlet} converge to minimisers of the continuum $p$-Dirichlet energy:
\begin{equation} \label{eq:Intro:conpDirichlet}
	\cEpinftyCon(f;\rho) = \begin{cases} \sigma_\eta\int_\Omega |\nabla f(\bm{x})|^p\rho^2(\bm{x})\diff \bm{x} & \text{if } f\in W^{1,p}(\Omega)\text{ and } f(\bm{x}_i) = y_i \text{ for } i=1,\dots,N,\\ +\infty & \text{else}\end{cases}
\end{equation}
where $\rho$ is the density of the data points, and $\sigma_\eta$ is some constant depending only on a weight function $\eta$ (satisfying Definition~\ref{def:weightFunction}).
This result shows that the minimiser of the continuum $p$-Dirichlet energy is an accurate estimate of the minimiser of the graph $p$-Dirichlet energy when considering a large amount of data.
However, the dependency of the continuum $p$-Dirichlet energy on the underlying data density poses a new problem as it is unrealistic to assume that we know the density of the data.
The objective of this paper is to develop the framework for a new numerical method for finding minimisers of $\cEpn$ based on its connection to the continuum variational problem of minimising $\cEpinftyCon$.
In particular, it is our aim to develop a numerical scheme that is efficient for large ($n\gg 1$) datasets.
We refer to the work by Flores Rios, Calder and Lerman~\cite{floresrios19AAA} for algorithms based on the discrete problem. 

We note that an associated {\em non-local} continuum $p$-Dirichlet energy is also of interest;
\[	\cFpnCon(f;\rho) = \begin{cases}  
	\begin{aligned}[t]
	\frac{1}{\eps^p}\int_\Omega\int_\Omega& \eta_{\eps_n}(|\bm{x}-\bm{z}|)
	|f(\bm{x})-f(\bm{z})|^p\\&\times\rho(\bm{x})\rho(\bm{z})\diff \bm{x} \diff \bm{z}. 
	\end{aligned}
	& \text{if } f\in W^{1,p}(\Omega)\text{ and } f(\bm{x}_i) = y_i, i=1,\dots,N,\\ +\infty & \text{else}\end{cases}
\]
for example, $\cFpnCon(f;\rho)$ was considered in \cite{SlepcevThorpe2017} as an intermediary functional to provide convergence between $\cEpnCon(f)$ and $\cEpinftyCon$ and has appeared in~\cite{hafiene18aAAA,hafiene18,hafiene18b} as the continuum limit of $\cEpnCon$ when $\eps_n=\eps$ is fixed (although not using the hard constraint).
We also develop a numerical method for computing minimisers of the above functional that are efficient for large $n$.
\vspace{\baselineskip}

In our approach we, rather than minimise~\eqref{eq:Intro:conGraphpDirichlet}, aim instead to minimise~\eqref{eq:Intro:conpDirichlet}.
Since the density $\rho$ is unknown we are required to estimate it from the data $\Omega_n$.
Density estimation is a well-studied problem in statistics dating back, at least, to Fix and Hodges in 1951~\cite{fix89} and Akaike in 1954~\cite{akaike54}.
The first method we consider here to estimate the density is \emph{kernel density estimation}~\cite{rosenblatt1956,parzen62}.
The idea behind kernel density estimation is to replace the empirical measure $\mu_n=\sum_{i=1}^n\delta_{x_i}$, where $x_i\iid\mu$, with a smooth approximation; in particular, $\mu_n(A) = \frac{1}{n} \sum_{i=1}^n \delta_{\bm{x}_i}(A) \approx \int_A \frac{1}{n} \sum_{i=1}^n K_h(\bm{x} - \bm{x}_i) \, \dd \bm{x}$.
Formally, for $n$ large enough $\mu_n\approx \mu$ and then one expects $\frac{1}{n} \sum_{i=1}^n K_h(\cdot - \bm{x}_i)$ to approximate the density $\rho$ of $\mu$.
Analysing this approximation has been the interest of many statisticians, see for example~\cite{jiang17b,rinaldo10,GINE2002,Silverman1978,stute84,nadaraya65,tsybakov08}.
Our methods are an adaptation of the results by Gin\'{e} and Guillon~\cite{GINE2002}.
We refer to~\cite{devroye01} for an overview on kernel density estimation.

The second type of density estimation we consider is a regularised version of the kernel density estimate.
Our method is to use the kernel density estimate to estimate the value of the density at knot points (that we are free to choose), we then use smoothing splines to produce an estimate of the density.
We call this method the \emph{spline kernel density estimate}.
Although we are unaware of previous work using splines to estimate the density of distributions the idea of introducing regularisation in density estimation is not new, see for example~\cite{whittle58}.

The advantage of using splines is that they introduce additional smoothness into our estimate of the density which allows for better approximations of the density and, due to the volume of results in the literature, are theoretically well understood.
Furthermore, we can take advantage of fast computational methods for solving the spline smoothing problem.
We refer to~\cite{wahba90} for an overview and mention a few select references here.
Convergence in norm of special splines under various settings have been studied in~\cite{bissantz04,bissantz07,claeskens09,hall05,kauermann09,lai13,lukas06,wang11,arcangeli93}, and general splines in~\cite{wahba85,kimeldorf70,cox88,nychka89,carroll91,mair96}.
Similarly weak (pointwise) convergence of special splines has been studied in~\cite{li08,shen11,xiao12,yoshida12,yoshida14} and general splines in~\cite{Thorpe2017}.
\vspace{\baselineskip}

Due to its interest in machine learning the convergence of variational problem~\eqref{eq:Intro:conGraphpDirichlet} to~\eqref{eq:Intro:conpDirichlet} has attracted much interest.
For example, pointwise convergence results have been used to motivate the choice of $p>d$~\cite{NaSrZh09,alamgir11,elalaoui16,belkin07,coifman1,GK,hein_audi_vlux05,hein06,singer06,THJ}; however, pointwise convergence is not enough, in general, to imply variational convergence (convergence of minimisers).
Spectral convergence~\cite{belkin07,vonluxburg08,PelPud11,SinWu13,garciatrillos15aAAA,BIK} (and error bounds~\cite{wang14AAA,GGHS}) shows convergence of minimisers only when $p=2$.
The framework to analyse the discrete-to-variational was developed by Garc\'ia-Trillos and Slep\v{c}ev~\cite{garciatrillos16} and later applied to the constrained problem to show variational convergence when $p>d$ and $\eps_n$ satisfies an upper bound~\cite{SlepcevThorpe2017}.
Using PDE methods Calder \cite{floresrios19AAA} studies the large data limits of two closely related problems.
The first is Lipschitz learning (which corresponds to choosing $p=\infty$)~\cite{calder17AAA}, and the second is the game theoretic $p$-Dirichlet energy~\cite{Calder19}.
\vspace{\baselineskip}

In this paper we show that minimisers of the continuum $p$-Dirichlet energy, in which the density is estimated from the data, converge to a minimiser of the continuum $p$-Dirichlet energy in the large data limit.
After setting up notation and listing the main results in Section~\ref{sec:MainResults}, we do this in two parts.
The first part, in Section~\ref{sec:ConvMini}, gives sufficient conditions for the convergence of $\rho_n\to\rho$ to imply the convergence of minimisers of $\cEpinftyCon(\cdot;\rho_n)$ to minimisers of $\cEpinftyCon(\cdot;\rho)$, and for minimisers of $\cFpnCon(\cdot;\rho_n)$ to converge to minimisers of $\cEpinftyCon(\cdot;\rho)$.
This complements the results of~\cite{SlepcevThorpe2017} which prove convergence of minimisers of $\cEpnCon$ to minimisers of $\cEpinftyCon(\cdot;\rho)$ via the intermediary functional $\cFpnCon(\cdot;\rho)$ (see Figure~\ref{fig:pDirichletDiagram} for a summary).
Then, in the second part, we provide two examples of density estimation schemes, the kernel density estimate and the spline kernel density estimate, that satisfy the conditions in the previous section (see Section~\ref{sec:DensityEstimates}).
Numerical illustration of the results in two dimensions are provided in Section~\ref{sec:Numerics} and we conclude in Section~\ref{sec:Conc}.

\tikzstyle{format} = [draw, thin]
\tikzstyle{medium} = [ellipse, draw, thin, fill=green!20, minimum height=2.5em]

\begin{figure}[ht!]
\centering
\begin{tikzpicture}[node distance=4cm, auto,>=latex', thick]
\path[->] node[format] (discrete) {$\cEpnCon(f)$};
	\path[->] node[format, right of=discrete] (nonLocal)  {$\cFpnCon(f;\rho)$}
	(discrete) edge node[text width=4cm,align=center] {``$n\to\infty$, $\eps_n=\eps$'',\\ \cite{SlepcevThorpe2017}} (nonLocal);
	\path[->] node[format, right of=nonLocal] (continuum)
	{$\cEpinftyCon(f;\rho)$}
	(nonLocal) edge node[text width=1.6cm,align=center] {``$\eps_n\to0$'',\\ \cite{SlepcevThorpe2017}} (continuum);
	\path[->] node[format, below of=nonLocal] (nonLocalDE)
	{$\cFpnCon(f;\rho_n)$}
	(nonLocalDE) edge node[text width=3.5cm,align=center] {``$n\to\infty,\eps_n\to0$'',\\
		(Theorem~\ref{thm:MainResults:minpLapConvergence})} (continuum);
	\path[->] node[format, below of=continuum] (continuumDE)
	{$\cEpinftyCon(f;\rho_n)$}
	(continuumDE) edge[right] node[text width=3.5cm,align=center]
	{``$n\to\infty$'',\\ (Theorem~\ref{thm:MainResults:Minimizersnp})} (continuum);
\end{tikzpicture}
\caption{Diagram depicting how various $p$-Dirichlet energies discussed in this paper are related, and where their convergence results can be found.}\label{fig:pDirichletDiagram}
\end{figure}
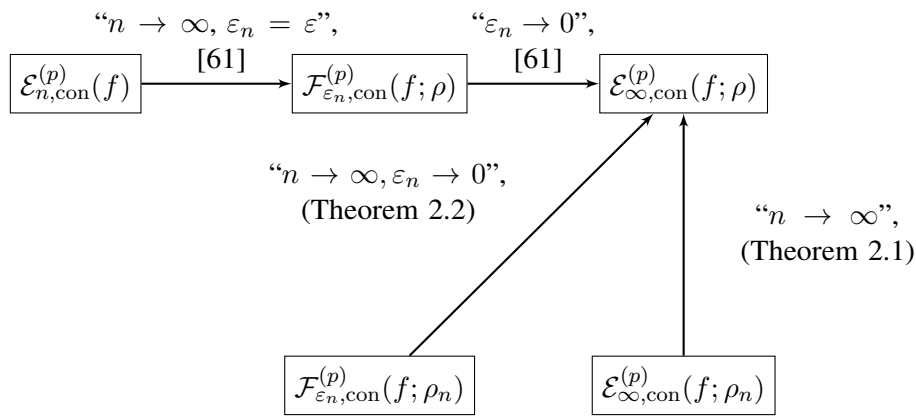

\section{Setting and Main Results} \label{sec:MainResults}

\subsection{Notation}

We consider functions on an open, bounded and connected domain $\Omega\subseteq\mathbb{R}^d$ with Lipschitz boundary.
Given a positive Radon measure $\mu\in\cM_+(\Omega)$ (where usually $\mu$ is a probability measure) we let $L^p(\mu)$ denote the space of functions for which the $p^{\text{th}}$ power of the absolute value is integrable with respect to $\mu$ and the usual norm $\|f\|_{L^p(\mu)}$.
When $\mu=\cL\lfloor_\Omega$, the Lebesgue measure on $\Omega$ we write, with a small abuse of notation, $L^p(\Omega)$ instead of $L^p(\cL\lfloor_\Omega)$.
Sobolev spaces, denoted by $W^{m,p}(\Omega)$, are the space of functions where the $p^{\text{th}}$ power of the absolute value of the first $m$ (weak) derivatives are integrable with respect to the Lebesgue measure.
When $p=2$ we also write $W^{m,2}= H^m$.
The norm $\|f\|_{W^{m,p}(\Omega)}$ on $W^{m,p}$ is defined in the usual way.
We use $C^{0,\alpha}$ to define the H\"{o}lder space with norm $\|f\|_{C^{0,\alpha}(\Omega)}$.

Throughout we assume we have a data set $\Omega_n=\{\bm{x}_i\}_{i=1}^n$ where $\bm{x}_i\iid \mu$ and $\mu\in \cP(\Omega)$.
Given such a data set we define the \emph{empirical measure} by
\[ \mu_n(\bm{x}) = \frac{1}{n}\sum_{i=1}^n \delta(\bm{x}-\bm{x}_i), \]
where $\delta(\bm{x})$ is the Dirac function.

We write $\Omega^\prime\subset\subset\Omega$ to mean that $\Omega^\prime$ is a compact subset of $\Omega$ and $\dH$ is the Hausdorff distance between sets in $\bbR^d$.

\subsection{Dirichlet Energies: Setup}

The following definition is used to construct the weights of the graph given data $\Omega_n = \{\bm{x}_i\}_{i=1}^n$.

\begin{Definition}
	\label{def:weightFunction}
	A function $\eta:[0,+\infty)\to[0,+\infty)$ is a \textbf{weight function} if it is a decreasing function with $\lim_{r\rightarrow+\infty}\eta(r)=0$, and is positive and continuous at $r=0$.
\end{Definition}

We prescribe weights between the points $\bm{x}_i$ and $\bm{x}_j$ using a weight function $\eta$: for a fixed $\eps>0$, the weight between two points $\bm{x}_i$ and $\bm{x}_j$ is defined by
\begin{align} 
	W_{ij} = \eta_{\eps}(|\bm{x}_i-\bm{x}_j|), \label{eq:Weights}
\end{align}
where $\eta_{\eps}(\cdot) = \frac{1}{\eps^d}\eta\l\frac{\cdot}{\eps}\r$.
This can be used to define a maximum distance $\eps$ for which particles have a non-zero weight; for example $\eta(t)=1$ for $t<1$ and $\eta(t) = 0$ otherwise. Then then $W_{ij}$ is positive only when $|\bm{x}_i-\bm{x}_j|<\eps$.
In the remainder of this section we define the various Dirichlet energies which are considered in the sequel.

\begin{Definition}
\label{def:MainResults:cEpn}
Let $p\in (1,+\infty)$, $\Omega_n = \{\bm{x}_i\}_{i=1}^n \subset \bbR^d$ and define $\mu_n$ to be the empirical measure and $W_{ij}$ as in~\eqref{eq:Weights} where $\eta$ is a weight function.
We define the \textbf{discrete $p$-Dirichlet energy} by
\begin{align*}
	& \cEpn: L^p(\mu_n) \to [0,+\infty) \notag \\
	& \cEpn(f) = \frac{1}{\eps^pn^2}\sum_{i,j=1}^n W_{ij}|f(\bm{x}_i)-f(\bm{x}_j)|^p.
\end{align*}
\end{Definition}

When the data generating distribution $\mu$ has density $\rho$ (with respect to the Lebesgue measure) then the large data limit, in the sense of $\Gamma$-convergence, is given by the continuum $p$-Dirichlet energy $\cEpinfty$ defined below.
Indeed, when $p=1$ it was shown in~\cite{garciatrillos16} that $\Glim_{n\to+\infty} \cE^{(1)}_n = \cE^{(1)}_\infty(\cdot;\rho)$ where $\cE^{(1)}_\infty(\cdot;\rho)$ is a weighted total variation (and takes a slightly different form to the class of energies given below).
The proof, however, generalises to any $p\in [1,+\infty)$. 

\begin{Definition}
\label{def:MainResults:cEpinfty}
Let $p\in (1,+\infty)$, $\Omega\subset\bbR^d$ be an open, bounded and connected domain with Lipschitz boundary, and $\rho\in L^\infty(\Omega)$ be a non-negative function.
Let $\eta$ be a weight function and assume that
\begin{equation}\label{eq:etaIntegral}
	\sigma_\eta := \int_{\bbR^d}\eta(|\bm{x}|)|\bm{x}\cdot \bm{e}_1|^p\diff \bm{x} <+\infty,
\end{equation}
where $\bm{e}_1 = (1,0,\dots,0)$.
We define the \textbf{continuum $p$-Dirichlet energy with respect to $\rho$} by
\begin{align}
	& \cEpinfty(\cdot;\rho):L^{p}(\Omega)\to [0,+\infty], \notag \\
	& \cEpinfty(f;\rho) = \begin{cases} \sigma_\eta\int_\Omega|\nabla f(\bm{x})|^p \rho^2(\bm{x}) \diff \bm{x} & \text{if } f\in W^{1,p}(\Omega),\\ +\infty & \text{else} \end{cases} \label{eq:MainResults:cEpinfty}
\end{align}
\end{Definition} 

The parameter $p$ controls the amount of regularity.
The results of this paper concern a finite choice of $p$ since for $p=+\infty$ the Dirichlet energy loses sensitivity to the density of the data.
In fact, it is easy to show that the variational limit as $p\to+\infty$ (with $n$ fixed and after renormalising with respect to $p$) is the Lipschitz learning problem:
\[ \cE^{(\infty)}_n(f) = \max_{i,j\in \{1,\dots, n\}} W_{ij} |f(\bm{x}_i)-f(\bm{x}_j)|, \]
see for example~\cite{cristoferi18} for the computation with a similar objective.
The objective of this paper is to build numerical methods by estimating the density of data.
For Lipschitz learning the data distribution appears in the continuum limit only through it's support; in particular, the intensity is irrelevant.
More precisely, it is known (see~\cite{elalaoui16} for pointwise limits and \cite{calder17AAA} for variational limits in the semi-supervised setting) that the large data limit of $\cE^{(\infty)}_n$ is
\[ \cE^{(\infty)}_\infty(f) = \sup_{\bm{x}\in\Omega} |\nabla f(\bm{x})|. \]
Hence, one needs only to estimate the support of the data, not the density and so Lipschitz learning falls outside the scope of our method.
\vspace{\baselineskip}

We work in the semi-supervised setting; that is, we assume that we have labels $y_i$, $i=1,\dots, N$ for the first $N$ data points where $N$ is fixed.
To estimate labels at the remaining $n-N$ data points we use the Dirichlet energies to define a notion of regularity.
More precisely, we minimise the Dirichlet energies subject to agreeing with the training data:
\begin{equation} 
\text{minimise } \cEpn(f) \qquad \text{subject to} \qquad f(\bm{x}_i) = y_i \quad \forall i=1,\dots, N. \label{eq:minimizationProblem}
\end{equation}
Analogously for the continuum Dirichlet energy.
It will be convenient to define the constrained energies as follows. 

\begin{Definition}
\label{def:MainResults:cEpnCon}
Under the setting and notation of Definition~\ref{def:MainResults:cEpn}, and given labels $y_i$ for $i=1,\dots, N$ we define the \textbf{constrained discrete $p$-Dirichlet energy} $\cEpnCon:L^p(\mu_n)\to [0,+\infty]$ by \cref{eq:Intro:conGraphpDirichlet}.
\end{Definition}

\begin{Definition}
In addition to the setting and notation of Definition~\ref{def:MainResults:cEpinfty} assume that $p>d$.
Then given labels $y_i$ for $i=1,\dots, N$ we define the \textbf{constrained continuum $p$-Dirichlet energy with respect to $\rho$} $\cEpinftyCon:L^p(\Omega)\to [0,+\infty]$  by \cref{eq:Intro:conpDirichlet}
\end{Definition}

Note that we require $p>d$ in order for the constrained continuum $p$-Dirichlet energy to be well defined.
More precisely, for any $f$ with $\cEpinfty(f;\rho)<+\infty$ we necessarily have that $f\in W^{1,p}(\Omega)$ and hence by Sobolev embedding (Morrey's inequality) $f$ can be identified with a continuous function, and therefore pointwise evaluation $f(\bm{x}_i)$ can be defined.
For $p\leq d$ the constrained Dirichlet energy can no longer be defined in the continuum setting.
\vspace{\baselineskip}

We also define the non-local continuum approximation $\cFpn$ of $\cEpn$.
This has been used as an intermediary functional in the discrete-to-continuum analysis of the $p$-Dirichlet energies, for example~\cite{garciatrillos16,SlepcevThorpe2017} (as mentioned in Section~\ref{Sec:Intro}), but is also of interest in its own right.

\begin{Definition}
\label{def:MainResults:cFpn}
Let $p\in (1,+\infty)$, $\Omega\subset\bbR^d$ be an open, bounded and connected domain with Lipschitz boundary, and $\rho\in L^\infty(\Omega)$ be a non-negative function.
Let $\eta$ be a weight function and then, we define the \textbf{non-local continuum $p$-Dirichlet energy with respect to $\rho$} by
\begin{align*}
	& \cFp(\cdot,\rho):L^p(\Omega)\to [0,+\infty), \\
	& \cFp(f;\rho) = \frac{1}{\eps^p}\int_\Omega\int_\Omega \eta_{\eps_n}(|\bm{x}-\bm{z}|)|f(\bm{x})-f(\bm{z})|^p\rho(\bm{x})\rho(\bm{z})\diff \bm{x} \diff \bm{z}.
\end{align*}
\end{Definition}

We note that we are no longer able to impose pointwise constraints on $\cFpn$.
Although $\cFpn$ is approximating a Sobolev semi-norm (when $\eps$ is small) we are still working on an $L^p$ space with no continuity implied, and therefore one cannot impose pointwise constraints
We overcome this by instead imposing the constraints on small balls around $\bm{x}_i$, $i=1,\dots, N$.
For our analysis we require that the balls have radius at least $\eps$ which leads us to define the constrained non-local continuum model as follows.

\begin{Definition}
Under the setting and notation of Definition~\ref{def:MainResults:cFpn}, and given labels $y_i$ for $i=1,\dots, N$ we define the \textbf{constrained non-local continuum $p$-Dirichlet energy with respect to $\rho$} by
\begin{align*}
	& \cFpCon:L^p(\Omega)\to [0,+\infty], \notag \\
	& \cFpCon(f;\rho) = \begin{cases} \cFpn(f;\rho), & \text{if } f(\bm{x}) = y_i \text{ for } i=1,\dots,N, \text{ and } \bm{x}\in B(\bm{x}_i,\eps), \\ +\infty, & \text{else.} \end{cases}
\end{align*}
\end{Definition}

In the above definition we make the assumption that $B(\bm{x}_i,\eps)\cap B(\bm{x}_j,\eps) = \emptyset$ for all $i,j=1,\dots, N$.
This is clearly satisfied for $\eps$ sufficiently small.

\subsection{Large Data Asymptotics for Dirichlet energies}

For the results given in this section we make the following assumptions:
\begin{enumerate}[label=(A{{\arabic*}})]
	\item\label{Assum1} $\Omega\subset\mathbb{R}^d$ is an open, connected, bounded domain with Lipschitz boundary, with associated probability measure $\mu\in\mathcal{P}(\Omega)$.
	\item\label{Assum2} The probability measure $\mu$ has continuous density $\rho\in L^\infty(\Omega)$.
	\item\label{Assum3} The density $\rho$ is bounded above and below by strictly positive constants.
	\item\label{Assum4} For $i=1,..,N$, the points $\bm{x}_i\in\Omega$ are labelled with values $y_i\in\bbR$.
	\item\label{Assum5} For $n \ge i>N$, the points $\bm{x}_i\in\Omega$ are i.i.d. samples of $\mu$.
	\item\label{Assum6} $\eta:[0,\infty)\rightarrow[0,\infty)$ is a weight function, and weights $W_{ij}$ are defined by~\eqref{eq:Weights} for $i,j=1,\dots,n$ and $\eps=\eps_n$.
	\item\label{Assum7} The integral $\sigma_\eta$ as defined in~\eqref{eq:etaIntegral} is finite.
	\item\label{Assum8} The smoothing parameter takes a value larger than the dimension of the data, $p>d$.
\end{enumerate}
Under assumptions \labelcref{Assum1,Assum2,Assum3,Assum4,Assum5,Assum6,Assum7,Assum8}, and the following scaling on $\eps_n = \eps$,
\begin{equation} \label{eq:MainResults:epsilonBounds}
	\l\frac{1}{n}\r^{\frac{1}{p}} \gg \eps_n \gg \lb \begin{array}{ll} \l\frac{\log(n)}{n}\r^{\frac{1}{d}} & \text{if } d\geq 3 \\ \frac{\l\log(n)\r^{\frac{3}{4}}}{\sqrt{n}} & \text{if } d=2 \end{array} \rd
\end{equation}
minimisers of~\eqref{eq:Intro:conGraphpDirichlet} converge to minimisers of~\eqref{eq:Intro:conpDirichlet}~\cite{SlepcevThorpe2017}.
Furthermore, when $p<d$, minimisers of~\eqref{eq:Intro:conGraphpDirichlet} converge to minimisers of~\eqref{eq:MainResults:cEpinfty} (i.e. constants), and so the constraints are lost as $n\to\infty$. This result allows us to approximate minimisers of~\eqref{eq:Intro:conGraphpDirichlet} by its continuum analogue,~\eqref{eq:Intro:conpDirichlet}. However, in general we may not know the density $\rho$.
To make use of the continuum formulation for finite data, it is therefore necessary to estimate the density $\rho$ using the information available; the data points $\bm{x}_i$, $i=1,\dots,n$. This is the focus of the first main result of this paper.

For this result, we include the following assumption on the estimate of $\rho$:
\begin{enumerate}[label=(A{{\arabic*}})]\setcounter{enumi}{8}
\item\label{Assum9} The density estimate $\rho_n:\Omega\to\mathbb{R}\in L^\infty(\Omega)$, satisfies $\sup_{n\in\bbN} \|\rho_n\|_{L^\infty(\bbR^d)}<+\infty$ and $\rho_n\to\rho$ in $L^\infty_\loc(\Omega)$, {\em i.e.}, for all $\Omega^\prime\subset\subset \Omega$
\[	\sup_{\bm{x}\in\Omega^\prime}|\rho_n(\bm{x}) - \rho(\bm{x})|\to 0. \]
\end{enumerate}

Under the above assumptions we can prove the convergence of minimisers of $\cEpinftyCon(\cdot;\rho_n)$ to a minimiser of $\cEpinftyCon(\cdot;\rho)$.
A sequence $f_n$ is a sequence of almost minimisers if there exists $\delta_n\to 0^+$ such that
\[ \min_{f\in W^{1,p}} \cEpinftyCon(f;\rho_n) \geq \cEpinftyCon(f_n;\rho) - \delta_n. \]

\begin{theorem}[Convergence of minimisers of the local model]
\label{thm:MainResults:Minimizersnp}
Assume $\Omega,\mu,\eta,p,\rho, \rho_n$ and $\{\bm{x}_i\}_{i=1}^n$ satisfy Assumptions \ref{Assum1}-\ref{Assum9}.
Then, (i) minimisers of $\cEpinftyCon(\cdot;\rho_n)$ are precompact in $L^p_{\loc}(\Omega)$,
\[ \text{(ii)} \, \min_{f\in W^{1,p}} \cEpinftyCon(\cdot;\rho) = \lim_{n\to\infty}\min_{f\in W^{1,p}} \cEpinftyCon(\cdot;\rho_n), \]
and (iii) any converging sequence of almost minimisers of $\cEpinftyCon(\cdot;\rho_n)$ converges in $L^\infty_{\loc}(\Omega)$ to a minimiser of $\cEpinftyCon(\cdot,\rho)$.
\end{theorem}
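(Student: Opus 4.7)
The plan is to treat the statement as a $\Gamma$-convergence result for the family $\cEpinftyCon(\cdot;\rho_n)$ together with a compactness result, and then appeal to the fundamental theorem of $\Gamma$-convergence to extract (ii) and (iii). Throughout I will exploit that $p>d$, so that Morrey's embedding allows pointwise evaluation and chains together local uniform convergence with the pointwise constraints.

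For compactness (i), I would first note that by \ref{Assum9} together with the strict positivity \ref{Assum3}, for every $\Omega^\prime\subset\subset\Omega$ there exists $c_{\Omega^\prime}>0$ such that $\rho_n(\bm{x})\geq c_{\Omega^\prime}$ on $\Omega^\prime$ for all sufficiently large $n$. Since the sequence of minimisers $f_n$ satisfies $\cEpinftyCon(f_n;\rho_n)\leq \cEpinftyCon(f;\rho_n)$ for any admissible test function $f$, and the right-hand side is bounded uniformly in $n$ (via the limsup argument below), we deduce $\|\nabla f_n\|_{L^p(\Omega^\prime)}\leq C_{\Omega^\prime}$. Using the pointwise constraint $f_n(\bm{x}_i)=y_i$ at interior points $\bm{x}_i\in\Omega$ together with Morrey's inequality ($p>d$), one controls $\|f_n\|_{C^{0,\alpha}(\Omega^\prime)}$ uniformly on compact subsets containing at least one labelled point; connectedness of $\Omega$ then propagates the bound via a chaining/Poincar\'e argument to every $\Omega^\prime\subset\subset\Omega$. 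Arzel\`a--Ascoli then yields precompactness in $L^\infty_{\loc}(\Omega)$, which is stronger than the claimed $L^p_{\loc}$ precompactness.

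For the limsup, I would fix any admissible $f\in W^{1,p}(\Omega)$ with $\cEpinftyCon(f;\rho)<+\infty$ and estimate
\[
\bigl|\cEpinftyCon(f;\rho_n)-\cEpinftyCon(f;\rho)\bigr|\leq \sigma_\eta\int_\Omega |\nabla f|^p\,|\rho_n^2-\rho^2|\diff\bm{x}.
\]
Split the integral over $\Omega^\prime$ and $\Omega\setminus\Omega^\prime$. On $\Omega^\prime$ use uniform convergence $\rho_n\to\rho$ from \ref{Assum9}; on the complement use the uniform $L^\infty$ bound $\sup_n\|\rho_n\|_\infty<\infty$ together with absolute continuity of $|\nabla f|^p\,\mathrm{d}\bm{x}$ to make the tail arbitrarily small by choosing $\Omega^\prime$ large. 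Hence $\cEpinftyCon(f;\rho_n)\to\cEpinftyCon(f;\rho)$, which gives $\limsup_n\min\cEpinftyCon(\cdot;\rho_n)\leq \min\cEpinftyCon(\cdot;\rho)$ after taking the infimum over $f$.

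For the liminf, let $f_n$ be a sequence of almost minimisers converging in $L^\infty_{\loc}$ to some $f$. On each $\Omega^\prime\subset\subset\Omega$, the uniform bounds from Step 1 give $\nabla f_n\rightharpoonup \nabla f$ weakly in $L^p(\Omega^\prime)$; since $\rho_n^{2/p}\to \rho^{2/p}$ uniformly on $\Omega^\prime$, the product $\rho_n^{2/p}\nabla f_n$ converges weakly in $L^p(\Omega^\prime)$ to $\rho^{2/p}\nabla f$, and weak lower semicontinuity of the $L^p$-norm yields
\[
\liminf_{n\to\infty}\int_{\Omega^\prime}|\nabla f_n|^p\rho_n^2\diff\bm{x}\geq \int_{\Omega^\prime}|\nabla f|^p\rho^2\diff\bm{x}.
\]
Exhausting $\Omega$ by such $\Omega^\prime$ and using monotone convergence gives $\liminf_n\cEpinftyCon(f_n;\rho_n)\geq \cEpinftyCon(f;\rho)$. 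The constraints pass to the limit because $L^\infty_{\loc}$ convergence ensures $f(\bm{x}_i)=\lim_n f_n(\bm{x}_i)=y_i$. Combining this with the limsup yields (ii), and shows that every subsequential limit of almost minimisers is a minimiser of $\cEpinftyCon(\cdot;\rho)$, which is (iii). The main obstacle is handling the lack of uniform convergence of $\rho_n$ up to $\partial\Omega$: one has to carefully combine the local uniform convergence with the global $L^\infty$ bound and the integrability of $|\nabla f|^p$ to close both the limsup estimate and the exhaustion step in the liminf.
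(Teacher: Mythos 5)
Your proposal is correct and follows essentially the same route as the paper: a lower bound on $\rho_n$ on compact subsets to control $\|\nabla f_n\|_{L^p(\Omega')}$, a Poincar\'e-type argument anchored at the labelled points combined with Morrey's embedding for compactness, the constant recovery sequence $f_n=f$ with the tail over $\Omega\setminus\Omega'$ handled by dominated convergence, and weak lower semicontinuity for the liminf, all assembled via the fundamental theorem of $\Gamma$-convergence. The only cosmetic differences are that the paper proves an explicit modified Poincar\'e inequality (averaging over the $N$ constraint points) where you invoke a chaining argument, and it uses two-sided multiplicative bounds $(1\pm\delta_n/\inf\rho)^2$ on the energies where you pass to the weak limit of $\rho_n^{2/p}\nabla f_n$ directly.
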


Furthermore, we show an analogous non-local result.

\begin{theorem}[Convergence of minimisers of the non-local model] \label{thm:MainResults:minpLapConvergence}
Assume $\Omega,\mu,\eta,p,\rho, \rho_n$ and $\{\bm{x}_i\}_{i=1}^n$ satisfy Assumptions \ref{Assum1}-\ref{Assum9}.
Then, (i) minimisers of $\cFpnCon(\cdot;\rho_n)$ are precompact in $L^p_{\loc}(\Omega)$,
\[ \text{(ii)} \, \min_{f\in W^{1,p}} \cEpinftyCon(\cdot;\rho) = \lim_{n\to\infty}\inf_{f\in W^{1,p}} \cFpnCon(\cdot;\rho_n), \]
and (iii) any converging sequence of minimisers of $\cFpnCon(\cdot;\rho_n)$ converges in $L^\infty_{\loc}(\Omega)$ to a minimiser of $\cEpinfty(\cdot;\rho_n)$.
\end{theorem}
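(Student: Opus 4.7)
My plan is to prove all three statements simultaneously by a $\Gamma$-convergence argument combined with compactness, and then appeal to the standard consequence of $\Gamma$-convergence for convergence of (almost) minimisers. The two main ingredients I will lean on are (a) the Slepčev--Thorpe $\Gamma$-convergence of $\cFpnCon(\cdot;\rho)$ to $\cEpinftyCon(\cdot;\rho)$ at the \emph{fixed} true density $\rho$, and (b) the assumption that $\rho_n\to\rho$ in $L^\infty_\loc(\Omega)$ with a uniform $L^\infty$ bound (Assumption~\ref{Assum9}), which lets me replace $\rho_n$ by $\rho$ on any compactly contained subdomain up to a multiplicative error $(1\pm\delta_n)$ with $\delta_n\to 0$.

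For the liminf inequality, given $f_n\to f$ in $L^p_\loc(\Omega)$ with $\cFpnCon(f_n;\rho_n)$ bounded, I would restrict the double integral to $\Omega'\times\Omega'$ for $\Omega'\subset\subset\Omega$, use $\rho_n(\bm{x})\rho_n(\bm{z}) \geq (1-\delta_n)\rho(\bm{x})\rho(\bm{z})$, and then invoke the known liminf for $\cFpnCon(\cdot;\rho)$ on $\Omega'$. Letting $\Omega'\nearrow\Omega$ yields $\liminf_n \cFpnCon(f_n;\rho_n) \geq \cEpinftyCon(f;\rho)$ and forces $f\in W^{1,p}$. For the constraint $f(\bm{x}_i)=y_i$ to pass to the limit, I use Morrey's inequality (applicable because $p>d$) to upgrade $L^p_\loc$ convergence of $f_n$ to uniform convergence on compacts, together with $f_n\equiv y_i$ on the shrinking ball $B(\bm{x}_i,\eps_n)$. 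Compactness (claim (i)) is similar: the lower bound $\rho_n \geq \tfrac{1}{2}\inf_\Omega \rho$ on $\Omega'$ for large $n$ turns an $\cFpnCon(f_n;\rho_n)$-bound into an $\cFp(f_n;\rho)$-bound over $\Omega'$, and the Garc\'ia-Trillos--Slepčev compactness theorem extracts an $L^p_\loc$ convergent subsequence; the $L^\infty_\loc$ upgrade in claim (iii) then follows again by Morrey.

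For the limsup inequality, I start from a minimiser $f^\star$ of $\cEpinftyCon(\cdot;\rho)$ (continuous by Morrey) and build a recovery sequence $\tilde f_n$ that equals $y_i$ on $B(\bm{x}_i,\eps_n)$, equals a small mollification of $f^\star$ outside $B(\bm{x}_i,2\eps_n)$, and interpolates smoothly across the annulus. Since $f^\star(\bm{x}_i)=y_i$ and $f^\star\in C^{0,\alpha}$, the deviation $|f^\star(\bm{x})-y_i|$ is $O(\eps_n^\alpha)$ on the annulus, so the non-local energy contribution of the transition region is $o(1)$. Combined with $\rho_n\to\rho$ locally uniformly and the known limsup construction at fixed $\rho$, this gives $\limsup_n \cFpnCon(\tilde f_n;\rho_n)\leq \cEpinftyCon(f^\star;\rho)$.

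Assembling the three pieces and invoking the standard $\Gamma$-convergence theorem delivers (i)--(iii). The step I expect to be the main obstacle is the construction of the recovery sequence: balancing the width of the transition annulus against the bandwidth of the mollification and the Hölder modulus of $f^\star$ so that the non-local energy on the annulus is genuinely $o(1)$, while simultaneously respecting the enlarged hard constraint $\tilde f_n\equiv y_i$ on $B(\bm{x}_i,\eps_n)$, is where most of the care is needed.
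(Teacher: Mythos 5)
Your overall architecture coincides with the paper's: compactness plus a $\Gamma$-convergence argument (liminf by restricting to $\Omega'\times\Omega'$, replacing $\rho_n$ by $(1\pm\delta_n)\rho$, invoking the Slep\v{c}ev--Thorpe liminf at fixed $\rho$, and sending $\Omega'\to\Omega$; compactness via the nonlocal-to-local comparison and Morrey), followed by the standard theorem on convergence of minimisers. The liminf and compactness parts of your plan are sound and essentially identical to the paper's.

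The gap is in your recovery sequence, precisely at the step you flag as delicate. You build the recovery sequence directly at the minimiser $f^\star$, which by Morrey is only $C^{0,\alpha}$ with $\alpha\le 1-\tfrac{d}{p}$, and you claim the nonlocal energy of the transition annulus is $o(1)$ because $|f^\star(\bm{x})-y_i|=O(\eps_n^\alpha)$ there. Count exponents: a transition bridging a gap of size $\eps_n^\alpha$ over an annulus of width $\eps_n$ has gradient of order $\eps_n^{\alpha-1}$ on a set of volume of order $\eps_n^d$, so its ($p$-Dirichlet, hence also nonlocal) energy is of order $\eps_n^{p\alpha+d-p}$. Equivalently, in the paper's estimate the correction term is $\eps_n^{-p}\|f_n-f\|_{L^p}^p\lesssim \eps_n^{p\alpha+d-p}$. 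This is $o(1)$ only if $\alpha>1-\tfrac{d}{p}$, which Morrey never gives: at the sharp exponent $\alpha=1-\tfrac{d}{p}$ the contribution is $O(1)$ (this is exactly the statement that points have positive $W^{1,p}$-capacity relative to their Hölder scale when $p>d$), and for any $\alpha<1-\tfrac{d}{p}$ it blows up. Mollifying first does not help: with mollification parameter $\delta$ fixed the gap $|f^\star_\delta(\bm{x}_i)-y_i|$ does not vanish with $\eps_n$, and with $\delta=\delta_n\to0$ the Lipschitz constant of $f^\star_\delta$ degenerates at the same borderline rate. The paper circumvents this by constructing the recovery sequence only for \emph{Lipschitz} $f$ (where $\|f_n-f\|_{L^p}^p=O(\eps_n^{p+d})$ so the correction is $O(\eps_n^d)$), and then passing to general $W^{1,p}$ functions by density of Lipschitz functions and a diagonalisation. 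To repair your argument you either need that detour, or a genuinely finer modulus than the raw Hölder bound — e.g.\ the refined Morrey estimate $|f^\star(\bm{x})-f^\star(\bm{x}_i)|\le C|\bm{x}-\bm{x}_i|^{1-d/p}\|\nabla f^\star\|_{L^p(B(\bm{x}_i,2|\bm{x}-\bm{x}_i|))}$, whose last factor tends to $0$ as $\eps_n\to0$ and upgrades the borderline $O(1)$ to $o(1)$; as written, your claim does not follow from the estimate you state.
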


The proof of both theorems is given in Section~\ref{sec:ConvMini}.
In Sections~\ref{subsec:MainResults:DensSetup} and~\ref{subsec:MainResults:DensResults} we give two examples on how to construct density estimates that satisfy Assumption~\ref{Assum9} (with probability one).

\subsection{Density Estimates: Set up} \label{subsec:MainResults:DensSetup}

Both of the convergence results from the previous section rely on having a density estimate which converge locally uniformly with probability one.
We consider two examples of density estimates with this convergence property: the kernel density estimate (KDE) and the closely related spline kernel density estimate (SKDE). 

\paragraph{Kernel Density Estimate}
Recall the empirical measure $\mu_n = \frac{1}{n} \sum_{i=1}^n \delta(\cdot - \bm{x}_i)$.
The kernel density estimate (KDE) can be viewed as a continuous approximation to the empirical measure, where each Dirac function is approximated by a function with particular properties, known as a {\em kernel function}; which is a function integrating to unity, i.e. $\int_{\bbR^d} K(\bm{x}) = 1$.
A popular choice is the {\em Gaussian kernel function}
\[	K(x) = \frac{1}{(2\pi)^{d/2}}\exp\left(-\frac{\|\bm{x}\|^2}{2}\right). \]
Other popular choices include the uniform and Epanechnikov kernels.
In general kernel functions do not have to be symmetric or positive, we refer to~\cite{QiJeffrey2007,hansen09} for more examples of kernel functions.
In our numerical experiments in Section~\ref{sec:Numerics} we choose the Gaussian kernel.

We define the kernel density estimate as follows.

\begin{Definition}
\label{def:MainResults:KDE}
Given $\bm{x}_i\subset\Omega$ for $i=1,\dots,n$ and a bandwidth $h>0$, the {\em Kernel density estimate} $\rho_{n,h}:\Omega\to\bbR$ is defined by
\begin{equation} \label{eq:MainResults:KDE}
	\rho_{n,h}(\bm{x}) = \frac{1}{n}\sum_{i=1}^n K_{h}\left(\bm{x}-\bm{x}_i\right).
\end{equation}
where $K_{h}(\bm{x}) = \frac{1}{h^d}K\l\bm{x}/h\r$ and $K:\bbR^d\to\bbR$ integrates to unity.
\end{Definition}

We note that, with additional notational complexity one can generalise the bandwidth to a positive semidefinite matrix $\bm{H}$, i.e. 
\[ \rho_{n,\bm{H}}(\bm{x}) = \frac{1}{n}\sum_{i=1}^n K_{\bm{H}}\left(\bm{x}-\bm{x}_i\right) \]
where $K_{\bm{H}}(\bm{x}) = \frac{1}{|\bm{H}|}K\l H^{-1}\bm{x}\r$.
In the sequel we treat the special case where $\bm{H} = h\Id$.

When $h\to 0$ we regain the empirical measure $\mu_n$, i.e. $\lim_{h\to 0^+} \int_A \, \dd \rho_{n,h}(\bm{x}) = \mu_n(A)$ for all open sets $A$.
We shall see that to guarantee convergence to the continuous density $\rho$, we will require a lower bound on the rate at which $h\to 0$.
We state the convergence result in the next subsection.

\paragraph{Spline Kernel Density Estimate}
To find the minimiser of $\cEpinftyCon(\cdot;\rho_n)$ we use a gradient flow which involves the density estimate $\rho_n$ and its derivative.
It is therefore of interest to have a smooth approximation of $\rho$.
Our strategy is to regularise the kernel density estimate.

One way to do this is to solve the variational problem:
\[ \text{minimise } \| u - \rho_{n,h}\|_{L^2(\Omega)}^2 + \lambda \|\nabla^m u\|_{L^2(\Omega)}^2 \quad \text{over } u\in H^m(\Omega). \]
Drawing inspiration from the spline smoothing community we approximate the first term by 
\[ \| u - \rho_{n,h}\|_{L^2(\Omega)}^2 \approx \frac{1}{T} \sum_{i=1}^T |u(\bm{t}_i) - \rho_{n,h}(\bm{t}_i) |^2 \]
where $\{\bm{t}_i\}_{i=1}^T$ are called \emph{knot points} (which we are free to choose).
We therefore consider the variational problem:
\begin{equation} \label{eq:MainResults:SlambdaT}
S_{\lambda,T}(f) := \argmin_{u\in H^m(\Omega)}\left\{ \frac{1}{T} \sum_{i=1}^T (u(\bm{t}_i)-f_i)^2 + \lambda\|\nabla^m u\|_{L^2(\Omega)}^2 \right\}.
\end{equation}
We define the projection operator
\begin{equation} \label{eq:MainResults:PT}
P_T:H^m(\Omega) \to \bbR^T, \qquad P_T(f) = f(\bm{t}_i)
\end{equation}
which is well-defined whenever $m>d/2$.

\begin{Definition}
\label{def:MainResults:SKDE}
Given a set of knot points $\{\bm{t}_i\}_{i=1}^T \subset \Omega$ and the kernel density estimate $\rho_{n,h}$ we define the \emph{spline kernel density estimate} (SKDE) by
\begin{equation} \label{eq:SKDE}
\rho_{n,h,\lambda,T} = S_{\lambda,T}(P_T(\rho_{n,h}))
\end{equation}
where $S_{\lambda,T}$ is defined by~\eqref{eq:MainResults:SlambdaT} and $P_T$ is defined by~\eqref{eq:MainResults:PT}.
\end{Definition}

In the following subsection we give $L^\infty$ convergence rates with probability 1 for the SKDE (in fact these results are a corollary of almost sure convergence results in $H^m$ which when $m>d/2$ imply uniform convergence via Sobolev embeddings - we refer to Section~\ref{sec:DensityEstimates} for details).
\vspace{\baselineskip}

We give a numerical comparison of the KDE and the SKDE in Section~\ref{sec:Numerics}.
Moreover, we use the KDE and SKDE estimates to construct numerical methods to calculate the minimisers of~\eqref{eq:Intro:conpDirichlet} with $\rho_n = \rho_{n,h}$ and $\rho_n = \rho_{n,h,\lambda,T}$, and provide examples showing rates of convergence and computational efficiency.

\subsection{Large Data Asymptotics for Density Estimation} \label{subsec:MainResults:DensResults}

We first discuss the almost sure locally uniform convergence of the kernel density estimate.
This result follows almost immediately from known results in the literature.
In particular, it is known from~\cite{GINE2002} that the $L^\infty$ norm between the KDE and its expected value converges with a certain rate to 0.
From here it is not difficult to show that the KDE converges locally uniformly to its true value.

The conditions we use to prove the convergence of the KDE estimate are the following.

\begin{enumerate}[label=(B{{\arabic*}})]
	\item\label{AssumB1} $\Omega\subset\bbR^d$ is an open and bounded domain with Lipschitz boundary, with associated probability measure $\mu\in\cP(\Omega)$. 
	\item\label{AssumB2} The probability measure $\mu$ has a bounded density $\rho$.
	\item\label{AssumB3} For $i=1,\dots$, the points $\bm{x}_i\in \Omega$ are i.i.d. samples of $\mu$.
	\item\label{AssumB4} $K:\bbR^d\to \bbR$ satisfies $\int_{\bbR^d} K(\bm{x}) \, \dd \bm{x} = 1$, has compact support in $B(0,M)$ for some $M>0$ and can be written $K=\phi\circ \xi$ where $\phi$ is a bounded function of bounded variation and $\xi$ is a polynomial. 
	\item\label{AssumB5} $h=h_n$ satisfies
\[	h_n \to 0^+, \quad \frac{nh^d_n}{|\log(h_n)|} \to \infty, \quad \frac{|\log(h_n)|}{\log\log(n)} \to \infty,\quad \text{and} \quad h_n \leq c h_{2n} \]
for some $c>0$.
	\item\label{AssumB6} $K$ satisfies the integrability condition $\int_{\bbR^d} |K(\bm{x})| \|\bm{x}\| \, \dd \bm{x}<+\infty$.
	\item\label{AssumB7} $\rho$ is continuous on $\Omega$.
\end{enumerate}

Assumptions~\ref{AssumB1}-\ref{AssumB5} are the same as those used in~\cite{GINE2002} to prove the almost sure convergence of the bias:
\[ \Bias(\rho_{n,h}) = \sup_{\bm{x}\in\Omega} |\bbE [\rho_{n,h}(x)] - \rho_{n,h}(x)| \]
to zero.
We use Assumptions~\ref{AssumB6}-\ref{AssumB7} to show that $\bbE [\rho_{n,h}] \to \rho$ locally uniformly with probability one.

In fact the assumption that $K= \phi\circ \xi$ where $\phi$ is bounded and of bounded variation and $\xi$ is a polynomial can be relaxed.
Our result is a simple application of~\cite[Theorem 2.3]{GINE2002} which uses a class of kernels that satisfy a technical condition that is sufficient to bound the Vapnik-\v{C}ervonenkis (VC) dimension of functions of the form $K\l\frac{\bm{x}-\cdot}{h}\r$ for $\bm{x}\in\bbR^d$ and $h>0$.
As the authors remark the technical assumption is satisfied for functions of the form $K= \phi\circ \xi$ and since this includes the kernels we are interested in, e.g. Gaussian kernels, we satisfy ourselves with this less general case that can be stated more easily.

We state the converge result for the KDE here, the proof is given in Section~\ref{subsec:DensityEstimates:KDE}.

\begin{theorem}
\label{thm:MainResults:DensResults:KDE}
Assume $\Omega,\mu,\rho,K,h_n$ and $\{\bm{x}\}_{i=1}^n$ satisfy Assumptions~\ref{AssumB1}-\ref{AssumB7}.
Define $\rho_{n,h}$ as in Definition~\ref{def:MainResults:KDE}.
Then, with probability one, for all $\Omega^\prime\subset\subset \Omega$ we have
\[ \lim_{n\to \infty} \| \rho_{n,h_n} - \rho \|_{L^\infty(\Omega^\prime)} \to 0. \]
\end{theorem}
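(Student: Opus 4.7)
The plan is to apply the classical bias--variance decomposition
\begin{equation*}
\|\rho_{n,h_n} - \rho\|_{L^\infty(\Omega')} \leq \|\rho_{n,h_n} - \bbE[\rho_{n,h_n}]\|_{L^\infty(\Omega')} + \|\bbE[\rho_{n,h_n}] - \rho\|_{L^\infty(\Omega')}
\end{equation*}
and to show that each term vanishes as $n\to\infty$, the first almost surely and the second deterministically.

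For the stochastic term I would invoke \cite[Theorem~2.3]{GINE2002} essentially as a black box. The structural condition $K=\phi\circ\xi$ in~\ref{AssumB4} furnishes the VC-type control on the class $\{K((\bm{x}-\cdot)/h): \bm{x}\in\bbR^d,\, h>0\}$ required by their argument, while Assumptions~\ref{AssumB1}--\ref{AssumB3} and the bandwidth conditions in~\ref{AssumB5} match their hypotheses. Their conclusion gives, with probability one,
\begin{equation*}
\sqrt{\tfrac{n h_n^d}{|\log h_n|}}\,\|\rho_{n,h_n} - \bbE[\rho_{n,h_n}]\|_{L^\infty(\bbR^d)} = O(1),
\end{equation*}
and since the prefactor tends to $+\infty$ by~\ref{AssumB5}, the uniform norm itself tends to zero almost surely, uniformly on all of $\bbR^d$ and so in particular on $\Omega'$. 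This is precisely the statement that the quantity $\Bias(\rho_{n,h_n})$ defined in the text goes to zero almost surely.

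For the deterministic bias term, a change of variables $\bm{y} = \bm{x} - h_n \bm{z}$ together with $\int K = 1$ gives
\begin{equation*}
\bbE[\rho_{n,h_n}](\bm{x}) - \rho(\bm{x}) = \int_{\bbR^d} K(\bm{z})\bigl[\rho(\bm{x} - h_n\bm{z}) - \rho(\bm{x})\bigr]\diff \bm{z}.
\end{equation*}
Because $K$ is compactly supported in $B(0,M)$ by~\ref{AssumB4}, only $\|\bm{z}\|\leq M$ contributes. Since $\Omega'\subset\subset\Omega$, there is a compact set $K_0\subset\subset\Omega$ such that $\bm{x} - h_n\bm{z}\in K_0$ for all $\bm{x}\in\Omega'$, $\|\bm{z}\|\leq M$, and all $n$ large enough. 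By~\ref{AssumB7}, $\rho$ is uniformly continuous on $K_0$, so
\begin{equation*}
\sup_{\bm{x}\in\Omega'}\bigl|\bbE[\rho_{n,h_n}](\bm{x}) - \rho(\bm{x})\bigr| \leq \|K\|_{L^1(\bbR^d)}\cdot \omega_\rho(h_n M),
\end{equation*}
where $\omega_\rho$ is the modulus of continuity of $\rho$ on $K_0$, and the right-hand side vanishes as $n\to\infty$.

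The main obstacle in the proof is the stochastic term, but since we may cite~\cite{GINE2002} directly the real work reduces to bookkeeping: checking that the Gin\'e--Guillou hypotheses match the present setup. The deterministic bias estimate and the combination of the two bounds are then routine, and the theorem follows.
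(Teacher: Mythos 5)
Your proposal is correct and follows essentially the same route as the paper: the same bias--variance decomposition, the same appeal to \cite[Theorem~2.3]{GINE2002} for the stochastic term, and the same uniform-continuity argument on a slightly enlarged compact subset of $\Omega$ for the deterministic bias. No gaps.
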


We now turn our attention to the spline kernel density estimate.
To prove convergence we will need some additional assumptions which we state now.

\begin{enumerate}[label=(B{{\arabic*}})]\setcounter{enumi}{7}
	\item\label{AssumB8} The number of derivatives penalised is greater than half the dimension of the data, $m>d/2$.
	\item\label{AssumB9} Let $\dH(T) = \dH(\{\bm{t}_i\}_{i=1}^T,\Omega)$, where $\dH$ is the Hausdorff distance, and
\[ \Sep(T) = \min\lb |\bm{t}_i-\bm{t}_j| \,:\, i\neq j, i,j \in \{1,\dots, T\} \rb \]
then $T_n$ and $h_n$ satisfy
\[ \dH(T_n) \to 0 \qquad \text{and} \qquad \dH(T_n) = O(\Sep(T_n)) \]
	\item\label{AssumB10} $\lambda_n\to 0^+$ satisfies
\[ \lambda_n T_n \dH(T_n)^d = O(1), \quad T_n \lambda_n^{\frac{2m+d}{2m}} \gg n^\theta \quad \text{and} \quad T_n \lambda_n^{\frac{d}{2m}} \geq 1 \]
for some $\theta>0$.
	\item\label{AssumB11} $\Omega$ satisfies the uniform cone condition: i.e. there exists $r>0$ and $\tau>0$ such that for any $\bm{t}\in\Omega$, there exists a unit vector $\bm{\xi}(\bm{t})\in\bbR^d$ such that the cone
\[ C(\bm{t},\bm{\xi}(t),\tau,r) = \{\bm{t} + \lambda\bm{\eta}\,:\,|\bm{\eta}|=1, \bm{\eta}\cdot\bm{\xi}(\bm{t})\ge\cos(\tau),0\le\lambda\le r\} \]
is fully contained in $\Omega$.
	\item\label{AssumB12} $T_n$ and $h_n$ satisfy $\Sep(T_n) \geq 2Mh_n$ where $M$ is given in Assumption~\ref{AssumB4}.
	\item\label{AssumB13} $\rho$ is Lipschitz continuous on $\Omega$ and $\rho\in H^m(\Omega)$.
\end{enumerate}

These conditions are necessary to apply the spline smoothing results of~\cite{UTRERAS1988} and~\cite{arcangeli93}; in particular Assumptions~\ref{AssumB1}, \ref{AssumB8}-\ref{AssumB11} are needed for the $H^k$ convergence of splines in~\cite{arcangeli93} and Assumptions~\ref{AssumB12}-\ref{AssumB13} are used to match our setting here to their setting.
We note that if we distribute the knot points uniformly over $\Omega$ then $\dH(T_n)\sim \frac{1}{T_n^{\frac{1}{d}}}$ and $\Sep(T_n)\sim\frac{1}{T_n^{\frac{1}{d}}}$ which satisfy Assumption~\ref{AssumB9}.
Assumption~\ref{AssumB10} is satisfied for any $\lambda_n\to 0^+$ with $\lambda_nT_n^{\frac{2m}{d+2m}} \gg n^\theta$ for some $\theta>0$.
The result we are interested in is stated below, the proof is a simple corollary of Theorem~\ref{thm:DensityEstimates:SKDE:HmConv} in Section~\ref{subsec:DensityEstimates:SKDE} which gives convergence in $H^k_\loc(\Omega)$.

\begin{theorem}
\label{thm:MainResults:DensResults:SKDE}
Assume $\Omega,\mu,\rho,K,h_n,T_n,\lambda_n,m,\{\bm{t}_i\}_{i=1}^{T_n}$ and $\{\bm{x}\}_{i=1}^n$ satisfy Assumptions~\ref{AssumB1}-\ref{AssumB13}.
We define $\rho_{n,h_n,\lambda_n,T_n}$ as in Definition~\ref{def:MainResults:SKDE}.   
Then, with probability one, for all $\Omega^\prime\subset\subset \Omega$ we have
\[ \|\rho_{n,h_n,\lambda_n,T_n} - \rho\|_{L^\infty(\Omega^\prime)} \to 0. \]
\end{theorem}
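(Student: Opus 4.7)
The plan is to deduce this theorem directly from the stronger $H^m_{\loc}$-convergence result of Theorem~\ref{thm:DensityEstimates:SKDE:HmConv} combined with a Sobolev embedding, since the excerpt explicitly advertises the statement as a corollary of the former. So the proof should be short; essentially all of the analytic heavy lifting has been hidden in the $H^m_{\loc}$ theorem.

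First I would fix an arbitrary $\Omega^\prime \subset\subset \Omega$ and interpose a domain $\Omega^{\prime\prime}$ with Lipschitz boundary so that $\Omega^\prime \subset\subset \Omega^{\prime\prime} \subset\subset \Omega$. Such an intermediate domain always exists (for instance, a mollified $\delta$-thickening of $\Omega^\prime$ for sufficiently small $\delta$), and its Lipschitz regularity is what will allow us to apply Sobolev embedding on it. By Theorem~\ref{thm:DensityEstimates:SKDE:HmConv}, applied on $\Omega^{\prime\prime}$ under Assumptions~\ref{AssumB1}--\ref{AssumB13}, we obtain
\[
\|\rho_{n,h_n,\lambda_n,T_n}-\rho\|_{H^m(\Omega^{\prime\prime})} \longrightarrow 0 \qquad \text{almost surely.}
\]

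The concluding step is the Sobolev embedding. Because $m>d/2$ by Assumption~\ref{AssumB8} and $\Omega^{\prime\prime}$ has Lipschitz boundary, Morrey's inequality yields the continuous embedding $H^m(\Omega^{\prime\prime}) \hookrightarrow C^{0,\alpha}(\overline{\Omega^{\prime\prime}}) \hookrightarrow L^\infty(\Omega^{\prime\prime})$ for some $\alpha>0$, with constant $C=C(\Omega^{\prime\prime},m,d)$. Therefore
\[
\|\rho_{n,h_n,\lambda_n,T_n}-\rho\|_{L^\infty(\Omega^\prime)} \leq \|\rho_{n,h_n,\lambda_n,T_n}-\rho\|_{L^\infty(\Omega^{\prime\prime})} \leq C\|\rho_{n,h_n,\lambda_n,T_n}-\rho\|_{H^m(\Omega^{\prime\prime})} \longrightarrow 0
\]
almost surely, and since $\Omega^\prime \subset\subset \Omega$ was arbitrary, this is precisely the claimed locally uniform convergence with probability one.

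The only genuinely delicate step in the whole argument is therefore Theorem~\ref{thm:DensityEstimates:SKDE:HmConv} itself, which is where one must (i) split the error as $\rho_{n,h_n,\lambda_n,T_n}-\rho = [S_{\lambda_n,T_n}(P_{T_n}\rho)-\rho] + S_{\lambda_n,T_n}(P_{T_n}(\rho_{n,h_n}-\rho))$, (ii) control the deterministic approximation term using the spline convergence results of~\cite{arcangeli93} (this is where Assumptions~\ref{AssumB9}, \ref{AssumB10}, \ref{AssumB11}, \ref{AssumB13} are genuinely used), and (iii) control the stochastic term by combining the stability of the spline smoothing operator with Theorem~\ref{thm:MainResults:DensResults:KDE}, using Assumption~\ref{AssumB12} to guarantee that at each knot point only a single kernel contributes. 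At the level of the present corollary, however, nothing beyond Sobolev embedding is required, which is why I expect no obstacle here.
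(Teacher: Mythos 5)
Your proposal is correct and follows exactly the paper's route: the paper likewise deduces the $L^\infty_{\loc}$ statement from the $H^m_{\loc}$ convergence of Theorem~\ref{thm:DensityEstimates:SKDE:HmConv} via Sobolev embedding (Morrey's inequality), using $m>d/2$. Your interposition of a Lipschitz domain $\Omega^{\prime\prime}$ is a harmless (and slightly more careful) way of justifying the embedding step that the paper states in one line.
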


\section{Convergence of Minimisers} \label{sec:ConvMini}

To show that minimisers of $\cEpinftyCon(\cdot;\rho_n)$ and $\cFpnCon(\cdot;\rho_n)$, where $\rho_n\to\rho$, converge to minimisers of $\cEpinftyCon(\cdot;\rho_n)$ we require a notion of convergence for functionals.
For variational convergence the correct notion is $\Gamma$-convergence that we recall now.

\begin{Definition}
Let $(\cX,d)$ be a metric space.
Let $F_n:\cX\to\bbR$ for each $n\in \bbN$.
We say that $(F_n)_{n\in\bbN}$ {\em $\Gamma$-converges} to $F:\cX\to\bbR$ and write $\Glim_{n\to+\infty} F_n=F$ if
\begin{enumerate}
\item (liminf inequality) for every $x\in\cX$ and every $(x_n)_{n\in\bbN}$ such that $x_n\to x$ in $\cX$,
\[ F(x) \leq \liminf_{n\to+\infty} F_n(x_n); \]
\item (existence of recovery sequences) for every $x\in\cX$, there exists some sequence $(x_n)_{n\in\bbN}$ such that $x_n\to x$ in $\cX$ and
\[ F(x)\geq \limsup_{n\to+\infty} F_n(x_n). \]
\end{enumerate}
\end{Definition}

The following result then provides conditions for convergence of minimisers, the proof can be found in, for example,~\cite{Braides2002,dalmaso93}.

\begin{theorem}
\label{thm:ConvMini:minimizers}
Let $(\cX,d)$ be a metric space and $F_n:\cX\to [0,+\infty]$ be a sequence of functionals.
Let $x_n$ be a minimising sequence for $F_n$.
If the set $\{x_n\}_{n=1}^\infty$ is precompact and $F_\infty = \Glim_{n\to+\infty} F_n$ where $F_\infty:\cX\to [0,+\infty]$ is not identically $+\infty$ then
\[ \min_\cX F_\infty = \lim_{n\to+\infty} \inf_\cX F_n. \]
Furthermore any cluster point of $\{x_n\}_{n=1}^\infty$ is a minimiser of $F_\infty$.
\end{theorem}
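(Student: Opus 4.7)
The plan is to apply the standard two-sided sandwich argument underpinning the fundamental theorem of $\Gamma$-convergence: precompactness extracts a convergent subsequence, the liminf inequality bounds $F_\infty$ at its limit point from above by $\liminf \inf F_n$, and the recovery sequence bounds $\limsup \inf F_n$ by $F_\infty$ evaluated at arbitrary competitors. Collapsing this chain of inequalities simultaneously proves convergence of the infima, identifies the cluster point as a minimiser, and upgrades $\inf$ to $\min$.

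The first step is to pass to a subsequence $x_{n_k}\to x_\infty$ in $(\cX,d)$, which exists by precompactness of $\{x_n\}_{n=1}^\infty$. The liminf inequality of $\Gamma$-convergence yields $F_\infty(x_\infty)\leq \liminf_k F_{n_k}(x_{n_k})$, and since $x_n$ is a minimising sequence (interpreted as $F_n(x_n)-\inf_\cX F_n\to 0$), the right-hand side equals $\liminf_k \inf_\cX F_{n_k}$. For the matching upper bound, I would test against an arbitrary $x\in\cX$ with $F_\infty(x)<+\infty$ (such $x$ exists because $F_\infty$ is not identically $+\infty$) and use the recovery-sequence property to produce $y_n\to x$ with $\limsup_n F_n(y_n)\leq F_\infty(x)$; since $\inf_\cX F_n\leq F_n(y_n)$, taking $\limsup$ in $n$ and then the infimum over $x$ gives $\limsup_n \inf_\cX F_n \leq \inf_\cX F_\infty$.

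Chaining the two estimates produces
\[ \inf_\cX F_\infty \leq F_\infty(x_\infty) \leq \liminf_{k} \inf_\cX F_{n_k} \leq \limsup_{n} \inf_\cX F_n \leq \inf_\cX F_\infty, \]
which forces equality throughout. This collapses to $\lim_n \inf_\cX F_n = F_\infty(x_\infty) = \inf_\cX F_\infty$, so $x_\infty$ attains the infimum (hence $\inf = \min$), and the full limit (not merely $\liminf$) of $\inf_\cX F_n$ exists. To promote the conclusion to \emph{every} cluster point rather than just the one subsequence we started with, I would rerun the argument: for any cluster point $x_\ast$ there is a subsequence $x_{n_k}\to x_\ast$, and the liminf inequality combined with the now-established identity $\lim_n \inf_\cX F_n = \min_\cX F_\infty$ forces $F_\infty(x_\ast)\leq \min_\cX F_\infty$, so $x_\ast$ is a minimiser.

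No serious obstacle is anticipated, since the result is essentially formal once the two halves of the $\Gamma$-convergence definition are paired with precompactness. The points that require attention are reading ``minimising sequence'' as asymptotically minimising (so that $F_n(x_n)$ and $\inf_\cX F_n$ become interchangeable in the limit), ensuring that the sandwich actually forces convergence of the full sequence of infima rather than merely a subsequential bound, and the final quantification over \emph{all} cluster points via rerunning the liminf half. The hypothesis that $F_\infty\not\equiv +\infty$ is used exactly once, to guarantee a nontrivial competitor $x$ in the recovery-sequence step; without it, the chain would degenerate.
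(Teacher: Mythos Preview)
Your proposal is correct and is precisely the classical proof of the fundamental theorem of $\Gamma$-convergence. The paper does not give its own proof of this statement; it simply cites the standard references \cite{Braides2002,dalmaso93}, and the argument you outline is exactly the one found there. The only minor point to tighten (which you already flag) is that to obtain the \emph{full} limit $\lim_n \inf_\cX F_n$ rather than a subsequential identity, you should either start by extracting the subsequence along which $\inf_\cX F_{n_k}$ realises $\liminf_n \inf_\cX F_n$, or rerun the liminf half along such a subsequence; otherwise the chain only pins down $\limsup_n \inf_\cX F_n$.
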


The following lemma will be useful when considering our cases of $\Gamma$-convergence, and compactness of minimisers.

\begin{lemma}[Morrey's Theorem]
	\label{lemma:ConvMini:RK}
	Let $\Omega\subset\mathbb{R}^d$ be an extension domain for $W^{1,p}(\Omega)$ with finite measure ({\em i.e.} there exist a bounded linear operator $E:W^{1,p}(\Omega)\to W^{1,p}(\mathbb{R}^d)$ such that $Ef|_{\Omega}=f$ on $\Omega$ for every $f\in W^{1,p}(\Omega)$).
	Let $\{f_n\}_{n\in\bbN}\subset W^{1,p}(\Omega)$ be a uniformly bounded sequence.
	Then, if $p>d$ there exist a subsequence $\{f_{n_k}\}_{k\in\bbN}$ of $\{f_n\}_{n\in\bbN}$ and a function $f\in C^{0,\alpha}(\Omega)$ such that $f_{n_k}\to f$ as $k\rightarrow\infty$ in $C^{0,\alpha}(\Omega)$, for any $0<\alpha<1-\frac{d}{p}$.
\end{lemma}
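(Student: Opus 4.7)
The statement is a classical consequence of Morrey's embedding combined with Arzelà--Ascoli and a Hölder interpolation trick. My plan has four steps.

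\textbf{Step 1: uniform Hölder bound via Morrey's embedding on $\mathbb{R}^d$.} Since $\Omega$ is an extension domain for $W^{1,p}$, there is a bounded linear operator $E : W^{1,p}(\Omega) \to W^{1,p}(\mathbb{R}^d)$. Because $p > d$, the classical Morrey embedding for the whole space gives $W^{1,p}(\mathbb{R}^d) \hookrightarrow C^{0,\beta}(\mathbb{R}^d)$ with $\beta := 1 - d/p$. Composing with the extension and restricting to $\Omega$ yields
\[
  \|f_n\|_{C^{0,\beta}(\Omega)} \;\leq\; C\,\|Ef_n\|_{W^{1,p}(\mathbb{R}^d)} \;\leq\; C\,\|E\|\,\|f_n\|_{W^{1,p}(\Omega)} \;\leq\; C',
\]
uniformly in $n$, where the last inequality uses the uniform $W^{1,p}$-bound on the sequence.

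\textbf{Step 2: extract a uniform limit.} The uniform $C^{0,\beta}$-bound of Step 1 gives uniform boundedness and equicontinuity of $\{f_n\}$ on $\overline{\Omega}$. Since $\Omega$ is bounded (this is the regime in which Morrey's theorem is invoked in the paper; finite-measure extension domains of this type are bounded with compact closure), Arzelà--Ascoli produces a subsequence $f_{n_k}$ converging uniformly on $\overline{\Omega}$ to some $f \in C(\overline{\Omega})$. Passing to the limit in the Hölder inequality pointwise,
\[
  |f(x) - f(y)| \;=\; \lim_{k\to\infty}|f_{n_k}(x)-f_{n_k}(y)| \;\leq\; C'|x-y|^\beta,
\]
shows $f \in C^{0,\beta}(\Omega)$.

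\textbf{Step 3: upgrade uniform convergence to $C^{0,\alpha}$ convergence for any $\alpha < \beta$.} Set $g_k := f_{n_k} - f$, so $\|g_k\|_\infty \to 0$ while $[g_k]_{C^{0,\beta}} \leq 2C'$ uniformly. For $\alpha \in (0,\beta)$ and any threshold $\delta > 0$, split the Hölder seminorm according to whether $|x-y| \leq \delta$ or $|x-y| > \delta$:
\[
  \frac{|g_k(x)-g_k(y)|}{|x-y|^\alpha} \;\leq\; \begin{cases} [g_k]_{C^{0,\beta}}\,|x-y|^{\beta-\alpha} \leq 2C'\delta^{\beta-\alpha}, & |x-y| \leq \delta,\\[2pt] 2\|g_k\|_\infty\,\delta^{-\alpha}, & |x-y| > \delta.\end{cases}
\]
Choosing $\delta = \delta_k = \|g_k\|_\infty^{1/\beta}$ balances the two terms and both tend to $0$, giving $[g_k]_{C^{0,\alpha}} \to 0$. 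Combined with $\|g_k\|_\infty \to 0$, this yields $f_{n_k} \to f$ in $C^{0,\alpha}(\Omega)$ for every $\alpha < 1 - d/p$, as claimed.

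\textbf{Main obstacle.} Steps 1--2 are essentially the textbook Morrey plus Arzelà--Ascoli argument; the only delicate point is Step 3, since Arzelà--Ascoli by itself gives uniform, not Hölder, convergence. The interpolation trick above --- trading the uniform small norm against the uniform $C^{0,\beta}$ bound at the right scale --- is what produces convergence in the strictly weaker Hölder norm $C^{0,\alpha}$ for any $\alpha < \beta$. This is also exactly why one cannot reach the endpoint $\alpha = 1 - d/p$: at that exponent only boundedness, not convergence, of the seminorm is preserved.
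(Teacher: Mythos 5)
Your proof is correct and is precisely the standard textbook argument (extension, Morrey's inequality on $\mathbb{R}^d$, Arzel\`a--Ascoli, then the H\"older-norm interpolation $[g]_{C^{0,\alpha}}\le C\,[g]_{C^{0,\beta}}^{\alpha/\beta}\|g\|_\infty^{1-\alpha/\beta}$ to upgrade uniform convergence to $C^{0,\alpha}$ convergence for every $\alpha<\beta=1-d/p$); the paper does not prove this lemma at all, citing it as classical, so there is nothing to diverge from. The only point worth flagging is your parenthetical claim that a finite-measure $W^{1,p}$-extension domain is bounded --- this is true (extension domains satisfy a measure-density condition, which together with finite measure forces boundedness) but is not obvious, and in the paper's applications $\Omega$ is bounded by Assumption (A1) anyway.
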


\begin{remark}
	We note that as $C^{0,\alpha}(\Omega)\subset L^{\infty}(\Omega)$, compactness in $L^\infty(\Omega)$ follows.
\end{remark}

To show compactness of minimisers, we require a slightly modified Poincar\'{e} inequality.

\begin{lemma}[Poincar\'{e} Inequality]
	\label{lem:ConvMini:Poincare}
	Let $p>d$ and $\Omega\subset\bbR^d$ be a connected extension domain for $W^{1,p}(\Omega)$ with finite measure.
	For $f\in W^{1,p}(\Omega)$, let
	\[ \bar{f} = \frac{1}{N}\sum_{i=1}^N f(\bm{x}_i) \]
	for a fixed set $\{\bm{x}_i\}_{i=1}^N\subset\Omega$.
	Then, there exists a constant $C$ such that for all $f\in W^{1,p}(\Omega)$, 
	\[  \|f-\bar{f}\|_{L^p(\Omega)} \leq C\|\nabla f\|_{L^p(\Omega)} \]
\end{lemma}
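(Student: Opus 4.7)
The plan is to prove this by contradiction, exploiting the compactness provided by Morrey's theorem (Lemma~\ref{lemma:ConvMini:RK}) together with the connectedness of $\Omega$. This is a standard strategy for Poincar\'e-type inequalities, where the main twist here is that the ``mean'' is taken over a finite set of fixed points rather than over $\Omega$; the crucial observation is that since $p>d$, pointwise evaluation is a bounded linear functional on $W^{1,p}(\Omega)$ via Sobolev embedding, so averaging over $\{\bm{x}_i\}_{i=1}^N$ is continuous with respect to uniform (and even $C^{0,\alpha}$) convergence.

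Concretely, suppose the inequality fails. Then for each $n$ there exists $f_n\in W^{1,p}(\Omega)$ with
\[ \|f_n - \bar{f}_n\|_{L^p(\Omega)} > n\,\|\nabla f_n\|_{L^p(\Omega)}. \]
Set $g_n := (f_n - \bar{f}_n)/\|f_n - \bar{f}_n\|_{L^p(\Omega)}$. Then $g_n\in W^{1,p}(\Omega)$, $\|g_n\|_{L^p(\Omega)} = 1$, $\bar{g}_n = \frac{1}{N}\sum_{i=1}^N g_n(\bm{x}_i) = 0$, and $\|\nabla g_n\|_{L^p(\Omega)} < 1/n \to 0$. In particular $\{g_n\}$ is uniformly bounded in $W^{1,p}(\Omega)$.

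Since $p>d$ and $\Omega$ is an extension domain with finite measure, Lemma~\ref{lemma:ConvMini:RK} yields a subsequence (not relabelled) and some $g\in C^{0,\alpha}(\Omega)$ with $g_n\to g$ in $C^{0,\alpha}(\Omega)$, and hence uniformly on $\Omega$ and in $L^p(\Omega)$. The bound $\|\nabla g_n\|_{L^p(\Omega)}\to 0$, combined with weak lower semicontinuity of $\|\nabla\cdot\|_{L^p(\Omega)}$ along a weakly convergent subsequence in $W^{1,p}(\Omega)$, forces $\nabla g = 0$ a.e., so since $\Omega$ is connected, $g$ is a constant $c\in\bbR$.

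Now I use the two normalisations to derive a contradiction. Uniform convergence gives $g_n(\bm{x}_i)\to c$ for each $i$, so $0 = \bar{g}_n \to c$, hence $c=0$ and $g\equiv 0$. But $L^p$ convergence and $\|g_n\|_{L^p(\Omega)}=1$ give $\|g\|_{L^p(\Omega)} = 1$, contradicting $g\equiv 0$. Therefore the Poincar\'e-type inequality must hold for some finite constant $C$.

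The only point that requires any care is the claim that $\bar{g}_n \to \bar g$, i.e.\ that pointwise evaluation at each $\bm{x}_i$ passes to the limit. This is immediate from $C^{0,\alpha}$ (indeed $L^\infty$) convergence, which is exactly what Morrey's theorem provides under $p>d$; this is also why the hypothesis $p>d$ is essential and why the lemma would fail for $p\le d$, since pointwise evaluation is not a well-defined continuous functional in that range.
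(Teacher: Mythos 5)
Your proposal is correct and follows essentially the same route as the paper: a contradiction argument, normalising $v_n=(f_n-\bar f_n)/\|f_n-\bar f_n\|_{L^p}$, invoking Morrey's theorem (Lemma~\ref{lemma:ConvMini:RK}) for compactness, concluding the limit is constant and then zero via $\bar v=0$, contradicting $\|v\|_{L^p}=1$. The only cosmetic difference is that the paper establishes $\nabla v=0$ by testing against smooth compactly supported functions, while you use weak lower semicontinuity of the gradient norm; both are valid.
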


\begin{proof}
The proof is very similar to the proof of the Poincar\'{e} inequality found in \cite[Theorem 12.23]{Leoni2009}, we just check that one can take the average value of $f$ over finitely many points.
Assume for a contradiction that there exists a sequence $f_n\in W^{1,p}(\Omega)$ such that
\[ \|f_n-\bar{f}_n\|_{L^p(\Omega)} \geq n\|\nabla f_n\|_{L^p(\Omega)}>0. \]
Define a centralised, normalised sequence
\[ v_n = \frac{f_n-\bar{f}_n}{\|f_n-\bar{f}_n\|_{L^p(\Omega)}}, \]
then $v_n\in W^{1,p}(\Omega)$, $\|v_n\|_{L^p(\Omega)}=1, \bar{v}_n=0$, and
\[ \|\nabla v_n\|_{L^p(\Omega)} = \left\|\nabla\left( \frac{f_n-\bar{f}_n}{\|f_n-\bar{f}_n\|_{L^p(\Omega)}}\right)\right\|_{L^p(\Omega)} = \frac{\|\nabla f_n\|_{L^p(\Omega)}}{\|f_n-\bar{f}_n\|_{L^p(\Omega)}} \leq \frac{1}{n}. \]
Thus by Lemma~\ref{lemma:ConvMini:RK}, there exists a subsequence $\{v_{n_k}\}_{n\in\bbN}$ such that $v_{n_k}\to v$ in $L^{\infty}(\Omega)$.
Further, we must have $\|v\|_{L^p(\Omega)}=1$ and $\bar{v} = 0$.

Now consider a differentiable, compactly supported function $\varphi:\Omega\to\bbR$. Then for each derivative of $\varphi$ (using the Lebesgue dominated convergence theorem, integration by parts and H\"older's inequality)
\begin{align*}
\la \int_\Omega v\frac{\partial\varphi}{\partial x_i}\diff x \ra	& = \lim_{k\to\infty} \la \int_\Omega v_{n_k}\frac{\partial\varphi}{\partial x_i}\diff x \ra \\
 & = \lim_{k\to\infty} \la \int_\Omega \varphi\frac{\partial v_{n_k}}{\partial x_i}\diff x \ra \\
 & \leq \|\varphi\|_{L^{p^\prime}(\Omega)} \lda \frac{\partial v_{n_k}}{\partial x_i}\rda_{L^p(\Omega)} = 0.
\end{align*}
Then $\|\nabla v\|_{L^p(\Omega)}=0$ so $v$ is constant, and as $\bar{v}=0$ we must have that $v=0$ which contradicts $\|v\|_{L^p(\Omega)}=1$.
Hence the required result holds.
\end{proof}

\subsection{Convergence of the Local Model}

We now state the compactness property for $\cEpinftyCon(\cdot;\rho_n)$.
Compactness of minimisers is a corollary.

\begin{proposition}
\label{prop:ConvMini:CompactnpLap}
Assume that $\Omega,\mu,\eta,p,\rho_n$ and $\{\bm{x}_i\}_{i=1}^n$ satisfy Assumptions~\ref{Assum1}-\ref{Assum9}.
Then, any sequence $\{f_n\}_{n\in\bbN}$ satisfying $\sup_{n\in\bbN} \cEpinftyCon(f_n;\rho_n)<+\infty$ is bounded in $W^{1,p}(\Omega^\prime)$ and precompact in $C^{0,\alpha}(\Omega^\prime)$ for any $\Omega^\prime\subset\subset \Omega$ and any $0<\alpha<1-\frac{d}{p}$.
\end{proposition}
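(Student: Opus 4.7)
My plan is to reduce the problem to a standard application of the Poincar\'{e} inequality (Lemma~\ref{lem:ConvMini:Poincare}) and Morrey's embedding (Lemma~\ref{lemma:ConvMini:RK}), with Assumption~\ref{Assum9} used to convert the weighted gradient bound coming from $\cEpinftyCon$ into an unweighted one.

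First I would fix $\Omega'\subset\subset\Omega$ and choose an auxiliary open, connected set $\Omega''$ with Lipschitz boundary such that $\Omega'\cup\{\bm{x}_1,\dots,\bm{x}_N\}\subset\Omega''\subset\subset\Omega$. Such a set exists because $\Omega$ is open, bounded, connected and contains the labelled points in its interior (so $\Omega'$ together with small balls around each $\bm{x}_i$, joined by tubes inside $\Omega$, works). By Assumption~\ref{Assum3} we have $\rho\geq \rho_{\min}>0$ on $\Omega$, and by Assumption~\ref{Assum9} we have $\rho_n\to\rho$ uniformly on $\Omega''$, so there exists $n_0$ such that $\rho_n(\bm{x})\geq \rho_{\min}/2$ for all $\bm{x}\in\Omega''$ and $n\geq n_0$. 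Hence $\rho_n^2\geq c_0:=\rho_{\min}^2/4$ on $\Omega''$.

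Now let $C_\star:=\sup_{n\in\bbN}\cEpinftyCon(f_n;\rho_n)<+\infty$. Since the energies are finite, each $f_n\in W^{1,p}(\Omega)$ with $f_n(\bm{x}_i)=y_i$, and
\[
	\sigma_\eta \int_{\Omega''} |\nabla f_n|^p\diff \bm{x} \leq \frac{\sigma_\eta}{c_0}\int_{\Omega''} |\nabla f_n|^p\rho_n^2\diff \bm{x} \leq \frac{1}{c_0}\cEpinftyCon(f_n;\rho_n)\leq \frac{C_\star}{c_0}
\]
for all $n\geq n_0$. Applying Lemma~\ref{lem:ConvMini:Poincare} on the connected Lipschitz domain $\Omega''$ with averaging nodes $\{\bm{x}_i\}_{i=1}^N\subset\Omega''$ (so that $\bar{f}_n=\frac{1}{N}\sum_{i=1}^N y_i$ is the \emph{same constant} $\bar{y}$ for every $n$) yields
\[
	\|f_n-\bar{y}\|_{L^p(\Omega'')} \leq C\|\nabla f_n\|_{L^p(\Omega'')}\leq C',
\]
and the triangle inequality plus $|\Omega''|<\infty$ gives a uniform bound on $\|f_n\|_{W^{1,p}(\Omega'')}$, hence on $\|f_n\|_{W^{1,p}(\Omega')}$.

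Finally, since $p>d$ by Assumption~\ref{Assum8} and $\Omega''$ is an extension domain (being Lipschitz), Lemma~\ref{lemma:ConvMini:RK} yields a subsequence converging in $C^{0,\alpha}(\Omega'')$ for any $0<\alpha<1-d/p$, and in particular in $C^{0,\alpha}(\Omega')$. The main technical obstacle is the first step: one must take care to build $\Omega''$ so that it is simultaneously a connected Lipschitz extension domain, compactly contained in $\Omega$ (so that Assumption~\ref{Assum9} delivers uniform lower bounds on $\rho_n$), \emph{and} contains all labelled points, since otherwise the Poincar\'{e} inequality cannot exploit the pointwise constraints to fix the constant $\bar{f}_n$ independently of $n$.
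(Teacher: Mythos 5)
Your proof is correct and follows essentially the same route as the paper's: a uniform lower bound on $\rho_n$ obtained from Assumptions~\ref{Assum3} and~\ref{Assum9} converts the energy bound into an $L^p$ gradient bound, the Poincar\'{e} inequality of Lemma~\ref{lem:ConvMini:Poincare} with averaging over the labelled points (whose values are fixed to $y_i$) controls the $L^p$ norm, and Lemma~\ref{lemma:ConvMini:RK} then gives precompactness in $C^{0,\alpha}$. Your explicit enlargement to an auxiliary connected Lipschitz domain $\Omega''$ containing both $\Omega'$ and the labelled points is a sensible refinement: the paper applies the Poincar\'{e} inequality directly on $\Omega'$ and implicitly assumes the constrained points lie in it.
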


\begin{proof}
We show that $\{f_n\}_{n\in\bbN}$ is uniformly bounded in $W^{1,p}(\Omega^\prime)$.
Compactness in $C^{0,\alpha}(\Omega^\prime)$ then follows from Lemma~\ref{lemma:ConvMini:RK}.

First, consider $\|\nabla f_n\|_{L^p(\Omega^\prime)}$.
We note for sufficiently large $n$ that $\rho_n$ is strictly positive for a.e. $\bm{x}\in\Omega^\prime$, moreover by \labelcref{Assum3,Assum9}, 
\[	\rho_n(\bm{x})\geq\frac{\rho(\bm{x})}{\sqrt{2}}\geq \frac{\min_{\bm{x}\in\Omega}\rho(\bm{x})}{\sqrt{2}}, \]
for a.e. $\bm{x}\in\Omega^\prime$ and $n$ sufficiently large.
Hence,
\[ \cEpinftyCon(f_n;\rho_n) \geq \cEpinfty(f_n;\rho_n) = \sigma_\eta\int_{\Omega}|\nabla f_n(\bm{x})|^p\rho_n^2(\bm{x})\diff \bm{x} \geq \frac{\sigma_\eta}{2}\min_{\bm{x}\in\Omega}(\rho^2(\bm{x}))\|\nabla f_n\|_{L^p(\Omega^\prime)}^p. \]
Therefore $\sup_{n\in\bbN}\|\nabla f_n\|_{L^p(\Omega^\prime)}<+\infty$.

We are left to show $\sup_{n\in\bbN} \|f_n\|_{L^p(\Omega^\prime)}<+\infty$.
By Minkowski's inequality and Lemma~\ref{lem:ConvMini:Poincare}:
\[ \|f_n\|_{L^p(\Omega^\prime)} \leq \|f_n-\bar{f}_n\|_{L^p(\Omega^\prime)} + |\bar{f}_n| \sqrt[p]{\Vol(\Omega^\prime)} \leq C \l\|\nabla f_n\|_{L^p(\Omega^\prime)} + \frac{1}{N}\la \sum_{i=1}^N y_i \ra \r. \]
Since, by the previous argument, we can bound $\|\nabla f_n\|_{L^p(\Omega^\prime)}$ independently of $n$ we have that $\{f_n\}_{n\in\bbN}$ is bounded in $L^p(\Omega^\prime)$ as required.
\end{proof}

An immediate corollary of the previous result is that minimisers of $\cEpinftyCon(\cdot;\rho_n)$ are bounded.

\begin{corollary}
\label{cor:ConvMini:CompactnpLapMin}
Assume that $\Omega,\mu,\eta,p,\rho_n$ and $\{\bm{x}_i\}_{i=1}^n$ satisfy Assumptions~\ref{Assum1}-\ref{Assum9}.
Then, minimisers of $\cEpinftyCon(\cdot;\rho_n)$ are bounded in $W^{1,p}(\Omega^\prime)$ and precompact in $C^{0,\alpha}(\Omega^\prime)$ for any $\Omega^\prime\subset\subset \Omega$ and any $\alpha\in(0,1-\frac{d}{p})$.
\end{corollary}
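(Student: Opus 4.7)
The plan is to apply Proposition~\ref{prop:ConvMini:CompactnpLap} directly to a sequence of minimisers $\{f_n^\star\}$ of $\cEpinftyCon(\cdot;\rho_n)$. Since that proposition already delivers the conclusion for \emph{any} sequence with $\sup_n \cEpinftyCon(f_n;\rho_n)<+\infty$, the only thing left to establish is a uniform-in-$n$ upper bound on the minimum energy.

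To obtain such a bound, I would exhibit a single, fixed competitor $g\in W^{1,p}(\Omega)$ that is admissible for every $\cEpinftyCon(\cdot;\rho_n)$. Because the labelled set $\{\bm{x}_i\}_{i=1}^N$ is fixed, finite and lies in $\Omega$, there exists $\delta>0$ with $|\bm{x}_i-\bm{x}_j|>3\delta$ for $i\ne j$ and $B(\bm{x}_i,\delta)\subset\Omega$ for each $i$. Choosing smooth bump functions $\psi_i\in C_c^\infty(B(\bm{x}_i,\delta))$ with $\psi_i(\bm{x}_i)=1$ and setting $g=\sum_{i=1}^N y_i\psi_i$ gives $g\in C_c^\infty(\Omega)\subset W^{1,p}(\Omega)$ with $g(\bm{x}_i)=y_i$ for $i=1,\dots,N$ (pointwise evaluation is well-defined because $p>d$ by Assumption~\ref{Assum8}, by Morrey's embedding). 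In particular $\|\nabla g\|_{L^p(\Omega)}<+\infty$, independent of $n$.

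Next, Assumption~\ref{Assum9} furnishes a constant $C_\rho:=\sup_{n\in\bbN}\|\rho_n\|_{L^\infty(\Omega)}<+\infty$, so
\[
\cEpinftyCon(g;\rho_n) \;=\; \sigma_\eta \int_\Omega |\nabla g(\bm{x})|^p \rho_n^2(\bm{x})\diff \bm{x} \;\leq\; \sigma_\eta C_\rho^2 \|\nabla g\|_{L^p(\Omega)}^p \;=:\; M,
\]
with $M<+\infty$ not depending on $n$. Since $f_n^\star$ minimises $\cEpinftyCon(\cdot;\rho_n)$,
\[
\cEpinftyCon(f_n^\star;\rho_n) \;\leq\; \cEpinftyCon(g;\rho_n) \;\leq\; M.
\]

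Applying Proposition~\ref{prop:ConvMini:CompactnpLap} to $\{f_n^\star\}$ with this uniform energy bound gives both the boundedness in $W^{1,p}(\Omega')$ and the precompactness in $C^{0,\alpha}(\Omega')$ for every $\Omega'\subset\subset\Omega$ and every $\alpha\in(0,1-\tfrac{d}{p})$, which is exactly the statement of the corollary. There is no real obstacle here; the only modest check is constructing a fixed competitor with finite Dirichlet energy honouring the $N$ pointwise constraints, which is routine thanks to $p>d$ and the finiteness of the labelled set.
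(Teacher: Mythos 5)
Your proposal is correct and follows essentially the same route as the paper: fix a single admissible competitor with finite $W^{1,p}$ norm satisfying the constraints, use $\sup_n\|\rho_n\|_{L^\infty}<\infty$ from Assumption~\ref{Assum9} to get a uniform bound on the minimal energy, and then invoke Proposition~\ref{prop:ConvMini:CompactnpLap}. The only difference is cosmetic — you construct the interpolant explicitly via bump functions, whereas the paper simply asserts the existence of a smooth interpolant $f^\dagger$.
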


\begin{proof}
Choose any $f^\dagger$ that smoothly interpolates between constraints, i.e. $\|f^\dagger\|_{W^{1,p}(\Omega)}<+\infty$ and $f^\dagger(\bm{x}_i)=y_i$ for all $i=1,\dots, N$.
Let $f_n$ be a minimiser of $\cEpinftyCon(\cdot;\rho_n)$.
Clearly $\cEpinftyCon(f_n;\rho_n) \leq \cEpinftyCon(f^\dagger;\rho_n) \leq \sigma_\eta \|\rho_n\|_{L^\infty(\Omega)}^2 \|\nabla f^\dagger\|_{L^p(\Omega)}^p$.
In particular $\sup_{n\in\bbN} \cEpinfty(f_n;\rho_n)<+\infty$, hence the result follows from Proposition~\ref{prop:ConvMini:CompactnpLap}.
\end{proof}

We now consider $\Gamma$-convergence for $\cEpinftyCon(\cdot;\rho_n)$.

\begin{lemma}
\label{lem:ConvMini:GammanpLap}
Assume that $\Omega,\mu,\eta,p,\rho_n$ and $\{\bm{x}_i\}_{i=1}^n$ satisfy Assumptions~\ref{Assum1}-\ref{Assum9}.
Then, $\cEpinftyCon(\cdot;\rho_n)$ $\Gamma$-converges to $\cEpinftyCon(\cdot;\rho)$.	
\end{lemma}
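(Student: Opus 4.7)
The plan is to verify the liminf inequality and the existence of recovery sequences for $\cEpinftyCon(\cdot;\rho_n)$ $\Gamma$-converging to $\cEpinftyCon(\cdot;\rho)$ with respect to convergence in $L^p_\loc(\Omega)$, which is the topology compatible with the compactness asserted in Proposition~\ref{prop:ConvMini:CompactnpLap}.

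For the liminf, let $f_n \to f$ in $L^p_\loc(\Omega)$ and assume, by passing to a subsequence, that the liminf is finite. Then every $f_n$ lies in $W^{1,p}(\Omega)$ and satisfies the constraints $f_n(\bm{x}_i) = y_i$. The first step is to establish compactness on an arbitrary $\Omega^\prime \subset\subset \Omega$: by Assumptions~\ref{Assum3} and~\ref{Assum9}, for $n$ large we have $\rho_n \geq \tfrac{1}{\sqrt{2}}\min_\Omega \rho$ uniformly on $\Omega^\prime$, so the energy bound gives uniform control of $\|\nabla f_n\|_{L^p(\Omega^\prime)}$; combined with Poincar\'e (Lemma~\ref{lem:ConvMini:Poincare}) and Morrey (Lemma~\ref{lemma:ConvMini:RK}), this yields weak $W^{1,p}(\Omega^\prime)$ and strong $C^{0,\alpha}(\Omega^\prime)$ convergence along a further subsequence to a limit that coincides with $f$ by uniqueness, placing $f$ in $W^{1,p}(\Omega^\prime)$ with $f(\bm{x}_i) = y_i$ preserved by uniform convergence.

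The core of the liminf step is then the following observation: $\rho_n^{2/p} \to \rho^{2/p}$ uniformly on $\Omega^\prime$ and $\nabla f_n \rightharpoonup \nabla f$ weakly in $L^p(\Omega^\prime)$, so $\rho_n^{2/p}\nabla f_n \rightharpoonup \rho^{2/p}\nabla f$ weakly in $L^p(\Omega^\prime;\bbR^d)$, and weak lower semi-continuity of the $L^p$ norm gives
\[
\sigma_\eta\int_{\Omega^\prime}|\nabla f|^p\rho^2 \diff \bm{x} \leq \liminf_n \sigma_\eta\int_{\Omega^\prime}|\nabla f_n|^p\rho_n^2 \diff \bm{x} \leq \liminf_n \cEpinftyCon(f_n;\rho_n).
\]
Exhausting $\Omega$ by an increasing sequence of compact subsets $\Omega^\prime_k \uparrow \Omega$ and invoking monotone convergence on the left-hand side yields the full liminf inequality. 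For the recovery sequence I take the constant sequence $f_n \equiv f$. If $\cEpinftyCon(f;\rho) = +\infty$ there is nothing to prove; otherwise $f \in W^{1,p}(\Omega)$ with $f(\bm{x}_i) = y_i$, so the hard constraint is satisfied by every $f_n$. Assumption~\ref{Assum9} provides a uniform $L^\infty$ bound on $\rho_n$ together with pointwise a.e.\ convergence $\rho_n \to \rho$ on $\Omega$ (local uniform convergence implies pointwise convergence at every interior point), so dominated convergence with dominating function $(\sup_n \|\rho_n\|_{L^\infty}^2)|\nabla f|^p \in L^1(\Omega)$ delivers $\cEpinftyCon(f;\rho_n) \to \cEpinftyCon(f;\rho)$.

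The most delicate point is the liminf inequality: we must pass from weak convergence of $\nabla f_n$ and uniform convergence of $\rho_n$ on compact subsets to a liminf statement for the energy on the full domain $\Omega$. The compactness needed to bound $\nabla f_n$ is only local (since the lower bound on $\rho_n$ is only available on interior compacts, and near $\partial\Omega$ density estimates may degenerate), so we are forced to extract along an exhaustion $\Omega^\prime_k \uparrow \Omega$ and reconstitute the global bound through monotone convergence, rather than working globally from the start.
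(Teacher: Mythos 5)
Your proposal is correct and follows essentially the same route as the paper: local compactness from the energy bound plus Poincar\'e/Morrey, weak lower semi-continuity on compact subsets, exhaustion of $\Omega$, and the constant recovery sequence. The only (harmless) variations are that you pass to the limit via weak convergence of the product $\rho_n^{2/p}\nabla f_n$ and dominated convergence for the recovery sequence, where the paper instead uses a multiplicative sandwich bound $(1\pm\delta_n/\inf\rho)^2$ between $\cEpinfty(\cdot;\rho_n\lfloor_{\Omega^\prime})$ and $\cEpinfty(\cdot;\rho\lfloor_{\Omega^\prime})$ together with Fatou's lemma.
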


\begin{proof}
Let $\Omega^\prime\subset\subset \Omega$.
We note that for each $n$ there exists $\delta_n$ such that for a.e. $\bm{x}\in\Omega^\prime$,
\begin{equation} \label{eq:ConvMini:DensityBounds}
\rho(\bm{x})\l 1-\frac{\delta_n}{\inf_{\bm{x}\in\Omega}(\rho(\bm{x}))}\r \leq \rho_n(\bm{x}) \leq \rho(\bm{x})\l 1+\frac{\delta_n}{\inf_{\bm{x}\in\Omega}(\rho(\bm{x}))}\r
\end{equation}
where $\delta_n\to 0$ as $n\to+\infty$. This implies that
\begin{equation} \label{eq:ConvMini:EnergyBounds}
\l 1-\frac{\delta_n}{\inf_{\bm{x}}(\rho(\bm{x}))}\r^2 \cEpinfty(f;\rho\lfloor_{\Omega^\prime}) \leq \cEpinfty(f;\rho_n\lfloor_{\Omega^\prime}) \leq \l 1+\frac{\delta_n}{\inf_{\bm{x}}(\rho(\bm{x}))}\r^2 \cEpinfty(f;\rho\lfloor_{\Omega^\prime}).
\end{equation}
Note that $\cEpinfty(f;\rho) = \sigma_\eta \|\rho^{\frac{2}{p}} \nabla f \|_{L^p(\Omega)}^p$.
This identity and~\eqref{eq:ConvMini:EnergyBounds} will be used to prove the two conditions for $\Gamma$-convergence.

\paragraph{(Liminf inequality.)} 
Assume that $f_n\to f$ in $L^p(\Omega)$ and $\liminf_{n\to+\infty} \cEpinftyCon(f_n;\rho_n)<+\infty$ (else the result is trivial).
By recourse to a subsequence (not relabelled) we may assume that
\[ \liminf_{n\to+\infty} \cEpinftyCon(f_n;\rho_n) = \lim_{n\to+\infty} \cEpinftyCon(f_n;\rho_n) \]
and therefore by the compactness property, Proposition~\ref{prop:ConvMini:CompactnpLap}, we have that $\{f_n\}_{n\in\bbN}$ is bounded in $W^{1,p}(\Omega^\prime)$ and hence there exists a further subsequence (not relabelled) weakly converging in $W^{1,p}(\Omega^\prime)$ and, by Lemma~\ref{lemma:ConvMini:RK}, strongly in $L^\infty(\Omega^\prime)$.
Strong convergence in $L^\infty(\Omega^\prime)$ implies that $f$ must also satisfy the constraints $f(\bm{x}_i) = y_i$ for $i=1,\dots, N$ (where we assume $\bm{x}_i\in \Omega^\prime$ for all $i=1,\dots, N$).
We note also that $f_n\rho^{\frac{2}{p}}$ is weakly convergent in $W^{1,p}(\Omega^\prime)$.
Hence, by~\eqref{eq:ConvMini:EnergyBounds} and weak lower semi-continuity of norms
\begin{align*}
\liminf_{n\to+\infty} \cEpinftyCon(f_n;\rho_n) & \geq \liminf_{n\to \infty} \cEpinftyCon(f_n;\rho_n\lfloor_{\Omega^\prime}) \geq \liminf_{n\to+\infty} \cEpinftyCon(f_n;\rho\lfloor_{\Omega^\prime}) \\
 & = \liminf_{n\to+\infty} \sigma_\eta \|\nabla f_n \rho^{\frac{2}{p}}\|_{L^p(\Omega^\prime)}^p \geq \sigma_\eta \|\nabla f \rho^{\frac{2}{p}}\|_{L^p(\Omega^\prime)}^p.
\end{align*}
Taking $\Omega^\prime\to \Omega$ and applying Fatou's lemma we have
\[ \liminf_{n\to+\infty} \cEpinftyCon(f_n;\rho_n) \geq \sigma_\eta \|\nabla f \rho^{\frac{2}{p}}\|_{L^p(\Omega)}^p = \cEpinftyCon(f;\rho) \]
as required.

\paragraph{(Recovery sequence.)} 
For a given $f\in L^p(\Omega)$ we choose $f_n=f$ and applying the upper bound in~\eqref{eq:ConvMini:EnergyBounds}:
\[ \cEpinftyCon(f;\rho_n) \leq \l 1+\frac{\delta_n}{\inf_{\bm{x}\in \Omega} \rho(\bm{x})} \r^2  \cEpinftyCon(f;\rho) + \sigma_\eta \int_{\Omega\setminus\Omega^\prime} |\nabla f(x)|^p \rho_n^2(x) \, \dd x. \]
Hence,
\[ \limsup_{n\to\infty} \cEpinftyCon(f;\rho_n) \leq \cEpinftyCon(f;\rho) + C\sigma_\eta \lda \mathds{1}_{\Omega\setminus\Omega^\prime} \nabla f\rda^p_{L^p(\Omega)}. \]
Taking $\Omega^\prime\to\Omega$ and applying the dominated convergence theorem we get
\[ \lim_{\Omega^\prime \to \Omega} \lda \mathds{1}_{\Omega\setminus\Omega^\prime} \nabla f\rda^p_{L^p(\Omega)} \to 0 \]
as required.
\end{proof}

The proof of Theorem~\ref{thm:MainResults:Minimizersnp} is then a simple application of Theorem~\ref{thm:ConvMini:minimizers} to Corollary~\ref{cor:ConvMini:CompactnpLapMin} and Lemma~\ref{lem:ConvMini:GammanpLap}.

\subsection{Convergence of the Non-Local Model}

We start with the compactness result.

\begin{proposition}
\label{prop:ConvMini:midLapCompact}
Assume that $\Omega,\mu,\eta,p,\rho_n$ and $\{\bm{x}_i\}_{i=1}^n$ satisfy Assumptions~\ref{Assum1}-\ref{Assum9}.
Then, any sequence $\{f_n\}_{n\in\bbN}$ satisfying $\sup_{n\in\bbN}\cFpnCon(f_n;\rho_n)<+\infty$ is precompact in $L^p(\Omega^\prime)$ for any $\Omega^\prime\subset\subset \Omega$.
Furthermore, if $f$ is a cluster point of $\{f_n\}_{n\in\bbN}$ in $L^p(\Omega)$ then $f\in C^{0,\alpha}(\Omega^\prime)$ for any $0<\alpha<1-\frac{d}{p}$ and $f(\bm{x}_i) = y_i$ for all $i=1,\dots, N$.
\end{proposition}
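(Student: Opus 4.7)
My plan follows the standard nonlocal-to-local compactness strategy developed in \cite{garciatrillos16,SlepcevThorpe2017}. First, I would reduce the statement to a bound on the canonical nonlocal seminorm. By Assumptions~\ref{Assum3} and~\ref{Assum9}, for any $\Omega^\prime\subset\subset\Omega$ there exists $c_{\Omega'}>0$ such that $\rho_n(\bm{x})\geq c_{\Omega'}$ on $\Omega'$ for all $n$ sufficiently large. Hence
\[ \cFpnCon(f_n;\rho_n) \geq c_{\Omega'}^2 \cdot \frac{1}{\eps_n^p}\int_{\Omega'}\int_{\Omega'} \eta_{\eps_n}(|\bm{x}-\bm{z}|)|f_n(\bm{x})-f_n(\bm{z})|^p \diff \bm{x}\diff \bm{z}, \]
so the sequence has uniformly bounded nonlocal seminorm on every compactly contained subdomain.

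Second, I would derive a uniform $L^p(\Omega')$ bound on $f_n$. Because $f_n\equiv y_1$ on $B(\bm{x}_1,\eps_n)$, a nonlocal Poincaré-type inequality (obtained by chaining along a path of points spaced $O(\eps_n)$ apart, which starts inside $B(\bm{x}_1,\eps_n)$) gives $\|f_n-y_1\|_{L^p(\Omega')}^p\leq C\cdot\mathrm{(nonlocal\ seminorm)}$. Combined with the seminorm bound of the first step, standard nonlocal-to-local compactness (as in~\cite{SlepcevThorpe2017}, ultimately going back to Bourgain--Brezis--Mironescu and Ponce) yields a subsequence converging in $L^p_{\loc}(\Omega)$ to some $f\in W^{1,p}_{\loc}(\Omega)$ with
\[ \sigma_\eta\|\nabla f\|^p_{L^p(\Omega')} \leq c_{\Omega'}^{-2}\liminf_{n\to\infty}\cFpnCon(f_n;\rho_n). \]
Since $p>d$, Morrey's inequality (Lemma~\ref{lemma:ConvMini:RK}) then gives $f\in C^{0,\alpha}(\Omega')$ for every $0<\alpha<1-d/p$.

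The main obstacle is verifying the hard constraint $f(\bm{x}_i)=y_i$. Because the constraint on $f_n$ is enforced only on the shrinking ball $B(\bm{x}_i,\eps_n)$, one cannot pass to the pointwise limit directly. My approach is to establish a localised nonlocal Poincaré inequality with reference value $y_i$:
\[ \int_{B(\bm{x}_i,R)} |f_n(\bm{x})-y_i|^p \diff\bm{x} \leq C R^p \cdot \frac{1}{\eps_n^p}\int_{B(\bm{x}_i,R+\eps_n)}\int_{B(\bm{x}_i,R+\eps_n)} \eta_{\eps_n}(|\bm{x}-\bm{z}|)|f_n(\bm{x})-f_n(\bm{z})|^p \diff\bm{x}\diff\bm{z}, \]
valid for $R>\eps_n$, where the chain of intermediate points has its starting point inside $B(\bm{x}_i,\eps_n)$ so the telescoped difference reduces to differences of $f_n$. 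Passing to the limit $n\to\infty$ using the strong $L^p_{\loc}$ convergence of $f_n\to f$ together with the uniform seminorm bound yields
\[ \int_{B(\bm{x}_i,R)} |f(\bm{x})-y_i|^p \diff\bm{x} \leq C R^p \qquad \text{for every sufficiently small } R>0. \]

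Finally, if $f(\bm{x}_i)\neq y_i$, the continuity of $f$ furnishes some $r_0>0$ with $|f-y_i|\geq\tfrac{1}{2}|f(\bm{x}_i)-y_i|$ on $B(\bm{x}_i,r_0)$, so for $R\leq r_0$ the left-hand side is bounded below by $c R^d$. Since $p>d$ we have $R^p = o(R^d)$ as $R\to 0^+$, contradicting the $CR^p$ upper bound. Therefore $f(\bm{x}_i)=y_i$ for every $i=1,\dots,N$, which completes the proof. The step I expect to require the most care is the localised nonlocal Poincaré inequality, and in particular tracking the correct dependence of its constant on $R$ and the radius of the ``pinning'' ball $B(\bm{x}_i,\eps_n)$.
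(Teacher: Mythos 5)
Your plan is sound in outline but follows a genuinely different route from the paper, and it leaves its hardest step as an assertion. The paper avoids chaining and BBM-type compactness altogether: it invokes a mollifier comparison (\cite[Lemma 4.3]{SlepcevThorpe2017}) giving $\cFpnCon(f_n;\rho_n)\geq C\,\cEpinfty(J_{\eps_n}\ast f_n;\mathds{1}_{\Omega'}\rho_n)$, observes that $\tilde f_n:=J_{\eps_n}\ast f_n$ satisfies the pointwise constraints \emph{exactly} (since $f_n\equiv y_i$ on all of $B(\bm{x}_i,\eps_n)$ and $J_{\eps_n}$ is supported in $B(0,\eps_n)$), and then applies the already-proved local compactness result (Proposition~\ref{prop:ConvMini:CompactnpLap}, which rests on the modified Poincar\'e inequality of Lemma~\ref{lem:ConvMini:Poincare} and Morrey) to get $C^{0,\alpha}$ precompactness of $\tilde f_n$. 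A Jensen estimate shows $\|f_n-\tilde f_n\|_{L^p(\Omega')}^p\lesssim \eps_n^p\,\cFpn(f_n;\rho_n)\to 0$, so $f_n$ inherits the $L^p$ precompactness, and the constraint $f(\bm{x}_i)=y_i$ passes to the limit for free via the uniform convergence $\tilde f_{n_k}\to f$. Your route instead proves the constraint by a capacity-style contradiction ($cR^d$ versus $CR^p$ with $p>d$), which is a perfectly good idea, but it hinges entirely on the localised nonlocal Poincar\'e inequality with constant $CR^p$ uniform in $\eps_n$ and with the pinning set shrinking at the interaction scale; you correctly flag this as the delicate point, but as written it is the one genuinely nontrivial lemma of your argument and it is not established. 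The mollification trick is precisely the device that lets the paper sidestep that lemma: it converts the $\eps_n$-ball constraint into an exact pointwise constraint on a $W^{1,p}$-bounded sequence, after which Morrey does all the work. If you want to complete your version, the chaining estimate is where the effort must go; otherwise I would recommend restructuring along the paper's lines, since Proposition~\ref{prop:ConvMini:CompactnpLap} is already available and is designed to be reused here.
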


\begin{proof}
By~\cite[Lemma 4.3]{SlepcevThorpe2017} there exists a mollifier $J$ such that $J$ has compact support in $\overline{B(0,1)}$, $J\leq C\eta$ for some $C$, and
\[ \cFpnCon(f_n;\rho_n) \geq C \cEpinfty(J_{\eps_n}\ast f_n;\mathds{1}_{\Omega^\prime}\rho_n) \]
for any $\Omega^\prime \subset\subset \Omega$ with $\dist(\Omega^\prime,\partial\Omega)>\eps_n$, where $\mathds{1}_{\Omega'}$ is an indicator function over $\Omega'$.
We have that $\sup_{n\in \bbN}\cEpinfty(\tilde{f}_n;\mathds{1}_{\Omega^\prime}\rho_n)<+\infty$ where $\tilde{f}_n=J_{\eps_n}\ast f_n$. 
Since, for all $i=1,\dots, N$,
\[ \tilde{f}_n(\bm{x}_i) = \int_{B(0,\eps_n)} J_{\eps_n}(\bm{z}) f_n(\bm{x}_i-\bm{z}) \, \dd \bm{z} = y_i \int_{B(0,\eps_n)} J_{\eps_n}(\bm{z}) \, \dd \bm{z} = y_i \]
then $\cEpinfty(\tilde{f}_n;\mathds{1}_{\Omega^\prime}\rho_n) = \cEpinftyCon(\tilde{f}_n;\mathds{1}_{\Omega^\prime}\rho_n)$.
By Proposition~\ref{prop:ConvMini:CompactnpLap}, $\{\tilde{f}_n\}_{n\in \bbN}$ is bounded in $W^{1,p}(\Omega^\prime)$ and precompact in $C^{0,\alpha}(\Omega^\prime)$.
Let $\tilde{f}_{n_k} \to f$ in $C^{0,\alpha}(\Omega^\prime)$ as $k\to+\infty$. 
We claim that $f_{n_k} \to f$ in $L^p(\Omega^\prime)$.

It is enough to show that $\|f_{n_k} - \tilde{f}_{n_k}\|_{L^p(\Omega^\prime)} \to 0$.
By Jensen's inequality we have
\begin{align*}
\| f_{n_k} - \tilde{f}_{n_k} \|^p_{L^p(\Omega^\prime)} & = \int_{\Omega^\prime} \la f_{n_k}(\bm{x}) - \int_{B(0,\eps_{n_k})} J_{\eps_{n_k}}(\bm{x}-\bm{z}) f_{n_k}(\bm{z}) \, \dd \bm{z} \ra^p \, \dd \bm{x} \\
 & \leq \int_{\Omega^\prime} \int_{B(0,\eps_{n_k})} J_{\eps_{n_k}}(\bm{x}-\bm{z}) \la f_{n_k}(\bm{x}) - f_{n_k}(\bm{z}) \ra^p \, \dd \bm{z} \, \dd \bm{x} \\
 & \leq \frac{C \eps_{n_k}^p}{\inf_{\bm{x}\in \Omega} \rho_{n_k}^2(\bm{x})} \cF^{(p)}_{\eps_{n_k}}(f_{n_k};\rho_{n_k}) \to 0
\end{align*}
It follows that $\{f_n\}_{n\in\bbN}$ is compact in $L^p(\Omega^\prime)$.
\end{proof}

As in the previous section the above compactness property can be applied to minimisers.

\begin{corollary}
\label{cor:ConvMini:CompactnonLocalpLapMin}
Assume that $\Omega,\mu,\eta,p,\rho_n$ and $\{\bm{x}_i\}_{i=1}^n$ satisfy Assumptions~\ref{Assum1}-\ref{Assum9}.
Then, minimisers of $\cFpnCon(\cdot;\rho_n)$ are precompact in $L^{p}(\Omega^\prime)$, for any $\Omega^\prime\subset\subset\Omega$.
Furthermore if $f$ is a cluster point of $\{f_n\}_{n\in\bbN}$ in $L^p(\Omega^\prime)$ then $f\in C^{0,\alpha}(\Omega^\prime)$ for any $0<\alpha<1-\frac{d}{p}$ and $f(\bm{x}_i) = y_i$ for all $i=1,\dots, N$.
\end{corollary}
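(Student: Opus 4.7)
The corollary is an immediate consequence of Proposition~\ref{prop:ConvMini:midLapCompact}, by the same template used to derive Corollary~\ref{cor:ConvMini:CompactnpLapMin} from Proposition~\ref{prop:ConvMini:CompactnpLap}. The plan is to exhibit a single admissible competitor $f^\dagger$ whose non-local constrained energy $\cFpnCon(f^\dagger;\rho_n)$ is bounded uniformly in $n$; comparing a minimiser $f_n$ of $\cFpnCon(\cdot;\rho_n)$ to $f^\dagger$ then yields $\sup_{n\in\bbN}\cFpnCon(f_n;\rho_n)<+\infty$, and both conclusions of the corollary drop out of Proposition~\ref{prop:ConvMini:midLapCompact} applied to such a minimising sequence.

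To build the competitor, note that the labelled points $\{\bm{x}_i\}_{i=1}^N\subset\Omega$ are finite, distinct, and lie in the open set $\Omega$, so one may fix $\delta>0$ such that the balls $B(\bm{x}_i,\delta)$ are pairwise disjoint and all contained in $\Omega$. Take any globally Lipschitz $f^\dagger:\Omega\to\bbR$ that is identically equal to $y_i$ on $B(\bm{x}_i,\delta)$ for $i=1,\dots,N$ (for instance, a smooth mollification of a piecewise constant map defined on a small neighbourhood of each $\bm{x}_i$ and extended arbitrarily to the rest of $\Omega$). The Lipschitz constant $L$ of $f^\dagger$ depends only on $\delta$ and on the labels, hence is independent of $n$. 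Since $\eps_n\to 0$, for $n$ sufficiently large $\eps_n<\delta$, so $f^\dagger\equiv y_i$ on $B(\bm{x}_i,\eps_n)$ and therefore $\cFpnCon(f^\dagger;\rho_n)=\cFpn(f^\dagger;\rho_n)$.

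To bound this energy, use $|f^\dagger(\bm{x})-f^\dagger(\bm{z})|\leq L|\bm{x}-\bm{z}|$ together with the substitution $\bm{u}=(\bm{z}-\bm{x})/\eps_n$:
\[
\cFpn(f^\dagger;\rho_n) \leq \frac{L^p\|\rho_n\|_{L^\infty}^2}{\eps_n^p}\int_\Omega\int_\Omega \eta_{\eps_n}(|\bm{x}-\bm{z}|)|\bm{x}-\bm{z}|^p \diff \bm{z}\diff \bm{x} \leq L^p\|\rho_n\|_{L^\infty}^2 |\Omega|\int_{\bbR^d}\eta(|\bm{u}|)|\bm{u}|^p\diff \bm{u}.
\]
The last integral is finite because $\sigma_\eta<+\infty$ (Assumption~\ref{Assum7}) and $\eta$ is radial, which gives $\int_{\bbR^d}\eta(|\bm{u}|)|\bm{u}|^p\diff \bm{u}\leq d^{1+p/2}\sigma_\eta$. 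Combined with $\sup_n\|\rho_n\|_{L^\infty}<+\infty$ from Assumption~\ref{Assum9}, this yields $\sup_n\cFpnCon(f^\dagger;\rho_n)\leq C$ for a constant independent of $n$.

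There is no substantive obstacle in this argument; the only subtlety is ensuring the Lipschitz constant of $f^\dagger$ does not depend on $n$, which works precisely because the constraint regions $B(\bm{x}_i,\eps_n)$ shrink into the fixed neighbourhoods $B(\bm{x}_i,\delta)$. With the uniform energy bound in hand, Proposition~\ref{prop:ConvMini:midLapCompact} applied to any sequence of minimisers $\{f_n\}_{n\in\bbN}$ of $\cFpnCon(\cdot;\rho_n)$ delivers $L^p(\Omega^\prime)$-precompactness on every $\Omega^\prime\subset\subset\Omega$, together with the Hölder regularity and preservation of the pointwise constraints at the cluster points, which is exactly the statement of the corollary.
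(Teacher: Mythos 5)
Your proposal is correct and follows essentially the same route as the paper: the paper's proof likewise constructs a fixed Lipschitz competitor $f^\dagger$ that is constant on balls of fixed radius $R$ around the labelled points, notes that for $\eps_n<R$ it is admissible, bounds $\cFpnCon(f^\dagger;\rho_n)$ by $L$, $\sup_n\|\rho_n\|_{L^\infty}^2$, $\Vol(\Omega)$ and $\int\eta(|\bm{x}|)|\bm{x}|^p\,\dd\bm{x}$ via the same change of variables, and then invokes Proposition~\ref{prop:ConvMini:midLapCompact}. The only differences are cosmetic (your explicit constant $d^{1+p/2}\sigma_\eta$ versus the paper's $d\sigma_\eta$; finiteness is all that matters).
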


\begin{proof}
Let $R>0$ satisfy $\min_{i\neq j\in \{1,\dots, N\}} |\bm{x}_i - \bm{x}_j| \geq 3R$.
Choose any $f^\dagger\in C^\infty(\Omega')$ that smoothly interpolates between constraints on balls of radius $R$ around each $\bm{x}_i$ (for $i=1,\dots, N$), i.e. $\|f^\dagger\|_{W^{1,p}(\Omega^\prime)}<+\infty$ and $f^\dagger(\bm{x})=y_i$, for $\bm{x}\in B(\bm{x}_i,R)$, $i=1,\dots, N$.
We assume $\eps_n<R$ and let $L$ be the Lipschitz constant for $f^\dagger$.

Let $f_n$ be a sequence of minimisers of $\cFpnCon$.
Then,
\begin{align*}
\cFpnCon(f_n;\rho_n) & \leq \cFpnCon(f^\dagger;\rho_n) \\
 & \leq \frac{L\|\rho_n\|_{L^\infty(\Omega)}^2}{\eps_n^p} \int_\Omega \int_\Omega \eta_{\eps_n}(|\bm{x} - \bm{z}|) |\bm{x} - \bm{z}|^p \, \dd \bm{x} \, \dd \bm{z} \\
 & = L \|\rho_n\|_{L^\infty(\Omega)}^2 \Vol(\Omega) \int_\Omega \eta(|\bm{x}|) |\bm{x}|^p \, \dd \bm{x} \\
 & = d\sigma_\eta L \|\rho_n\|_{L^\infty(\Omega)}^2 \Vol(\Omega).
\end{align*}
So, $\sup_{n\in\bbN}\cFpnCon(f_n;\rho_n) <+\infty$, hence the result follows from Proposition~\ref{prop:ConvMini:midLapCompact}.
\end{proof}

We now prove $\Gamma$-convergence.

\begin{lemma}
\label{lem:ConvMini:midLapGamma}
Assume that $\Omega,\mu,\eta,p,\rho_n$ and $\{\bm{x}_i\}_{i=1}^n$ satisfy Assumptions~\ref{Assum1}-\ref{Assum9}.
Then, $\cFpnCon(\cdot;\rho_n)$ $\Gamma$-converges to $\cEpinftyCon(\cdot;\rho)$.	
\end{lemma}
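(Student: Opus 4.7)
The strategy is to reduce the statement to the corresponding $\Gamma$-convergence result with a fixed density that was already established in~\cite{SlepcevThorpe2017}, by using Assumption~\ref{Assum9} to sandwich $\rho_n$ between $(1\pm\delta_n)\rho$ locally, exactly as in the proof of Lemma~\ref{lem:ConvMini:GammanpLap}. The key algebraic observation is that, for any $\Omega'\subset\subset\Omega$ on which~\eqref{eq:ConvMini:DensityBounds} holds, the bilinear structure of $\cFpnCon$ in the density gives
\[
\Bigl(1-\tfrac{\delta_n}{\inf\rho}\Bigr)^2 \cF^{(p)}_{\eps_n}(f;\rho\lfloor_{\Omega'}) \;\leq\; \cF^{(p)}_{\eps_n}(f;\rho_n\lfloor_{\Omega'}) \;\leq\; \Bigl(1+\tfrac{\delta_n}{\inf\rho}\Bigr)^2 \cF^{(p)}_{\eps_n}(f;\rho\lfloor_{\Omega'}),
\]
which is the non-local analogue of~\eqref{eq:ConvMini:EnergyBounds}.

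For the liminf inequality I would take $f_n\to f$ in $L^p(\Omega)$ with $\liminf \cFpnCon(f_n;\rho_n)<+\infty$, pass to a subsequence realising the liminf, fix an arbitrary $\Omega'\subset\subset\Omega$ containing all the labelled points $\bm{x}_1,\ldots,\bm{x}_N$, and use the lower sandwich bound above to deduce
\[
\liminf_{n\to\infty}\cFpnCon(f_n;\rho_n) \;\geq\; \liminf_{n\to\infty}\cF^{(p)}_{\eps_n}(f_n;\rho\lfloor_{\Omega'}).
\]
By Proposition~\ref{prop:ConvMini:midLapCompact}, any cluster point $f$ of $\{f_n\}$ lies in $W^{1,p}(\Omega')\cap C^{0,\alpha}(\Omega')$ and satisfies $f(\bm{x}_i)=y_i$, so the constrained continuum energy is finite and well-defined. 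I would then invoke the (unconstrained) liminf part of the non-local to local $\Gamma$-convergence from~\cite[Theorem~1.2]{SlepcevThorpe2017}, applied with the fixed density $\rho\lfloor_{\Omega'}$, to obtain $\liminf \cF^{(p)}_{\eps_n}(f_n;\rho\lfloor_{\Omega'}) \geq \cEpinfty(f;\rho\lfloor_{\Omega'})$. Sending $\Omega'\uparrow\Omega$ and applying Fatou's lemma yields the required $\cEpinftyCon(f;\rho)$.

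For the recovery sequence I would fix $f$ with $\cEpinftyCon(f;\rho)<+\infty$ (in particular $f\in C^{0,\alpha}$ so pointwise constraints are well-defined) and borrow the construction from~\cite{SlepcevThorpe2017}: flatten $f$ to the constant $y_i$ on each ball $B(\bm{x}_i,\eps_n)$ and interpolate smoothly on a thin annulus $B(\bm{x}_i,2\eps_n)\setminus B(\bm{x}_i,\eps_n)$, yielding a modification $\tilde f_n$ with $\tilde f_n\to f$ in $L^p(\Omega)$ and $\limsup \cF^{(p)}_{\eps_n}(\tilde f_n;\rho)\leq \cEpinfty(f;\rho)$ (the energy added by the flattening tends to $0$ since the volume of the annuli is $O(\eps_n^d)$ and the gradient is $O(1)$ on them by H\"{o}lder continuity of $f$). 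Using the upper sandwich bound together with the global uniform bound $\sup_n\|\rho_n\|_{L^\infty}<+\infty$ to control the contribution from $\Omega\setminus\Omega'$, I get
\[
\limsup_{n\to\infty}\cFpnCon(\tilde f_n;\rho_n) \;\leq\; \cEpinftyCon(f;\rho) + C\sigma_\eta\|\mathds{1}_{\Omega\setminus\Omega'}\nabla f\|_{L^p(\Omega)}^p,
\]
and a final passage $\Omega'\uparrow\Omega$ with dominated convergence closes the bound.

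The main obstacle is the recovery sequence: the constraint sets $B(\bm{x}_i,\eps_n)$ shrink to points, so the flattening construction must simultaneously satisfy the hard constraint and not inflate the non-local energy. This is delicate because the gradient in the annulus is $O(\eps_n^{\alpha-1})$ only through the H\"{o}lder regularity, while the number of non-local interactions scales like $\eps_n^d$; the case $p>d$ is exactly what makes the two scales compatible, so the $O(1)$-contribution from the annulus vanishes in the limit. This is essentially the construction in~\cite{SlepcevThorpe2017}, and the only new ingredient here is the density-sandwich step, which is uniform in $f_n$ once $\rho_n$ is close to $\rho$ in $L^\infty_{\mathrm{loc}}$.
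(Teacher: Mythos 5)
Your overall architecture (the density-sandwich bound, reduction to the fixed-density results of~\cite{SlepcevThorpe2017}, compactness to recover the constraints, Fatou as $\Omega'\uparrow\Omega$) is the same as the paper's, and your liminf argument is essentially correct. The gap is in the recovery sequence, and it is a real one, not a matter of presentation. You work directly with a general $f$ with $\cEpinftyCon(f;\rho)<+\infty$, i.e.\ $f\in W^{1,p}(\Omega)\hookrightarrow C^{0,\alpha}$ with $\alpha\leq 1-\tfrac{d}{p}$, and flatten $f$ to $y_i$ over an annulus of width $\eps_n$. Your first claim, that the gradient on the annulus is $O(1)$ ``by H\"older continuity'', is false: the oscillation of $f$ across the annulus is only $O(\eps_n^{\alpha})$, so the interpolation gradient is $O(\eps_n^{\alpha-1})$, which blows up. Your later, corrected accounting gives an added energy of order $\eps_n^{(\alpha-1)p+d}$, and at the critical exponent $\alpha=1-\tfrac{d}{p}$ this is exactly $O(1)$ --- it is \emph{bounded}, not vanishing, and for any admissible $\alpha<1-\tfrac{d}{p}$ it diverges. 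So the construction as stated does not produce a recovery sequence for general $f\in W^{1,p}$; the sentence ``the $O(1)$-contribution from the annulus vanishes in the limit'' is where the argument breaks.

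The paper closes this gap with a two-step structure you have omitted. First, the recovery sequence is built only for \emph{Lipschitz} $f$: there $f_n$ is simply set to $y_i$ on $B(\bm{x}_i,\eps_n)$ and to $f$ outside (no smooth interpolation is needed, since $\cFpn$ is defined on $L^p$ and tolerates jumps), and the elementary inequality $|a|^p-|b|^p\leq \xi|b|^p+C_\xi|a-b|^p$ reduces the energy error to $\eps_n^{-p}\|f_n-f\|_{L^p}^p=O(\eps_n^{d})\to 0$, which genuinely closes because $\|f_n-f\|_{L^p}^p=O(\eps_n^{p+d})$ for Lipschitz $f$. Second, one passes to general $f\in W^{1,p}$ by density of Lipschitz functions and a diagonalisation argument. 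You need to add this reduction; without it the limsup inequality is only proved on a dense subclass. A secondary omission: the paper also has to truncate $\eta$ (Assumption~\ref{Assum7} only gives integrability of $\eta(|\bm{x}|)|\bm{x}|^p$, not compact support) and remove the truncation by monotone convergence; your sketch silently assumes compactly supported $\eta$.
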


\begin{proof}
We recall~\eqref{eq:ConvMini:DensityBounds} and therefore for there exists a sequence $\gamma_n\to 0$ such that 
\begin{equation} \label{eq:ConvMini:FEnergyBounds}
(1-\gamma_n)^2 \cFpnCon(f;\rho\lfloor_{\Omega^\prime}) \leq \cFpnCon(f;\rho_n\lfloor_{\Omega^\prime}) \leq (1+\gamma_n)^2 \cFpnCon(f;\rho\lfloor_{\Omega^\prime})
\end{equation}
for all $f\in L^p(\Omega)$.

\paragraph{(Liminf inequality.)} 
Assume $f_n\to f$ in $L^p(\Omega)$ and $\liminf_{n\to\infty} \cFpnCon(f_n;\rho_n) <\infty$ else the result is trivial.
By recourse to a subsequence (relabelled) we assume that
\[ \liminf_{n\to\infty} \cFpnCon(f_n;\rho_n) = \lim_{n\to\infty} \cFpnCon(f_n;\rho_n). \]
By the compactness property (Proposition~\ref{prop:ConvMini:midLapCompact}) we have that $f(x_i) = y_i$ for all $i=1,\dots, N$.
Now,
\[ \liminf_{n\to\infty} \cFpnCon(f_n;\rho_n) \geq \liminf_{n\to\infty} \cFpnCon(f_n;\rho_n\lfloor_{\Omega^\prime}) \geq \liminf_{n\to\infty} \cFpn(f_n;\rho\lfloor_{\Omega^\prime}) \geq \cEpinfty(f;\rho\lfloor_{\Omega^\prime}) \]
by~\cite[Lemma 4.6]{SlepcevThorpe2017} and~\eqref{eq:ConvMini:FEnergyBounds}.
By Fatou's lemma $\liminf_{\Omega^\prime\to\Omega} \cEpinfty(f,\rho\lfloor_{\Omega^\prime}) \geq \cEpinfty(f,\rho)$, hence
\[ \liminf_{n\to\infty} \cFpnCon(f_n;\rho_n) \geq \cEpinfty(f,\rho). \]
Since the constraints are satisfied then $\cEpinfty(f;\rho) = \cEpinftyCon(f;\rho)$.

\paragraph{(Recovery sequence.)}
We prove the recover sequence in three parts.

\emph{Part 1:}
Assume $f\in W^{1,p}(\Omega)$ is Lipschitz continuous with $\cEpinftyCon (f;\rho)<\infty$ and $\eta$ has compact support in $B(0,M)$.
We define
\[ f_n(x) = \lb \begin{array}{ll} y_i & \text{if } |x-x_i|<\eps_n \text{ for } i=1,\dots, N \\ f(x) & \text{else.} \end{array} \rd \]
Then as $f_n$ and $f$ agree away from the constraints,
\begin{align*}
\|f_n-f\|_{L^p(\Omega)}^p & = \int_{\Omega}|f_n(\bm{x})-f(\bm{x})|^p \diff \bm{x},\\
 & = \int_{\cup_{i=1}^N B(\bm{x}_i,\eps_n)}|y_i-f(\bm{x})|^p\diff \bm{x},\\
 & \leq \Lip(f)^p \sum_{i=1}^N\int_{B(\bm{x}_i,\eps_n)} |\bm{x}_i - \bm{x}|^p \diff x \\
 & \leq \Lip(f)^p N \eps_n^p \Vol(B(0,\eps_n)) \\
 & \leq C\eps_n^{p+d}.
\end{align*}
Hence, $f_n\to f$ in $L^p(\Omega)$.

We recall the following: for all $\xi>0$ there exists a constant $C_\xi>0$ such that, for any $a,b\in \bbR^d$,
\[ |a|^p - |b|^p \leq \xi |b|^p + C_\xi |a-b|^p. \]
So, for a fixed $\xi>0$,
\begin{align*}
& \cFpn(f_n;\rho) - \cFpn(f;\rho) \\
& \qquad = \frac{1}{\eps_n^p} \int_\Omega \int_\Omega \eta_{\eps_n}(|\bm{x}-\bm{z}|) \Big( |f_n(\bm{x}) - f_n(\bm{z})|^p - |f(\bm{x}) - f(\bm{z})|^p \Big) \rho(\bm{x}) \rho(\bm{z}) \, \dd \bm{x} \, \dd \bm{z} \\
& \qquad \leq \frac{\xi}{\eps_n^p} \int_\Omega \int_\Omega \eta_{\eps_n}(|\bm{x}-\bm{z}|) |f(\bm{x}) - f(\bm{z})|^p \rho(\bm{x}) \rho(\bm{z}) \, \dd \bm{x} \, \dd \bm{z} \\
& \qquad \qquad + \frac{C_\xi}{\eps_n^p} \int_\Omega \int_\Omega \eta_{\eps_n}(|\bm{x}-\bm{z}|) |f_n(\bm{x}) - f_n(\bm{z}) - f(\bm{x}) + f(\bm{z})|^p \rho(\bm{x}) \rho(\bm{z}) \, \dd \bm{x} \, \dd \bm{z} \\
& \qquad \leq \xi \cFpn(f;\rho) + \frac{2p C_\xi \|\rho\|_{L^\infty(\Omega)}^2}{\eps_n^p} \int_\Omega \int_\Omega \eta_{\eps_n}(|\bm{x}-\bm{z}|) |f_n(\bm{x}) - f(\bm{x})|^p \, \dd \bm{x} \, \dd \bm{z} \\
& \qquad = \xi \cFpn(f;\rho) + \frac{2p C_\xi \|\rho\|_{L^\infty(\Omega)}^2}{\eps_n^p} \int_{\bbR^d} \eta(|\bm{x}|) \, \dd \bm{x} \|f_n - f\|_{L^p(\Omega)}^p \\
& \qquad \leq \xi \cFpn(f;\rho) + \tilde{C}\eps_n^d.
\end{align*}
Hence, $\cFpn(f_n;\rho) \leq (1+\xi)\cFpn(f;\rho) + O(\eps_n^{d})$.

Applying the above to~\eqref{eq:ConvMini:FEnergyBounds} we have,
\begin{align*}
\cFpnCon(f_n;\rho_n) & \leq \cFpn(f_n;\rho) + \frac{2}{\eps_n^p} \int_\Omega \int_{\Omega\setminus \Omega^\prime} \eta_{\eps_n}(|\bm{x} - \bm{z}|) |f(\bm{x}) - f(\bm{z})|^p \rho_n(\bm{x}) \rho_n(\bm{z}) \, \dd \bm{x} \, \dd \bm{z} \\
 & \leq (1+\xi) \cFpn(f;\rho) + O(\eps_n^{d}) \\
 & \quad + \frac{2\|\rho_n\|_{L^\infty}^2\Lip(f)}{\eps_n^p} \int_{\dist(z,\partial \Omega) \leq \dH(\Omega,\Omega^\prime) + M\eps_n} \int_{\Omega\setminus\Omega^\prime} \eta_{\eps_n}(|\bm{x} - \bm{z}|) |\bm{x} - \bm{z}|^p \, \dd \bm{x} \, \dd \bm{z} \\
 & \leq (1+\xi) \cFpn(f;\rho) + O(\eps_n^{d}) \\
 & \quad + 2d\sigma_\eta \Lip(f) \Vol\l\lb z\,:\, \dist(z,\partial \Omega)\leq \dH(\Omega,\Omega^\prime) + M\eps_n \rb\r \sup_{n\in\bbN} \|\rho_n\|_{L^\infty}^2.
\end{align*}
By~\cite[Lemma 4.6]{SlepcevThorpe2017} we have $\limsup_{n\to\infty}\cFpn(f;\rho)\leq \cEpinfty(f;\rho) = \cEpinftyCon(f;\rho)$.
So taking $n\to \infty$ followed by $\xi\to 0$ and $\Omega^\prime \to \Omega$ we have,
\[ \limsup_{n\to\infty} \cFpnCon(f_n;\rho_n) \leq \cEpinftyCon(f;\rho). \]

\emph{Part 2:}
We still assume that $f$ is Lipschitz continuous but relax the compact support assumption on $\eta$.
Assume $\eta$ satisfies the integrability condition in (A7).
We define $\cFpn(\cdot;\rho,\eta)$ to be the functional $\cFpn(\cdot;\rho)$ with weight function $\eta$.
Then, we let $\eta^M$ be the truncated weight function $\eta^M(t) = \eta(t) \mathds{1}_{t\leq M}$.
Now,
\[ \cFpn(f;\rho_n,\eta) = \cFpn(f;\rho_n,\eta^M) + \frac{1}{\eps_n^p} \int\int_{|\bm{x}-\bm{z}|> M\eps_n} \eta_{\eps_n}(|\bm{x}-\bm{z}|) |f(\bm{x}) - f(\bm{z})|^p \rho_n(\bm{x}) \rho_n(\bm{z}) \, \dd \bm{x} \, \dd \bm{z}. \]
We can apply part 1 to the first term on the right hand side.
For the second term, for each $\bm{z}\in\Omega$,
\[ \frac{1}{\eps_n^p} \int_{|\bm{x}-\bm{z}|> M\eps_n} \eta_{\eps_n}(|\bm{x}-\bm{z}|) |f(\bm{x}) - f(\bm{z})|^p \rho_n(\bm{x}) \, \dd \bm{x} \leq \Lip(f) \|\rho_n\|_{L^\infty} \int_{|\bm{w}|\geq M} \eta(|\bm{w}|) |\bm{w}|^p \, \dd \bm{w}. \]
Hence, (using $\eta^M\leq \eta$)
\[ \limsup_{n\to \infty} \cFpn(f;\rho_n,\eta) \leq \cEpinfty(f;\rho,\eta) + \Lip(f) \sup_{n\in\bbN}\|\rho_n\|_{L^\infty}^2 \Vol(\Omega) \int_{|\bm{w}|\geq M} \eta(|\bm{w}|) |\bm{w}|^p \, \dd \bm{w}. \]
By the monotone convergence theorem, taking $M\to\infty$ we have
\[ \limsup_{n\to \infty} \cFpn(f;\rho_n,\eta) \leq \cEpinfty(f;\rho,\eta) \]
as required.

\emph{Part 3:}
Since Lipschitz functions are dense in $W^{1,p}$ we can, as is usual in $\Gamma$-convergence arguments, conclude by a diagonalisation argument.
\end{proof}

\section{Convergence of Density Estimates} \label{sec:DensityEstimates}

In the following two subsections we prove that the kernel density estimate, and the spline kernel density estimate, satisfy Assumption (A9).

\subsection{Convergence of the Kernel Density Estimate} \label{subsec:DensityEstimates:KDE}

Our result is an easy consequence of the following theorem due to~\cite[Theorem 2.3]{GINE2002}.

\begin{theorem}
\label{thm:DensityEstimates:KDE:Gine02}
Let $K = \phi\circ \xi$ where $\phi$ is a bounded function of bounded variation and $\xi$ is a polynomial.
Assume $x_i\iid \mu$ where $\mu\in \cP(\bbR^d)$ has a bounded density $\rho$.
Assume $h_n$ satisfies
\[	h_n \to 0^+, \quad \frac{nh^d_n}{|\log(h_n)|} \to \infty, \quad \frac{|\log(h_n)|}{\log\log(n)} \to \infty,\quad \text{and} \quad h_n \leq c h_{2n} \]
for some $c>0$.
Define $\rho_{n,h}$ as in~\eqref{eq:MainResults:KDE} and $\bar{\rho}_{h}$ by
\[ \bar{\rho}_{h}(\bm{x}) = \bbE \rho_{n,h}(x) = \int_{\bbR^d} K_h(\bm{x} - \bm{z}) \, \dd \mu(\bm{z}). \]
Then there exists $C>0$ such that, with probability one,
\[ \limsup_{n\to\infty} \sqrt{\frac{n h_n^d}{|\log h_n|}} \| \rho_{n,h_n} - \bar{\rho}_{h_n} \|_{L^\infty(\bbR^d)} = C. \]
\end{theorem}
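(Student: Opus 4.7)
The plan is to realise $\rho_{n,h} - \bar\rho_h$ as a centred empirical process indexed by a uniformly VC-type class of functions, bound its expected supremum by Dudley chaining, upgrade to an almost-sure upper bound via Talagrand's concentration inequality, produce a matching positive lower bound at a single base point, and conclude that the rescaled limsup is almost surely a positive deterministic constant $C$ by the Kolmogorov $0$-$1$ law. The main obstacle is producing the positive lower bound: a rate-matching upper bound is a comparatively standard chaining+concentration exercise, but showing that the rescaled supremum does not collapse to $0$ requires a delicate Gaussian coupling executed uniformly in the admissible bandwidth scaling.

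\emph{Empirical process setup.} Write
\[ \rho_{n,h}(\bm{x}) - \bar\rho_h(\bm{x}) = (\mathbb{P}_n - \mu)\big(K_h(\bm{x}-\cdot)\big), \]
so that $\|\rho_{n,h} - \bar\rho_h\|_{L^\infty(\bbR^d)} = \sup_{f \in \mathcal{F}_h} |(\mathbb{P}_n - \mu)f|$, where $\mathcal{F}_h = \{K_h(\bm{x}-\cdot) : \bm{x} \in \bbR^d\}$ has envelope $U_h = h^{-d}\|K\|_\infty$ and variance $\sigma_h^2 \le h^{-d}\|\rho\|_\infty \|K\|_{L^2}^2$. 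The structural hypothesis $K = \phi \circ \xi$ with $\phi$ of bounded variation and $\xi$ polynomial is exactly what is needed so that the class $\bigcup_{h>0} \mathcal{F}_h$ is a VC-subgraph class of uniformly bounded index: decomposing $\phi$ into monotone pieces reduces each subgraph to a polynomial sublevel set in $(\bm{z},t)$-space, and the collection of such sets has bounded VC dimension by a Milnor-Warren-type polynomial-sign argument. Equivalently, one obtains the uniform entropy bound $\sup_Q N(\mathcal{F}_h, \tau U_h, L^2(Q)) \le (A/\tau)^v$ for $\tau \in (0,1]$.

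\emph{Upper bound.} Dudley chaining against this uniform entropy estimate gives the expected-supremum bound
\[ \bbE \|\rho_{n,h_n} - \bar\rho_{h_n}\|_{L^\infty(\bbR^d)} \le C_1 \sqrt{\frac{|\log h_n|}{n h_n^d}}, \]
the assumption $n h_n^d/|\log h_n| \to \infty$ ensuring we sit in the sub-Gaussian regime $\sigma_{h_n} \gg U_{h_n}/\sqrt{n}$ where the chaining rate is sharp. Talagrand's concentration inequality for bounded empirical processes then gives, for every $t > 0$,
\[ \mathbb{P}\!\Big( \big| \|\rho_{n,h_n} - \bar\rho_{h_n}\|_{L^\infty} - \bbE \|\rho_{n,h_n} - \bar\rho_{h_n}\|_{L^\infty} \big| \ge t \Big) \le 2 \exp\!\Big( -c \min\!\big\{ n t^2/\sigma_{h_n}^2,\, n t/U_{h_n} \big\} \Big). \]
Applied along the dyadic subsequence $n_k = 2^k$ with $t = \delta \sqrt{|\log h_{n_k}|/(n_k h_{n_k}^d)}$, the remaining assumption $|\log h_n|/\log\log n \to \infty$ makes both exponential tails summable. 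The Borel-Cantelli lemma yields the almost-sure upper bound on the rescaled limsup along $\{n_k\}$, and the doubling condition $h_n \le c h_{2n}$ interpolates the bound to all $n$, giving $\limsup_n \sqrt{n h_n^d/|\log h_n|}\, \|\rho_{n,h_n} - \bar\rho_{h_n}\|_{L^\infty} \le C^{+}$ almost surely.

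\emph{Lower bound and $0$-$1$ law.} Fix any $\bm{x}_0$ with $\rho(\bm{x}_0) > 0$ and consider the scalar fluctuation $Z_n = (\mathbb{P}_n - \mu)(K_{h_n}(\bm{x}_0-\cdot))$, whose variance is comparable to $\rho(\bm{x}_0)\|K\|_{L^2}^2/(n h_n^d)$. A Gaussian coupling of the partial sum (Skorokhod embedding, KMT, or truncation plus Poissonisation and Bernstein) combined with the classical Kolmogorov law of the iterated logarithm for sums of independent centred random variables with growing variances yields $|Z_n| \ge C^{-} \sqrt{|\log h_n|/(n h_n^d)}$ infinitely often, almost surely; this is the main technical hurdle because the variance must be tracked uniformly as $h_n\to 0$. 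Since $\|\rho_{n,h_n} - \bar\rho_{h_n}\|_{L^\infty} \ge |Z_n|$, this gives a positive almost-sure lower bound $\liminf \ge C^{-}$. Finally, the rescaled limsup is a tail random variable in the i.i.d.\ sequence $(\bm{x}_i)$, so the Kolmogorov $0$-$1$ law forces it to equal almost surely some deterministic $C \in [C^{-}, C^{+}] \subset (0,\infty)$, completing the proof.
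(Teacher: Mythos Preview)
The paper does not prove this statement at all: it is quoted verbatim from Gin\'e and Guillou~\cite[Theorem 2.3]{GINE2002} and used as a black box to deduce Theorem~\ref{thm:MainResults:DensResults:KDE}. Your proposal is therefore not a comparison against the paper's argument but an attempt to reprove the Gin\'e--Guillou theorem itself. The upper-bound half of your sketch (VC-type class from $K=\phi\circ\xi$, Dudley chaining for the expected supremum, Talagrand concentration along a dyadic subsequence, doubling to interpolate) is exactly the structure of the original proof and is sound at the level of a plan.

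The lower-bound half, however, contains a genuine gap. Your single-point statistic $Z_n=(\mathbb{P}_n-\mu)(K_{h_n}(\bm{x}_0-\cdot))$ has variance of order $\rho(\bm{x}_0)\|K\|_{L^2}^2/(n h_n^d)$, so any LIL-type argument for $Z_n$ can produce fluctuations of size at most $\sqrt{\log\log n/(n h_n^d)}$, which is strictly smaller than the target $\sqrt{|\log h_n|/(n h_n^d)}$ precisely because of the hypothesis $|\log h_n|/\log\log n\to\infty$. The missing $\sqrt{|\log h_n|}$ factor is not an artefact of the coupling; it comes from taking a supremum over roughly $h_n^{-d}$ well-separated base points $\bm{x}_j$, whose centred statistics are asymptotically independent, so that the maximum of $N\sim h_n^{-d}$ near-Gaussian variables of standard deviation $\sigma$ behaves like $\sigma\sqrt{2\log N}\asymp\sqrt{|\log h_n|/(n h_n^d)}$. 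Without this spatial-maximum step the lower bound fails, and the $0$--$1$ argument only yields that the rescaled $\limsup$ is a.s.\ a constant in $[0,C^{+}]$, not that it is strictly positive. (A smaller slip: ``$|Z_n|\ge C^{-}\sqrt{\cdots}$ infinitely often'' gives a lower bound on the $\limsup$, not the $\liminf$.)
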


As remarked in Section~\ref{subsec:MainResults:DensResults}~\cite{GINE2002} treats a more general class of kernels $K$ and for example one could also include kernels of the form $K=\mathds{1}_{[-1,1]^d}$.
We now prove Theorem~\ref{thm:MainResults:DensResults:KDE}.

\begin{proof}[Proof of Theorem~\ref{thm:MainResults:DensResults:KDE}.]
We extend $\rho$ to the whole of $\bbR^d$ by setting $\rho(\bm{x}) = 0$ for all $\bm{x} \in \bbR^d\setminus \Omega$.
Now, for any $\Omega^\prime \subset\subset \Omega$,
\[ \| \rho_{n,h_n} - \rho \|_{L^\infty(\Omega^\prime)} \leq \| \rho_{n,h_n} - \bar{\rho}_{h_n} \|_{L^\infty(\Omega^\prime)} + \| \bar{\rho}_{h_n} - \rho \|_{L^\infty(\Omega^\prime)} \]
the first term on the RHS goes to zero by Theorem~\ref{thm:DensityEstimates:KDE:Gine02}.
For the second term we define
\[ \tilde{\Omega} = \lb \bm{x} \in \Omega \,:\, \inf_{\bm{z}\in\Omega^\prime} |\bm{x} - \bm{z} | \leq \frac12 \dH(\Omega^\prime,\Omega) \rb \]
where $\dH$ is the Hausdorff distance, and choose $\delta>0$.
Since $\rho$ is uniformly continuous on $\tilde{\Omega}$ there exists $R_\delta>0$ such that for all $\bm{x},\bm{z}\in \tilde{\Omega}$ with $|\bm{x} - \bm{z}|<R_\delta$ we have $|\rho(\bm{x}) - \rho(\bm{z}) |\leq \delta$.
Let $\spt(K) \subset B(0,M)$, and assume $n$ is large enough so that
\[ h_n \leq \frac{1}{M} \min\lb \frac12 \dH(\Omega^\prime,\Omega), R_\delta \rb. \]
Then, for any $x\in \Omega^\prime$,
\begin{align*}
| \bar{\rho}_{h_n}(\bm{x}) - \rho(\bm{x}) | & = \la \int_{\bbR^d} K_{h_n}(\bm{x} - \bm{z}) \l \rho(\bm{z}) - \rho(\bm{x}) \r \, \dd \bm{z} \ra \\
 & \leq \int_{\bbR^d} \la K_{h_n}(\bm{x} - \bm{z}) \ra |\rho(\bm{z}) - \rho(\bm{x})| \, \dd \bm{z} \\
 & \leq \delta \| K\|_{L^1(\bbR^d)}.
\end{align*}
Hence $\lim_{n\to\infty}\|\bar{\rho}_{h_n} - \rho \|_{L^\infty(\Omega^\prime)} \leq \delta \| K\|_{L^1(\bbR^d)}$.
Since $\delta>0$ is arbitrary we have shown $\lim_{n\to \infty} \|\bar{\rho}_{h_n} - \rho \|_{L^\infty(\Omega^\prime)} = 0$ as required.
\end{proof}

The following result allows us to extend the convergence to sets $\Omega_h$ where
\begin{equation} \label{eq:DensityEstimates:KDE:Omegah}
\Omega_h = \lb x\in \Omega \,:\, \dist(x,\partial\Omega)\geq h \rb.
\end{equation}

\begin{lemma}
\label{lem:DensityEstimates:UniformKDEOmegah}
If in addition to the assumptions in Theorem~\ref{thm:MainResults:DensResults:KDE} we assume that $\rho$ is Lipschitz continuous on $\Omega$, then
\[ \lim_{n\to \infty} \| \rho_{n,h_n} - \rho \|_{L^\infty(\Omega_{Mh_n})} = 0 \]
where $\Omega_h$ is defined by~\eqref{eq:DensityEstimates:KDE:Omegah}.
\end{lemma}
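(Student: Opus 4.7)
The plan is to decompose the error in the usual bias/variance fashion,
\[
\| \rho_{n,h_n} - \rho \|_{L^\infty(\Omega_{Mh_n})} \le \| \rho_{n,h_n} - \bar\rho_{h_n} \|_{L^\infty(\Omega_{Mh_n})} + \| \bar\rho_{h_n} - \rho \|_{L^\infty(\Omega_{Mh_n})},
\]
and control each term separately, extending $\rho$ by zero outside of $\Omega$ as in the proof of Theorem~\ref{thm:MainResults:DensResults:KDE}.

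For the first (variance) term, I would simply invoke Theorem~\ref{thm:DensityEstimates:KDE:Gine02}: it gives almost-sure convergence of $\| \rho_{n,h_n} - \bar\rho_{h_n} \|_{L^\infty(\bbR^d)}$ to zero (with the explicit rate $\sqrt{|\log h_n|/(n h_n^d)}$), so restricting the supremum to the smaller set $\Omega_{Mh_n}\subset\bbR^d$ immediately yields convergence almost surely. Note that unlike the previous proof we do not need to worry that $\Omega_{Mh_n}$ depends on $n$, since the Gin\'e--Guillon bound is uniform over all of $\bbR^d$.

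The key point, and where Lipschitz continuity of $\rho$ is used, is the bias term. For any $\bm{x}\in \Omega_{Mh_n}$, Assumption~\ref{AssumB4} ensures that $\spt(K_{h_n}(\bm{x}-\cdot))\subset B(\bm{x},Mh_n)\subset \Omega$. Since $K$ integrates to $1$, it follows that
\[
\bar\rho_{h_n}(\bm{x}) - \rho(\bm{x}) = \int_{B(\bm{x},Mh_n)} K_{h_n}(\bm{x}-\bm{z})\bigl(\rho(\bm{z}) - \rho(\bm{x})\bigr) \, \dd\bm{z}.
\]
Using Lipschitz continuity of $\rho$ (with constant $L$) and changing variables $\bm{w} = (\bm{x}-\bm{z})/h_n$ gives
\[
|\bar\rho_{h_n}(\bm{x}) - \rho(\bm{x})| \le L \int_{\bbR^d} |K_{h_n}(\bm{x}-\bm{z})||\bm{z}-\bm{x}| \, \dd \bm{z} = L h_n \int_{\bbR^d} |K(\bm{w})||\bm{w}| \, \dd \bm{w}.
\]
The integral on the right is finite by the integrability condition in Assumption~\ref{AssumB6}, and the bound is uniform in $\bm{x}\in \Omega_{Mh_n}$, so $\|\bar\rho_{h_n} - \rho\|_{L^\infty(\Omega_{Mh_n})} = O(h_n) \to 0$.

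Combining the two bounds gives the claim. The only real subtlety, and what makes this slightly stronger than Theorem~\ref{thm:MainResults:DensResults:KDE}, is observing that once $\bm{x}\in\Omega_{Mh_n}$ the kernel never ``sees'' the boundary of $\Omega$, so one gets the full interior bias bound $O(h_n)$ without needing a fixed compact subset $\Omega^\prime\subset\subset\Omega$.
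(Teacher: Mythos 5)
Your proposal is correct and follows essentially the same route as the paper's proof: the same bias/variance triangle-inequality split, Theorem~\ref{thm:DensityEstimates:KDE:Gine02} for the stochastic term (which is uniform over $\bbR^d$, so the $n$-dependence of $\Omega_{Mh_n}$ is harmless), and the Lipschitz bound $|\bar\rho_{h_n}(\bm{x})-\rho(\bm{x})|\le \Lip(\rho)\,h_n\int|K(\bm{w})|\|\bm{w}\|\,\dd\bm{w}$ for the bias. Your explicit observation that $\spt(K_{h_n}(\bm{x}-\cdot))\subset B(\bm{x},Mh_n)\subset\Omega$ for $\bm{x}\in\Omega_{Mh_n}$ — so the zero extension of $\rho$ never interferes with the Lipschitz estimate — is a point the paper's proof uses only implicitly, and is worth making.
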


\begin{proof}
Analogously to the proof of Theorem~\ref{thm:MainResults:DensResults:KDE} we have
\[ \| \rho_{n,h_n} - \rho \|_{L^\infty(\Omega_{Mh_n})} \leq \| \rho_{n,h} - \bar{\rho}_{h_n} \|_{L^\infty(\bbR^d)} + \| \bar{\rho}_{h_n} - \rho\|_{L^\infty(\Omega_{h_n})} \]
where the first term goes to zero by Theorem~\ref{thm:DensityEstimates:KDE:Gine02}.
The second term goes to zero uniformly by, for all $x\in \Omega_{Mh_n}$,
\begin{align*}
|\bar{\rho}_{h_n}(\bm{x}) - \rho(\bm{x})| & \leq \int_{\bbR^d} |K_{h_n}(\bm{x} - \bm{z})| |\rho(\bm{z}) - \rho(\bm{x})| \, \dd \bm{z} \\
 & \leq \Lip(\rho) h_n \int_{\bbR^d} |K(\bm{w})| \|\bm{w}\| \, \dd \bm{w}.
\end{align*}
Hence $\| \bar{\rho}_{h_n} - \rho\|_{L^\infty(\Omega_{h_n})}\to 0$ as required.
\end{proof}

\subsection{Convergence of the Spline Kernel Density Estimate} \label{subsec:DensityEstimates:SKDE}

Our method relies on the result of \cite{arcangeli93} (given below), where almost sure $H^m$ error estimates were constructed for multivariate spline functions from data with uncorrelated, centred noise with results from~\cite{UTRERAS1988}. Here we adapt the results to suit SKDE. By the linearity of the smoothing spline functional we can write
\[ \rho_{n,h,\lambda,T} = S_{\lambda,T}(\rho + v_{n,h} - \bar{v}_{n,h}) + S_{\lambda,T}(\bar{v}_{n,h}) \]
where
\begin{align*}
v_{n,h} & = \rho_{n,h} - \rho \\
\bar{v}_{n,h} & = \bbE v_n = \int_\Omega K(\bm{x}) \l \rho(\cdot-h\bm{x}) - \rho(\cdot) \r \, \dd \bm{x}.
\end{align*}
By the triangle inequality, for $\Omega^\prime\subset\subset\Omega$,
\[ \|\rho_{n,h,\lambda,T} - \rho\|_{H^m(\Omega^\prime)} \leq \| S_{\lambda,T}(\rho+v_{n,h} - \bar{v}_{n,h}) - \rho\|_{H^m(\Omega^\prime)} + \|S_{\lambda,T}(\bar{v}_{n,h})\|_{H^m(\Omega^\prime)}. \]
The first term on the right hand side can be bounded by following theorem found in~\cite[Theorem 4.1]{arcangeli93}.

\begin{theorem} \label{Theorem:SplineExactConv}
In addition to Assumptions~\ref{AssumB1},\ref{AssumB8}-\ref{AssumB11} on $\Omega,T_n,m$ and $\{\bm{t}_i\}_{i=1}^{T_n}$, assume $\delta_n \in \bbR^{T_n}$ are random variables satisfying $\bbE\delta_{n,i} = 0$, for all $i$, $\delta_{n,i}$ is independent of $\delta_{n,j}$ for all $i\neq j$, and
\[ \forall r\in \bbN, \quad \exists C = C(r) \quad \text{such that} \quad \forall n\in\bbN, \,\, \text{and} \,\, \forall i=1,\dots, T_n \quad \text{we have} \quad \bbE|\delta_{n,i}|^{2r} \leq C. \]
Define $S_{\lambda,T}$ by~\eqref{eq:SKDE} and $P_T$ by~\eqref{eq:MainResults:PT}.
Then, for $f\in H^m(\Omega)$,
\[ \lim_{n\to \infty} \| S_{\lambda_n,T_n}(P_{T_n}(f)+\delta_n) - f \|_{H^m(\Omega)} = 0 \]
with probability one.
\end{theorem}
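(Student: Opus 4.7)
By linearity of the smoothing spline operator $S_{\lambda,T}$ we can split
\[
S_{\lambda_n,T_n}(P_{T_n}(f) + \delta_n) - f \;=\; \bigl[S_{\lambda_n,T_n}(P_{T_n}(f)) - f\bigr] \;+\; S_{\lambda_n,T_n}(\delta_n),
\]
where the first (deterministic) term is the classical approximation error of the noiseless smoothing spline and the second (stochastic) term carries all the randomness. The plan is to show that each term goes to zero in $H^m(\Omega)$, the first deterministically under the design/parameter hypotheses and the second almost surely via moment bounds and Borel--Cantelli.

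\textbf{Deterministic term.} Here I would invoke the noiseless spline approximation theory of Utreras~\cite{UTRERAS1988}: under the uniform cone condition \ref{AssumB11}, the quasi-uniformity condition \ref{AssumB9} ($\dH(T_n) \to 0$ and $\dH(T_n) = O(\Sep(T_n))$) and the penalty scaling \ref{AssumB10}, one has
\[
\|S_{\lambda_n,T_n}(P_{T_n}(f)) - f\|_{H^m(\Omega)} \to 0 \qquad \text{for every } f\in H^m(\Omega).
\]
The proof strategy is standard: compare $S_{\lambda_n,T_n}(P_{T_n}(f))$ to $f$ using the variational definition of $S_{\lambda,T}$ (which gives $\|\nabla^m S_{\lambda,T}(P_T f)\|_{L^2}\le \|\nabla^m f\|_{L^2}$), combine with a discrete Poincar\'e-type inequality controlling $\|u\|_{L^2}$ by $T^{-1}\sum u(t_i)^2$ and $\lambda\|\nabla^m u\|_{L^2}^2$ (this is where $\dH(T_n),\Sep(T_n)$ enter), and then use density of smooth functions plus a diagonalisation over $f$.

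\textbf{Stochastic term.} The main obstacle is controlling $\|S_{\lambda_n,T_n}(\delta_n)\|_{H^m(\Omega)}$ almost surely. Since $S_{\lambda,T}$ is linear in the data, $g_n := S_{\lambda_n,T_n}(\delta_n)$ is the Gaussian/linear functional of $\delta_n$ determined by the normal equations. Testing $g_n$ against itself in the variational problem gives the basic energy identity
\[
\frac{1}{T_n}\sum_{i=1}^{T_n} g_n(\bm{t}_i)^2 + \lambda_n\|\nabla^m g_n\|_{L^2(\Omega)}^2 \;=\; \frac{1}{T_n}\sum_{i=1}^{T_n} \delta_{n,i}\, g_n(\bm{t}_i),
\]
which yields an $L^2$/energy bound in terms of $\delta_n$. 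Combining this with the sup-norm/$H^m$ equivalence on the space spanned by the spline basis (valid because $m>d/2$), I would derive a moment bound of the form
\[
\bbE \,\|S_{\lambda_n,T_n}(\delta_n)\|_{H^m(\Omega)}^{2r} \;\le\; \frac{C_r}{\bigl(T_n\,\lambda_n^{(2m+d)/(2m)}\bigr)^{r}},
\]
using independence and the uniform $2r$-th moment assumption on the $\delta_{n,i}$. Assumption \ref{AssumB10} says $T_n \lambda_n^{(2m+d)/(2m)} \gg n^{\theta}$, so Markov's inequality gives, for any $\eps>0$,
\[
\mathbb{P}\bigl(\|S_{\lambda_n,T_n}(\delta_n)\|_{H^m(\Omega)} > \eps\bigr) \;\le\; \frac{C_r}{\eps^{2r}\,n^{r\theta}},
\]
which is summable in $n$ for $r$ chosen large enough that $r\theta>1$. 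Borel--Cantelli then yields $\|S_{\lambda_n,T_n}(\delta_n)\|_{H^m(\Omega)}\to 0$ almost surely, completing the proof.

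\textbf{Main obstacle.} The delicate point is the moment bound in the stochastic step: turning the energy identity into a clean $H^m$-bound requires controlling pointwise values of $g_n$ by $\|\nabla^m g_n\|_{L^2}$ and a low-order seminorm, which in turn requires the Sobolev embedding $H^m \hookrightarrow C^0$ (hence the assumption $m>d/2$ in \ref{AssumB8}) together with the quasi-uniformity of the knots via \ref{AssumB9}. This is precisely the analysis carried out in~\cite{arcangeli93}, whose argument I would follow in detail.
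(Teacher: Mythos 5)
The paper does not actually prove this statement: it is imported verbatim as \cite[Theorem 4.1]{arcangeli93}, so there is no internal proof to compare against line by line. Your outline is, however, essentially a faithful reconstruction of the argument behind that cited result, and it is consistent with the machinery the paper itself uses later (in the proof of Theorem~\ref{thm:DensityEstimates:SKDE:HmConv}): the linear split into a noiseless approximation error $S_{\lambda_n,T_n}(P_{T_n}f)-f$ and a pure-noise term $S_{\lambda_n,T_n}(\delta_n)$; the variational/energy identity obtained by testing the optimality condition against the minimiser; the influence-matrix eigenvalue bounds of \cite{UTRERAS1988} giving $\tr(A_n)\lesssim \lambda_n^{-d/(2m)}$ and hence the $\bigl(T_n\lambda_n^{(2m+d)/(2m)}\bigr)^{-1}$ scaling; and the role of the $n^\theta$ lower bound in Assumption~\ref{AssumB10}, which exists precisely so that Markov plus Borel--Cantelli upgrades the moment bound to almost sure convergence. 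So the approach is correct and is the intended one.

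Two points deserve more care than your sketch gives them. First, for the deterministic term the discrete Poincar\'e/sampling inequality only controls norms up to order $m-1$ together with the $m$-th seminorm; to get convergence in the full $H^m(\Omega)$ norm you also need strong $L^2$ convergence of $\nabla^m S_{\lambda_n,T_n}(P_{T_n}f)$ to $\nabla^m f$, which follows from the standard argument combining $\limsup_n\|\nabla^m S_{\lambda_n,T_n}(P_{T_n}f)\|_{L^2}\le\|\nabla^m f\|_{L^2}$ (your energy comparison) with weak lower semicontinuity along a weakly convergent subsequence. Second, the higher-moment bound $\bbE\|S_{\lambda_n,T_n}(\delta_n)\|_{H^m}^{2r}\le C_r\bigl(T_n\lambda_n^{(2m+d)/(2m)}\bigr)^{-r}$ is not automatic from the $r=1$ trace computation; it requires a moment inequality for quadratic forms in independent variables with uniformly bounded $2r$-th moments (this is where the hypothesis $\bbE|\delta_{n,i}|^{2r}\le C(r)$ for \emph{all} $r$ is actually used). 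Both gaps are filled in \cite{arcangeli93}, which is exactly the source the paper defers to.
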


It is easy to check that for $\delta_n = P_{T_n}(v_{n,h_n} - \bar{v}_{n,h_n})$ that $\delta_{n,i}$ are independent whenever $\Sep(T_n)\geq 2M h_n$ where $\spt(K)\subset B(0,M)$.
Moreover,
\[ \bbE|\delta_{n,i}|^{2r} \leq 2^{2r} \bbE \|\rho_{n,h_n} - \rho\|_{L^\infty(\Omega_{h_n})}^{2r} \]
for $\{\bm{t}_i\}_{i=1}^T \subset \Omega_{Mh}$ and therefore, by Lemma~\ref{lem:DensityEstimates:UniformKDEOmegah}, the RHS converges to zero.
Hence, we can apply the above theorem to infer that $\| S_{\lambda_n,T_n}(\rho+v_{n,h_n} - \bar{v}_{n,h_n}) - \rho\|_{H^m(\Omega^\prime)} \to 0$ with probability one.

\begin{theorem}
\label{thm:DensityEstimates:SKDE:HmConv}
Under the conditions of Theorem~\ref{thm:MainResults:DensResults:SKDE} with probability one, for all $\Omega^\prime\subset\subset \Omega$, we have
\[ \|\rho_{n,h_n,\lambda_n,T_n} - \rho\|_{H^m(\Omega^\prime)} \to 0. \]
\end{theorem}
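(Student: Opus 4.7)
The decomposition displayed immediately before the theorem statement bounds $\|\rho_{n,h_n,\lambda_n,T_n} - \rho\|_{H^m(\Omega^\prime)}$ by a stochastic term and a deterministic \emph{bias} term $\|S_{\lambda_n,T_n}(P_{T_n}(\bar{v}_{n,h_n}))\|_{H^m(\Omega^\prime)}$. The stochastic term has already been shown to vanish almost surely by applying Theorem~\ref{Theorem:SplineExactConv} with $f=\rho$ and noise $\delta_n = P_{T_n}(v_{n,h_n}-\bar{v}_{n,h_n})$, so my plan concerns only the bias term, which is in fact deterministic because $\bar{v}_{n,h_n} = \bbE\rho_{n,h_n} - \rho$.

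First I would show that $\|\bar{v}_{n,h_n}\|_{H^m(\Omega^\prime)} \to 0$. For any $\bm{x}\in\Omega^\prime$ and $h_n$ small enough that $B(\bm{x},Mh_n)\subset\Omega$ (using the compact support of $K$ from Assumption~\ref{AssumB4}), we may write $\bar{v}_{n,h_n}(\bm{x}) = (K_{h_n}*\rho)(\bm{x}) - \rho(\bm{x})$. Since $\rho\in H^m(\Omega)$ by Assumption~\ref{AssumB13} and weak derivatives commute with convolution, for every multi-index $|\alpha|\leq m$ we have $\partial^\alpha \bar{v}_{n,h_n} = K_{h_n}*\partial^\alpha\rho - \partial^\alpha\rho \to 0$ in $L^2(\Omega^\prime)$ by standard mollification theory. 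Writing $w_n := S_{\lambda_n,T_n}(P_{T_n}(\bar{v}_{n,h_n}))$ and using $\bar{v}_{n,h_n}$ itself as a competitor in the minimisation~\eqref{eq:MainResults:SlambdaT}, minimality yields
\[ \frac{1}{T_n}\sum_{i=1}^{T_n}\bigl(w_n(\bm{t}_i)-\bar{v}_{n,h_n}(\bm{t}_i)\bigr)^2 + \lambda_n\|\nabla^m w_n\|_{L^2(\Omega)}^2 \leq \lambda_n\|\nabla^m\bar{v}_{n,h_n}\|_{L^2(\Omega)}^2, \]
so $u_n := w_n - \bar{v}_{n,h_n}$ satisfies both $\|\nabla^m u_n\|_{L^2(\Omega)}\to 0$ and $\frac{1}{T_n}\sum_{i=1}^{T_n} u_n(\bm{t}_i)^2\to 0$.

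To convert these into $\|u_n\|_{L^2(\Omega^\prime)}\to 0$---and hence, by the triangle inequality together with $\|\bar{v}_{n,h_n}\|_{L^2(\Omega^\prime)}\to 0$, the desired $\|w_n\|_{H^m(\Omega^\prime)}\to 0$---I would invoke a discrete-to-$L^2$ quasi-interpolation inequality from the spline-smoothing literature~\cite{UTRERAS1988,arcangeli93} of the form
\[ \|g\|_{L^2(\Omega^\prime)}^2 \leq C\Bigl( \frac{1}{T_n}\sum_{i=1}^{T_n} g(\bm{t}_i)^2 + \dH(T_n)^{2m}\|\nabla^m g\|_{L^2(\Omega)}^2 \Bigr), \qquad g\in H^m(\Omega), \]
valid under Assumptions~\ref{AssumB1}, \ref{AssumB8}, \ref{AssumB9}, \ref{AssumB11}. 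Since $\dH(T_n)\to 0$ by Assumption~\ref{AssumB9}, applying this with $g=u_n$ makes the right-hand side vanish and closes the argument. The main obstacle is precisely this last step: locating (or, if necessary, reproving under the present knot-distribution hypotheses) the appropriate quasi-interpolation bound; the competitor computation and the mollification step are routine by comparison.
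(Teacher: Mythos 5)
Your decomposition and your treatment of the stochastic term match the paper exactly, but your plan for the bias term $S_{\lambda_n,T_n}(P_{T_n}(\bar{v}_{n,h_n}))$ is genuinely different from the paper's: the paper works entirely with the discrete data $P_{T_n}(\bar{v}_{n,h_n})$ through the explicit matrix representation $A_n=(\Id+\lambda_nT_n\Gamma_n)^{-1}$ of the smoothing-spline operator, the eigenvalue asymptotics of $T_n\Gamma_n$ from \cite[Theorem~5.3]{UTRERAS1988}, and the scaling $T_n\lambda_n^{(2m+d)/(2m)}\to\infty$ of Assumption~\ref{AssumB10}, whereas you propose a soft variational competitor argument. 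Incidentally, the step you flag as the main obstacle is not one: the discrete-to-Sobolev inequality you want is precisely \cite[Theorem~3.4]{UTRERAS1988}, the very inequality the paper invokes at the end of its own proof.

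The genuine gap is in the competitor step. Using $\bar{v}_{n,h_n}$ as competitor in~\eqref{eq:MainResults:SlambdaT} produces the bound $\lambda_n\|\nabla^m\bar{v}_{n,h_n}\|_{L^2(\Omega)}^2$ with the seminorm taken over \emph{all} of $\Omega$, while your mollification argument only controls $\bar{v}_{n,h_n}$ on compact subsets $\Omega^\prime\subset\subset\Omega$. Near $\partial\Omega$ the identity $\partial^\alpha(K_{h_n}\ast\rho)=K_{h_n}\ast\partial^\alpha\rho$ fails: the convolution is really against $\rho\mathds{1}_\Omega$, which is not in $H^m(\bbR^d)$ since $\rho$ is bounded below, so on the boundary layer of width $Mh_n$ one only has the pointwise bound $|\nabla^m(K_{h_n}\ast(\rho\mathds{1}_\Omega))|\leq Ch_n^{-m}$, and this is generically saturated; hence $\|\nabla^m\bar{v}_{n,h_n}\|_{L^2(\Omega)}^2$ grows like $h_n^{1-2m}\to\infty$. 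Your argument therefore needs $\lambda_nh_n^{1-2m}\to 0$, which does not follow from the hypotheses: Assumption~\ref{AssumB10} bounds $\lambda_n$ from \emph{below}, Assumption~\ref{AssumB5} bounds $h_n$ from below, and the two parameters are never coupled. (A secondary problem: Assumption~\ref{AssumB4} permits non-smooth kernels such as $K=\mathds{1}_{[-1,1]^d}$, for which $K_{h_n}\ast\rho$ need not even lie in $H^m(\Omega)$ when $m\geq2$, so the competitor may be inadmissible.) The paper's matrix argument avoids all of this because it touches $\bar{v}_{n,h_n}$ only through its values at the knots $\{\bm{t}_i\}\subset\Omega_{Mh_n}$, which stay clear of the boundary layer; to rescue your route you would need a competitor agreeing with $\bar{v}_{n,h_n}$ at the knots but with controlled global $H^m$ norm, or an extra assumption coupling $\lambda_n$ to $h_n$.
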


\begin{proof}
By the preceding argument and Theorem~\ref{Theorem:SplineExactConv} it is enough to show that $\|S_{\lambda_n,T_n}(\bar{v}_{n,h_n})\|_{H^m(\Omega^\prime)}\to 0$.
Let $\gamma_n = S_{\lambda_n,T_n}(\bar{v}_{n,h_n})$ and $\Gamma_n$ be the $T_n\times T_n$ matrix satisfying
\[ \bm{z}^\top \Gamma_n \bm{z} = \min\lb \|\nabla^m u\|_{L^2(\Omega)}^2 \,:\, u\in H^m(\Omega), u(\bm{t}_i) = z_i \, \forall i=1,\dots, T_n \rb. \]
One has, see~\cite[Section 5]{UTRERAS1988},
\begin{align*}
\frac{1}{T_n} \sum_{i=1}^{T_n} |\gamma_n(\bm{t}_i)|^2 & = \frac{1}{T_n} \bar{v}_{n,h_n}^\top A_n^2 \bar{v}_{n,h} \leq \frac{1}{T_n} \|\bar{v}_{n,h_n}\|_{L^\infty(\Omega_{Mh_n})}^2 \tr(A_n^2) \\
\|\nabla^m \gamma_n\|_{L^2(\Omega)} & = \frac{1}{T_n\lambda_n} \bar{v}_{n,h_n}^\top (A_n - A_n^2) \bar{v}_{n,h_n} \leq \frac{1}{T_n\lambda_n} \| \bar{v}_{n,h_n}\|_{L^\infty(\Omega_{n,h_n})}^2 \tr(A_n) 
\end{align*}
where $A_n = (\Id + \lambda_n T_n \Gamma_n)^{-1}$.
By~\cite[Theorem 5.3]{UTRERAS1988} there exists $C_1>0$, $C_2>0$ and $M>0$ such that
\[ \alpha_i^{(n)} = 0 \quad \forall i=1,\dots, M \qquad \text{and} \qquad C_1 i^{\frac{2m}{d}} \leq \alpha_i^{(n)} \leq C_2 i^{\frac{2m}{d}} \quad \forall i=M+1,\dots, T_n \]
where $\alpha_i^{(n)}$ are the ordered eigenvalues of $T_n\Gamma_n$.
Hence,
\begin{align*}
\tr(A_n^2) & = \sum_{i=1}^{T_n} \frac{1}{(\alpha_i^{(n)}\lambda_n + 1)^2} \\
 &\leq M + \sum_{i=M+1}^{T_n} \frac{1}{(C_1 i^{\frac{2m}{d}}\lambda_n +1)^2} \\
 & \leq M + \int_M^{T_n} \frac{1}{(C_1 t^{\frac{2m}{d}}+1)^2} \, \dd t \\
 & \leq M + \frac{1}{(C_1\lambda_n)^{\frac{d}{2m}}} \int_0^\infty \frac{1}{(s^{\frac{2m}{d}}+1)^2} \, \dd s
\end{align*}
and similarly,
\[ \tr(A_n) \leq M + \frac{1}{(C_1\lambda_n)^{\frac{d}{2m}}} \int_0^\infty \frac{1}{s^{\frac{2m}{d}}+1} \, \dd s. \]
So,
\begin{align*}
\frac{1}{T_n} \sum_{i=1}^{T_n} |\gamma_n(\bm{t}_i)|^2 & \leq \frac{\tilde{C}}{T_n\lambda_n^{\frac{d}{2m}}} \|\bar{v}_{n,h_n}\|_{L^\infty(\Omega_{Mh_n})}^2 \\
\|\nabla^m \gamma_n\|_{L^2(\Omega)} & \leq \frac{\tilde{C}}{T_n\lambda_n^{1+\frac{d}{2m}}} \| \bar{v}_{n,h_n}\|_{L^\infty(\Omega_{n,h_n})}^2. 
\end{align*}
By~\cite[Theorem 3.4]{UTRERAS1988} we can bound
\[ \|\gamma_n\|_{H^m(\Omega^\prime)}^2 \leq C^\prime \l \frac{1}{T_n} \sum_{i=1}^{T_n} |\gamma_n(\bm{t}_i)|^2 + \|\nabla^m\gamma_n\|_{L^2(\Omega^\prime)}^2 \r \leq \frac{\hat{C} \|\bar{v}_{n,h_n}\|_{L^\infty(\Omega_{Mh_n})}^2}{T_n\lambda_n^{\frac{2m+d}{2m}}}. \]
Hence, $\gamma_n\to 0$ in $H^m(\Omega^\prime)$ as required.
\end{proof}

The proof of Theorem~\ref{thm:MainResults:DensResults:SKDE} is now, due to Sobolev embeddings (in particular Morrey's inequality), just a corollary of the above theorem since $m>d/2$.

\section{Numerical Experiments} \label{sec:Numerics}

To provide evidence of the convergence results stated in this paper, we consider numerical examples of the KDE and SKDE, construct methods to determine the minimisers of different $p$-Dirichlet energies, and provide example computations and error estimates.
We compare our results in terms of computation time with~\cite{floresrios19AAA}.
By approximating the densities and discretising on a coarser grid we introduce another source of error which could be significant for small data sizes, hence our method is not state-of-the-art in the small data regime.
On the other hand, discrete based methods fairly quickly become computationally infeasible whereas the continuum limit based numerical method controls the computational cost allowing one to apply the method to very large datasets; this is the regime where our approach is state-of-the-art.

\subsection{Setup} \label{subsec:Numerics:Setup}
We consider the domain $\Omega = [0,1]\times[0,1]$, and sample from three different densities:
\begin{align*}
\rho_1(x,y) &= 1,\\
\rho_2(x,y) &= \frac{1}{\mathcal{N}_2}(xy+0.2),\\
\rho_3(x,y) &= \frac{1}{\mathcal{N}_3}(\cos\big(6\pi((x-0.5)^2+(y-0.2)^2)\big)/3 + 0.5),
\end{align*}
where $\mathcal{N}_2,\mathcal{N}_3$ are normalisation constants.
The densities are plotted in Figure~\ref{fig:Numerics:Densities}.

\begin{figure}[ht!]
\centering
\subfloat[][]{\includegraphics[width=0.30\textwidth]{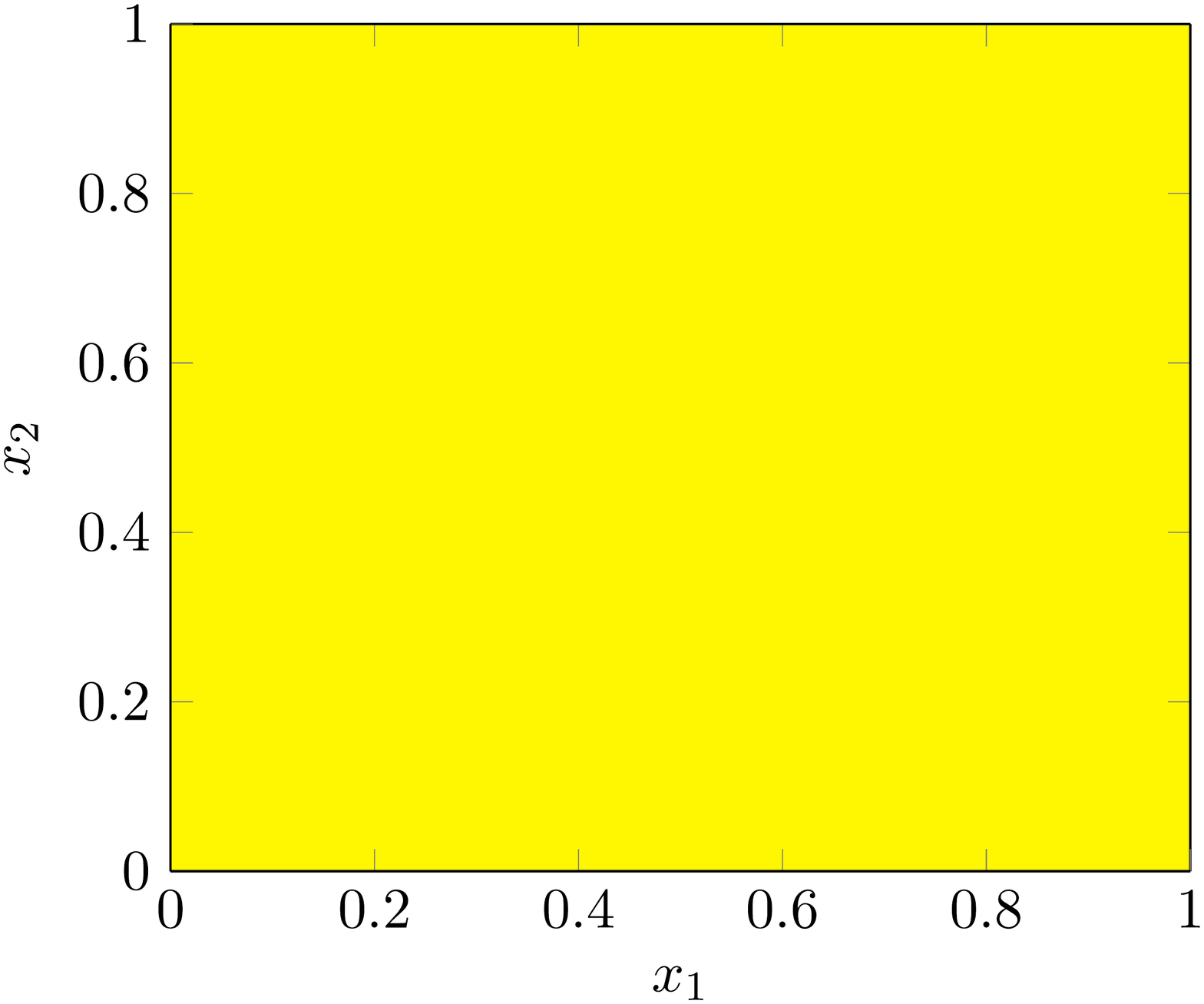}\label{Figure:UniformDensity}}
 \subfloat[][]{\includegraphics[width=0.30\textwidth]{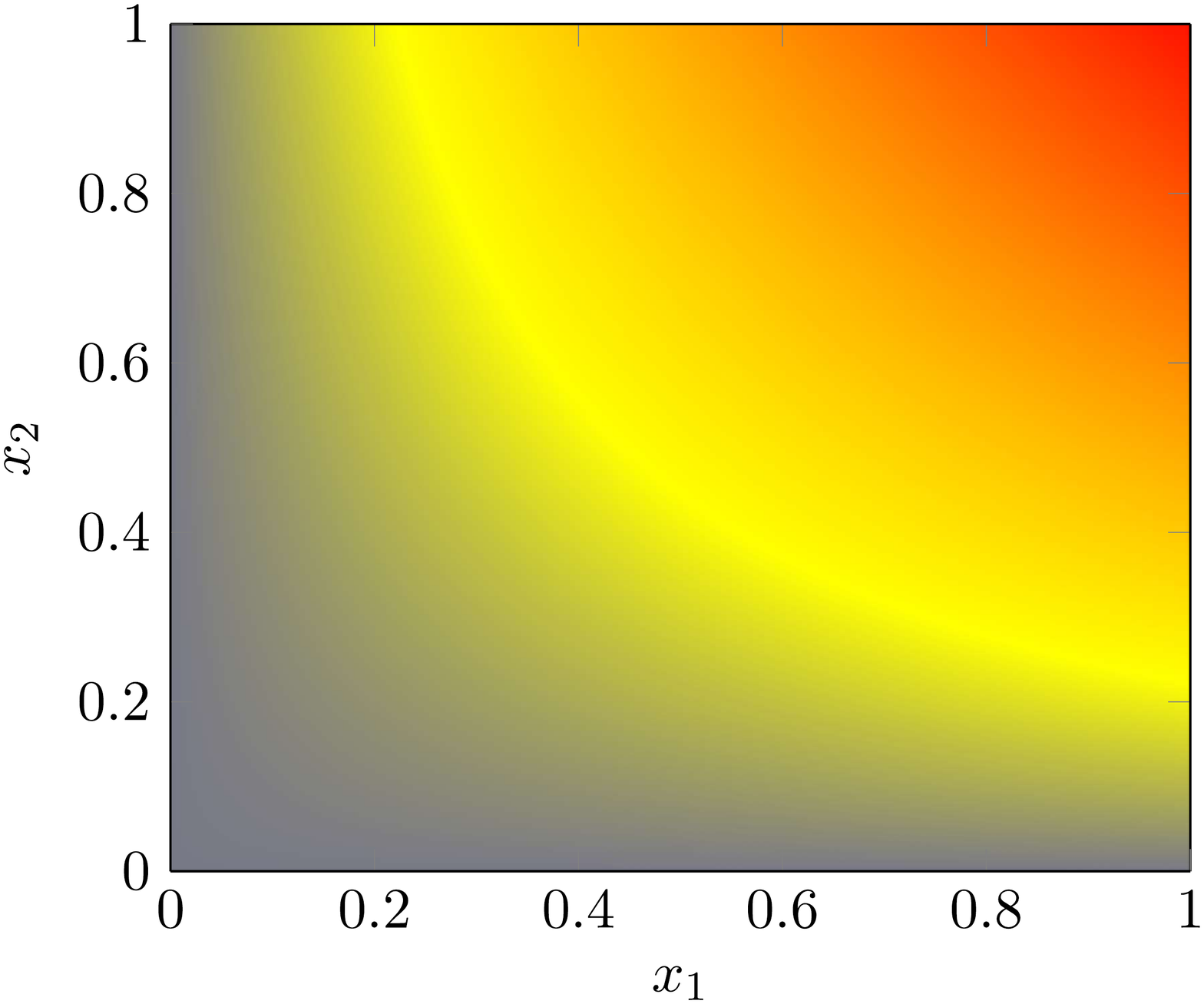}\label{Figure:xyDensity}}	
\subfloat[][]{\includegraphics[width=0.30\textwidth]{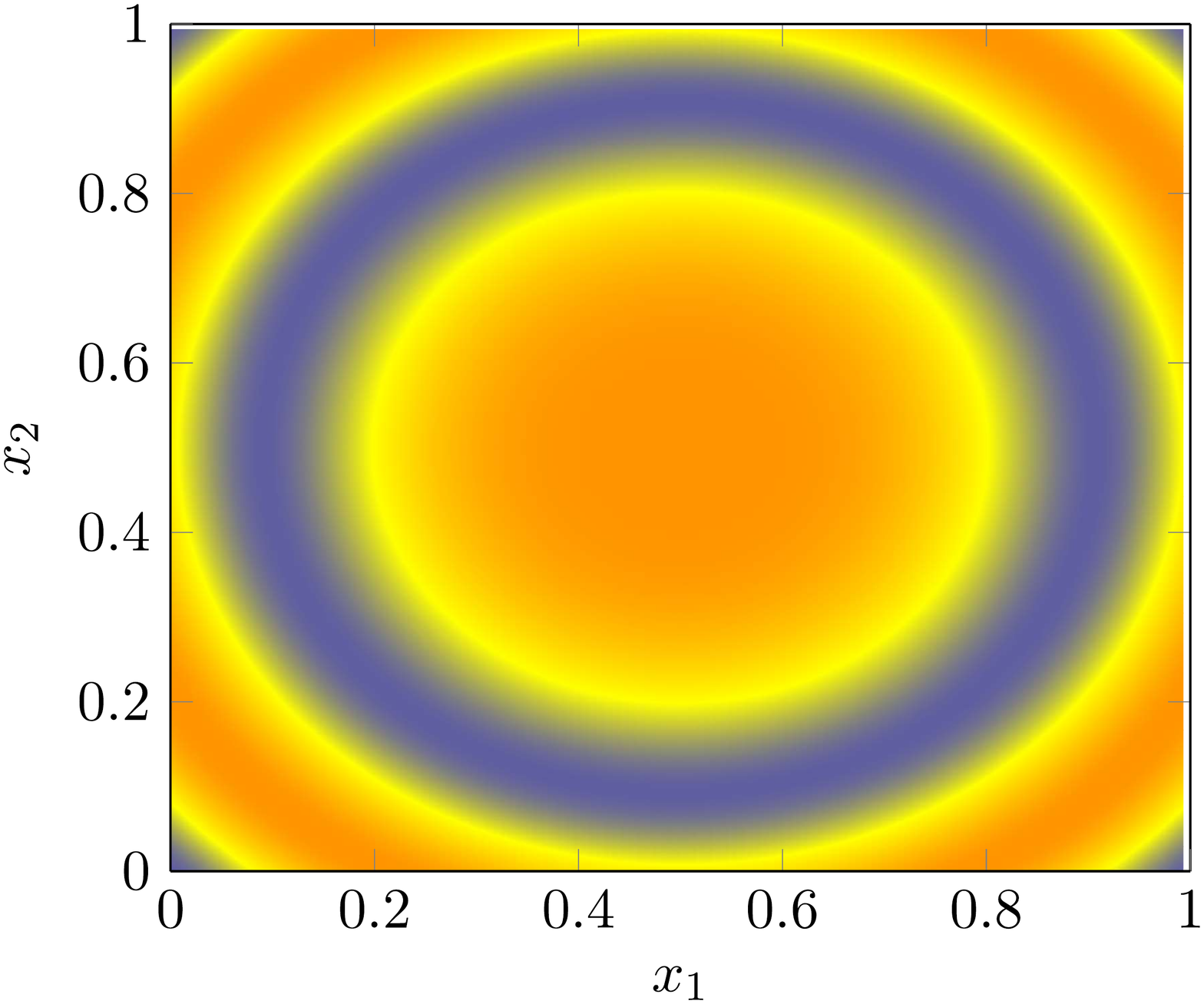}\label{Figure:TrigDensity}}
\hspace{0.02\textwidth}
\subfloat{\includegraphics[width=0.07\textwidth]{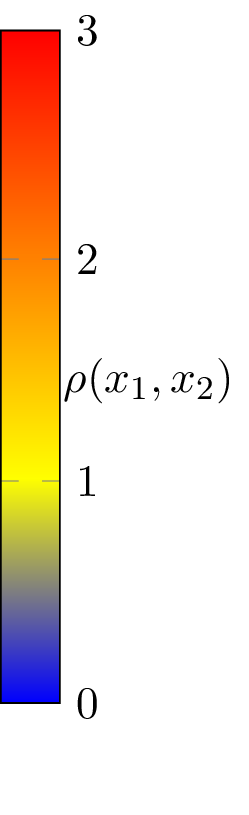}\label{Figure:DensityColorbar}}
\addtocounter{subfigure}{-1}
\\
\caption{Three densities considered in the examples. Left: $\rho_1$, centre: $\rho_2$, right: $\rho_3$.
}\label{fig:Numerics:Densities}
\end{figure}

For simplicity, we position 16 constraints uniformly across the domain, and labels are given using the formula:
\[	C(x,y) = 4\left(x-\frac{1}{2}\right)^2 + \left(y-\frac{1}{2}\right)^2. \]
The constraints are presented graphically in Figure~\ref{Figure:Constraints}.

\begin{figure}[ht!]
	\centering
	\includegraphics[width=0.5\textwidth]{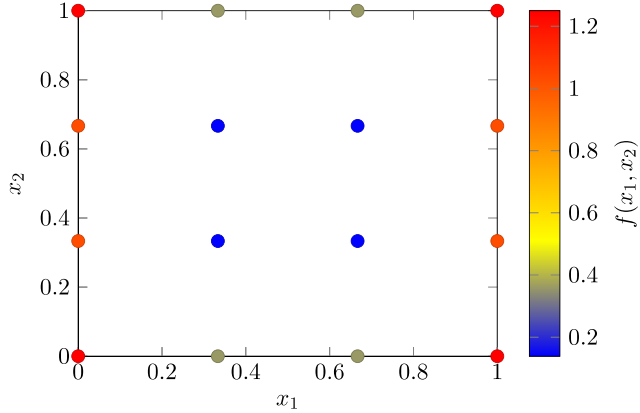}
	\caption{The position and value of the constraints used in each example.}\label{Figure:Constraints}	
\end{figure}

Code is based on the pseudo-spectral code base \texttt{2DChebClass}, \cite{DDFTCode}. The numerical methods used in this paper also rely on boundary patching methods. A version of \texttt{2DChebClass} which includes boundary patching and $p$-Dirichlet minimization is available upon request.

\subsection{Density Estimation}\label{subsec:Numerics:DensityEstimates}
\subsubsection{Numerical Method}
For the numerical results of the density estimate, we discretise the domain $\Omega$ uniformly with $D=2^{10}$ evenly spaced grid points in each dimension and consider
\[	\tilde{\Omega}_D = \{(x_i,y_j), x_i = \frac{i}{D-1}, y_j = \frac{j}{D-1}, i,j=0,\dots,(D-1)\}. \]
We choose $D$ large, so that discretization errors are small.
We sample from non-uniform densities using the \texttt{MATLAB} function \texttt{pinky} \cite{PinkyFile}. For the density estimates we consider two measures of error, given a density estimate $\rho_n$, we consider the $L^2$ error:
\[ \|\rho_n - \rho\|_{L^2(\Omega^\prime)}^2 = \int_{\Omega^\prime}|\rho_n(\bm{x}) - \rho(\bm{x})|^2\diff\bm{x}
\approx \frac{1}{D^2}\sum_{i,j=0}^{D-1} \mathds{1}_{(x_i,y_j)\in \Omega^\prime} |\rho_n(x_i,y_j) - \rho(x_i,y_j)|^2, \]
and the $L^\infty$ error:
\[	\|\rho_n - \rho\|_{L^\infty(\Omega^\prime)} = \sup_{\bm{x}\in\Omega^\prime}|\rho_n(\bm{x}) - \rho(\bm{x})|
	\approx \sup_{(x,y)\in\tilde{\Omega}_D\cap \Omega^\prime}|\rho_n(x,y) - \rho(x,y)|. \]
As we are only interested in the local approximation to the density, we construct the $L^{\infty}$ and $L^2$ errors on $\Omega^\prime = [0.01,0.99]\times[0.01,0.99]$. Kernel density estimates and smoothing splines are well studied, so we can utilise built-in functions in \texttt{MATLAB} in our calculations. We construct the KDE using the built-in function \texttt{mvksdensity} using Gaussian kernels (which are kernel functions of order 2). To construct the SKDE we use the built-in function \texttt{spaps}. For simplicity, we take the knots $\{t_i\}_{i=1,\dots,T}$ to be evenly spaced across the domain. An example of samples from $\rho_2$ and the associated KDE and SKDE is given in Figure~\ref{Figures:DensityEstimateExamples}.
\begin{figure}[ht!]
	\centering
	\subfloat[][]{\includegraphics[width=0.3\textwidth]{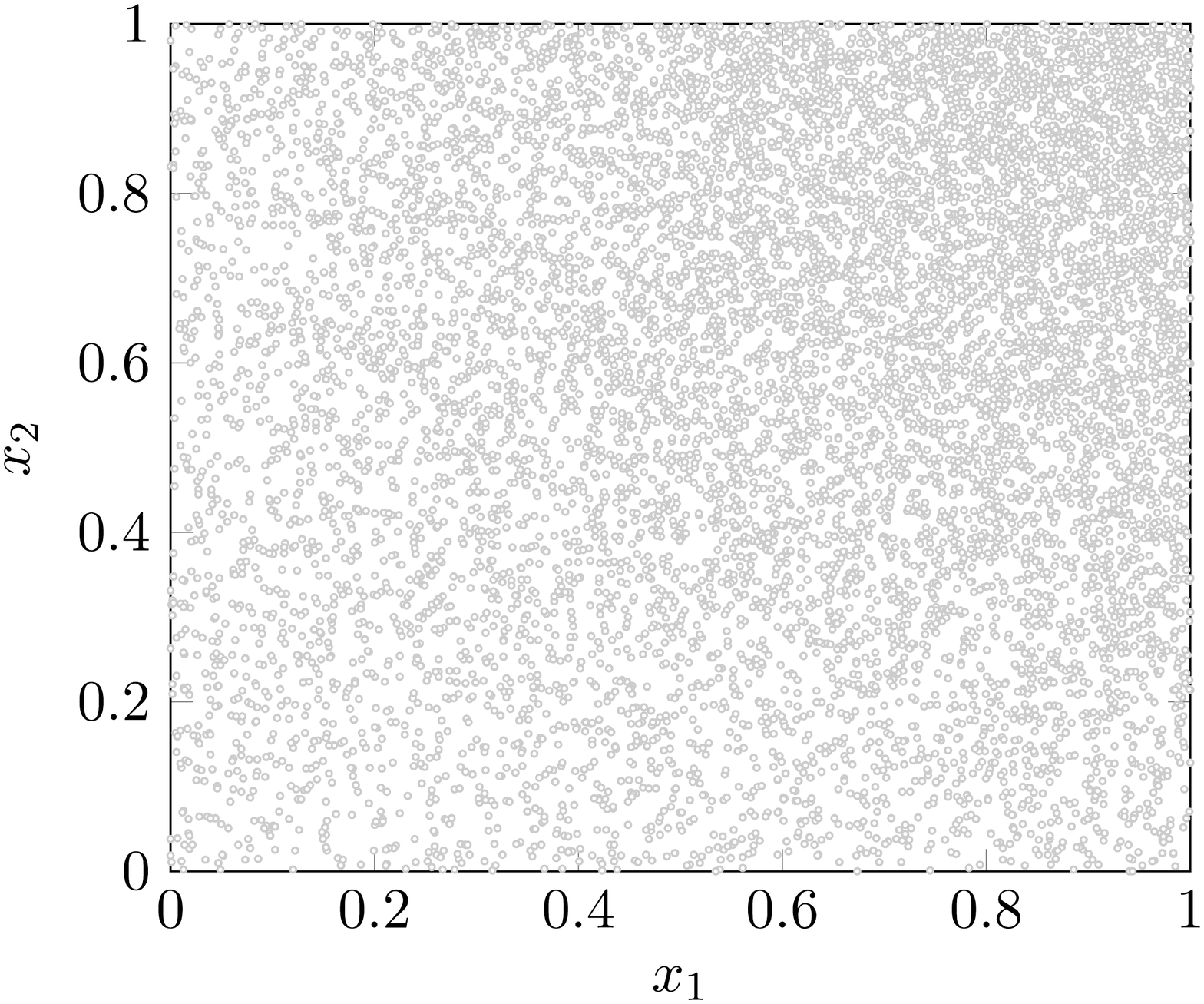}\label{Figure:Samples}}
	\subfloat[][]{\includegraphics[width=0.3\textwidth]{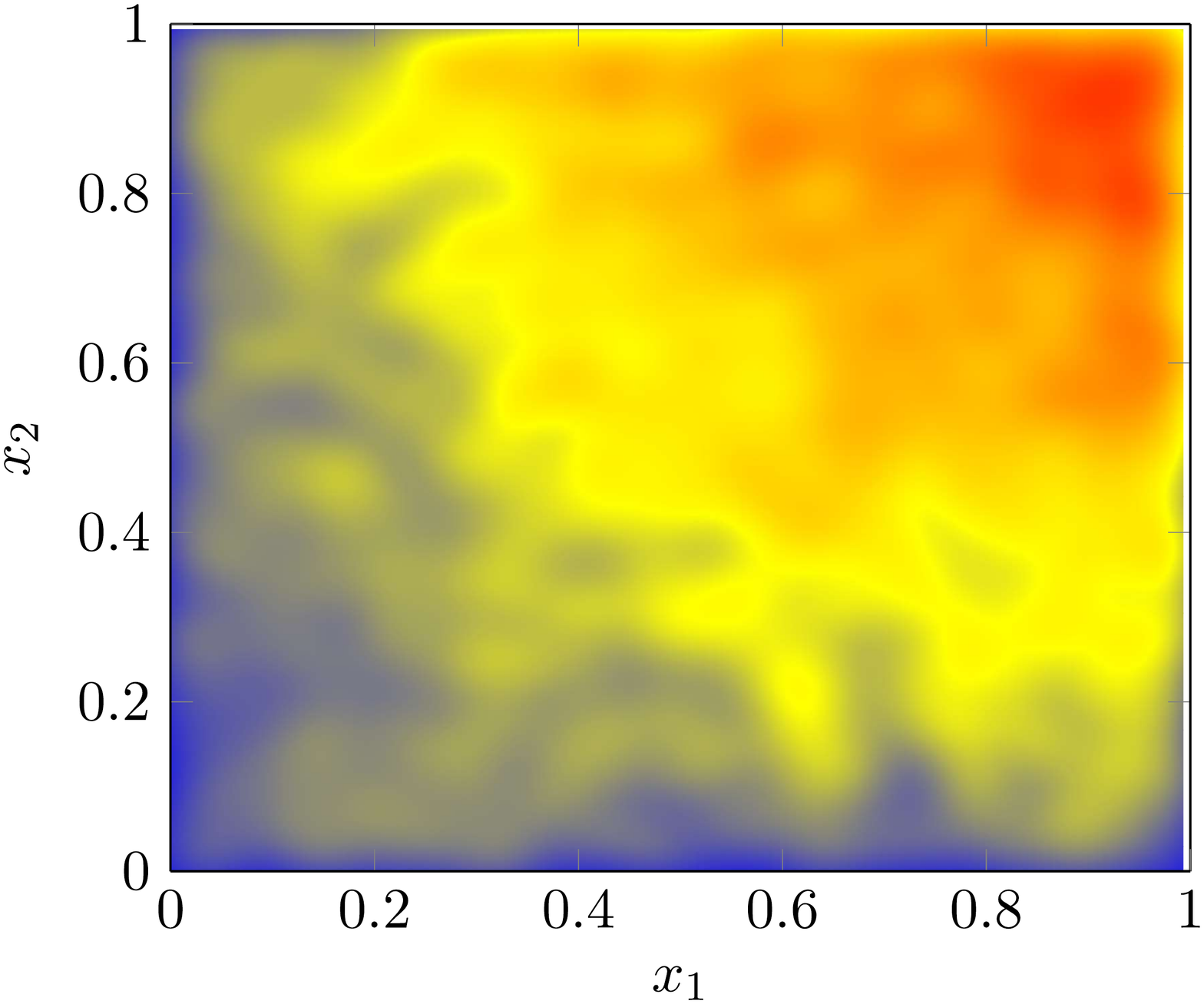}\label{Figure:KDEExample}}
	\subfloat[][]{\includegraphics[width=0.3\textwidth]{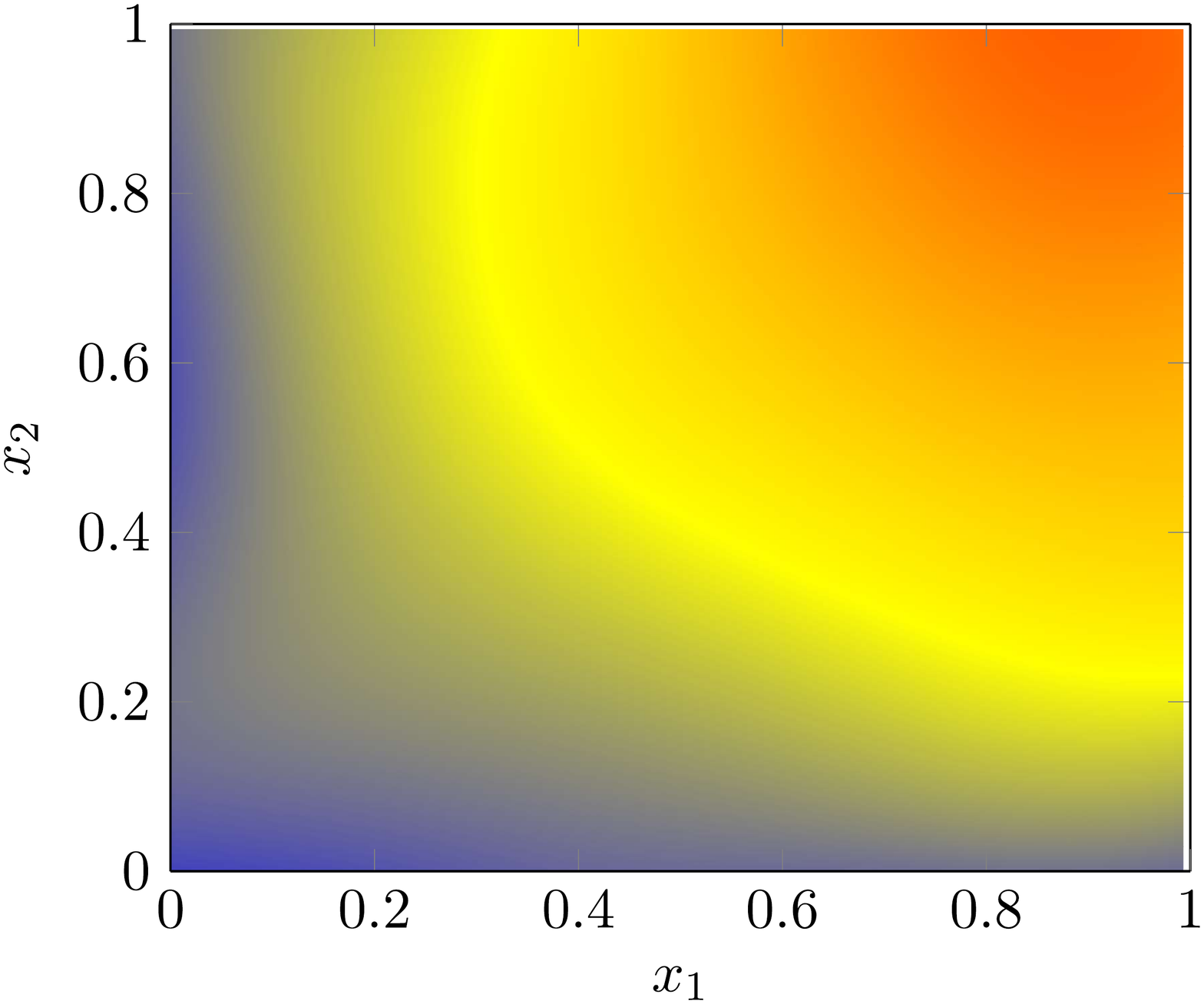}\label{Figure:SKDEExample}}
	\hspace{0.02\textwidth}
	\subfloat{\includegraphics[width=0.07\textwidth]{Figs/GenerateFigures-figure3.png}\label{Figure:DensityColors}}
	\addtocounter{subfigure}{-1}
	\caption{Left:10000 samples from $\rho_2$ using \texttt{pinky}, centre: the KDE with $h=0.03$ using \texttt{mvksdensity}, right: the SKDE with $h=0.03,\lambda=10^{-6},T=2^{12}$ using \texttt{spaps}. Density estimates are calculated on a mesh of $2^{10}\times 2^{10}$ points.}\label{Figures:DensityEstimateExamples}			
\end{figure}

\subsubsection{Results and Discussion}

We present the $L^{\infty}$ errors for the density and the derivative in Figure~\ref{Figures:LInfErrors} and Figure~\ref{Figures:dLInfErrors}.
We take $T=2^{12}$ and $\lambda=10^{-6}$.
For simplicity $T$ and $\lambda$ remain fixed in the experiments.

Figures~\ref{Figures:LInfErrors} and~\ref{Figures:dLInfErrors} show that the SKDE does no worse that the KDE, and often performs better, in terms of $L^\infty$ error. Occasionally, we see that the $L^{\infty}$ error for the SKDE is greater than the KDE.
This may be because of larger fluctuations in the derivatives, where the SKDE over-smooths the KDE. For all three densities, the KDE density error has an optimal choice of bandwidth $h$, as predicted by the theory.

Although in some cases the improved approximation due to smoothing splines is small, they also provide additional robustness in the choice of bandwidth $h$, with very little additional computation cost. We present the computation time for the KDE and SKDE in Figure~\ref{Figures:DensityEstimateTime}. The computation time for different densities is almost identical, and the inclusion of a smoothing spline approximation is negligible in cost. We note that the computational cost of kernel density estimation can also be significantly reduced using parallelisation.

\begin{figure}[ht!]
	\centering
	\subfloat[][]{\includegraphics[width=0.3\textwidth]{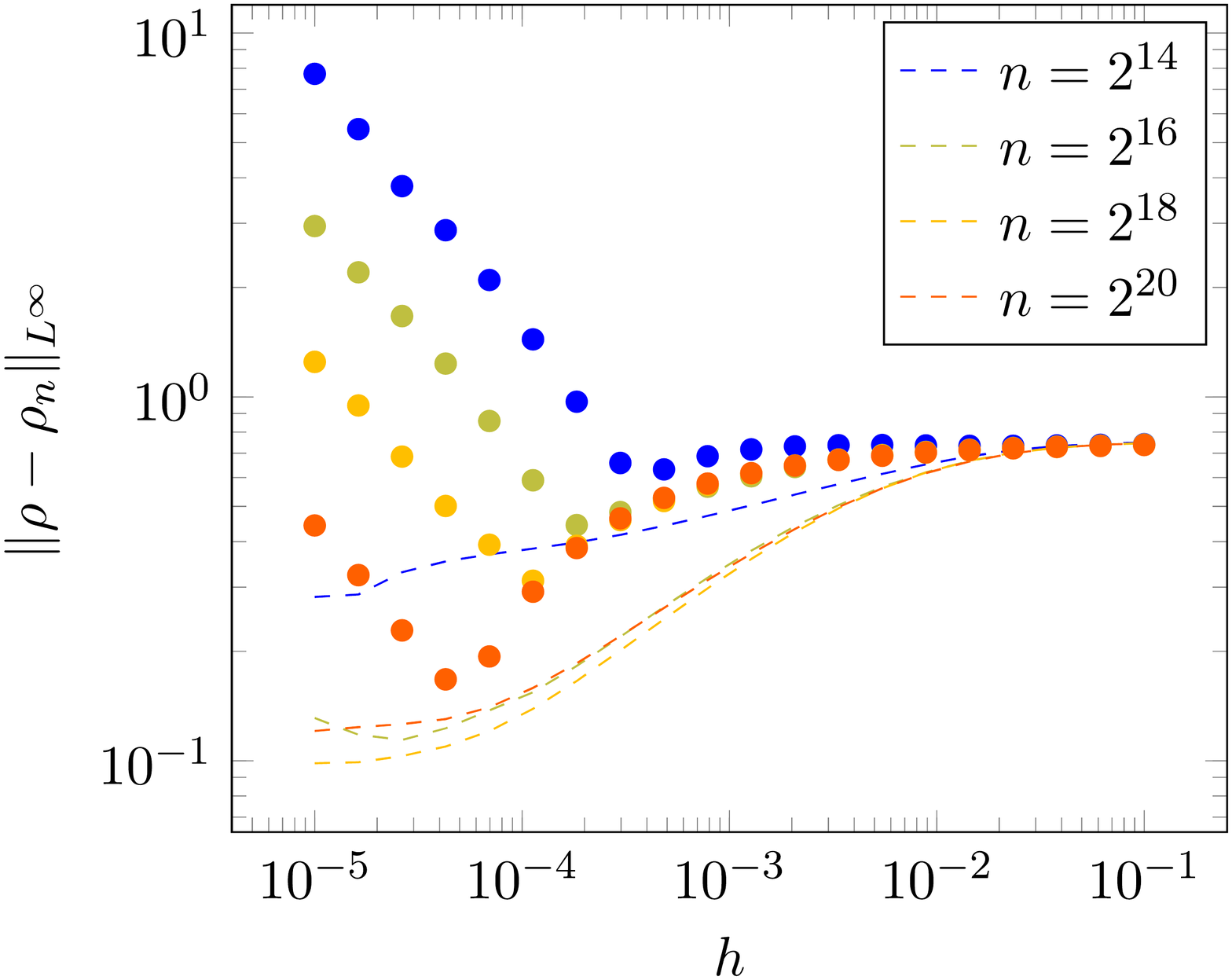}\label{Figure:UniformLInfError}}
	\subfloat[][]{\includegraphics[width=0.3\textwidth]{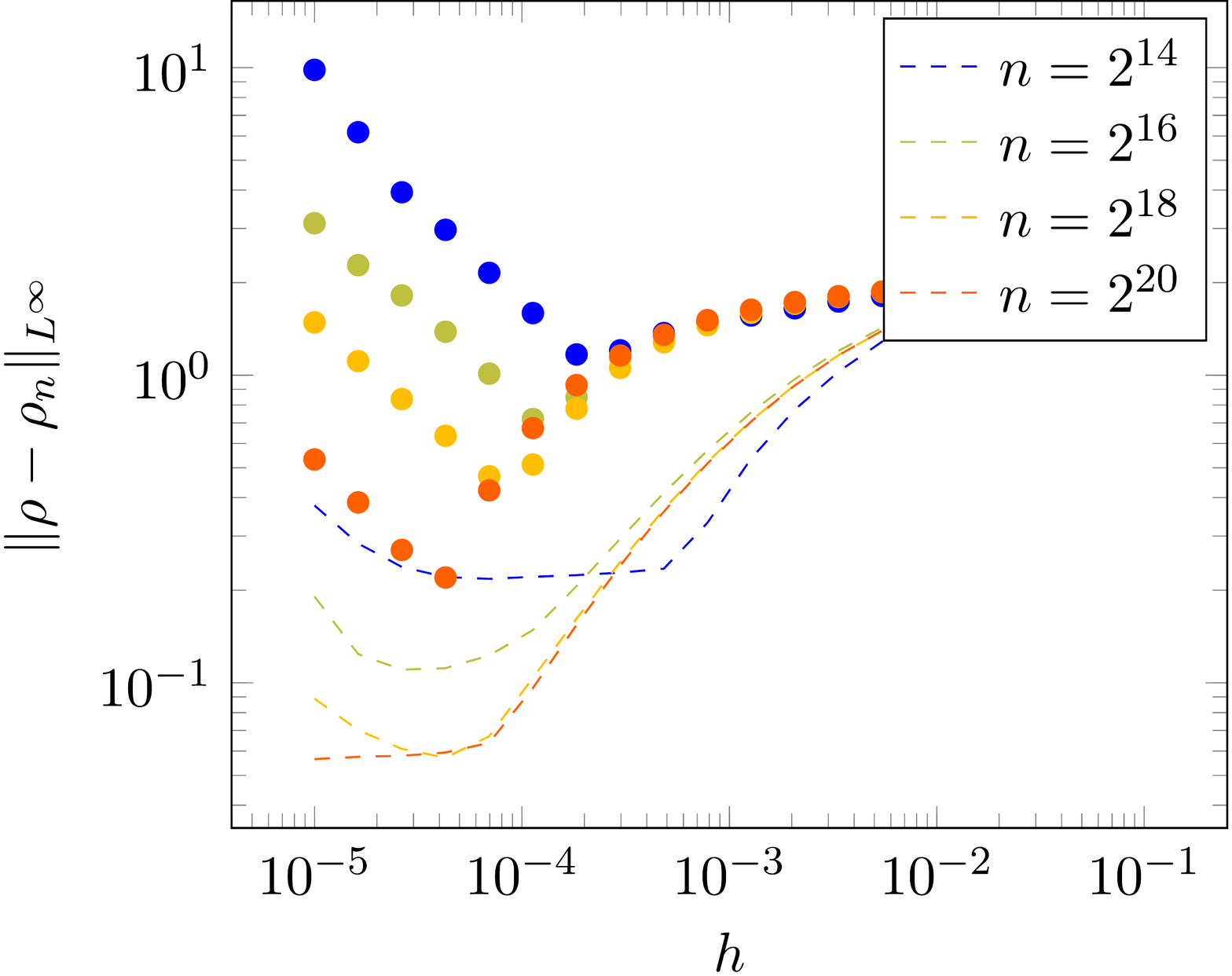}\label{Figure:xyLInfError}}
	\subfloat[][]{\includegraphics[width=0.3\textwidth]{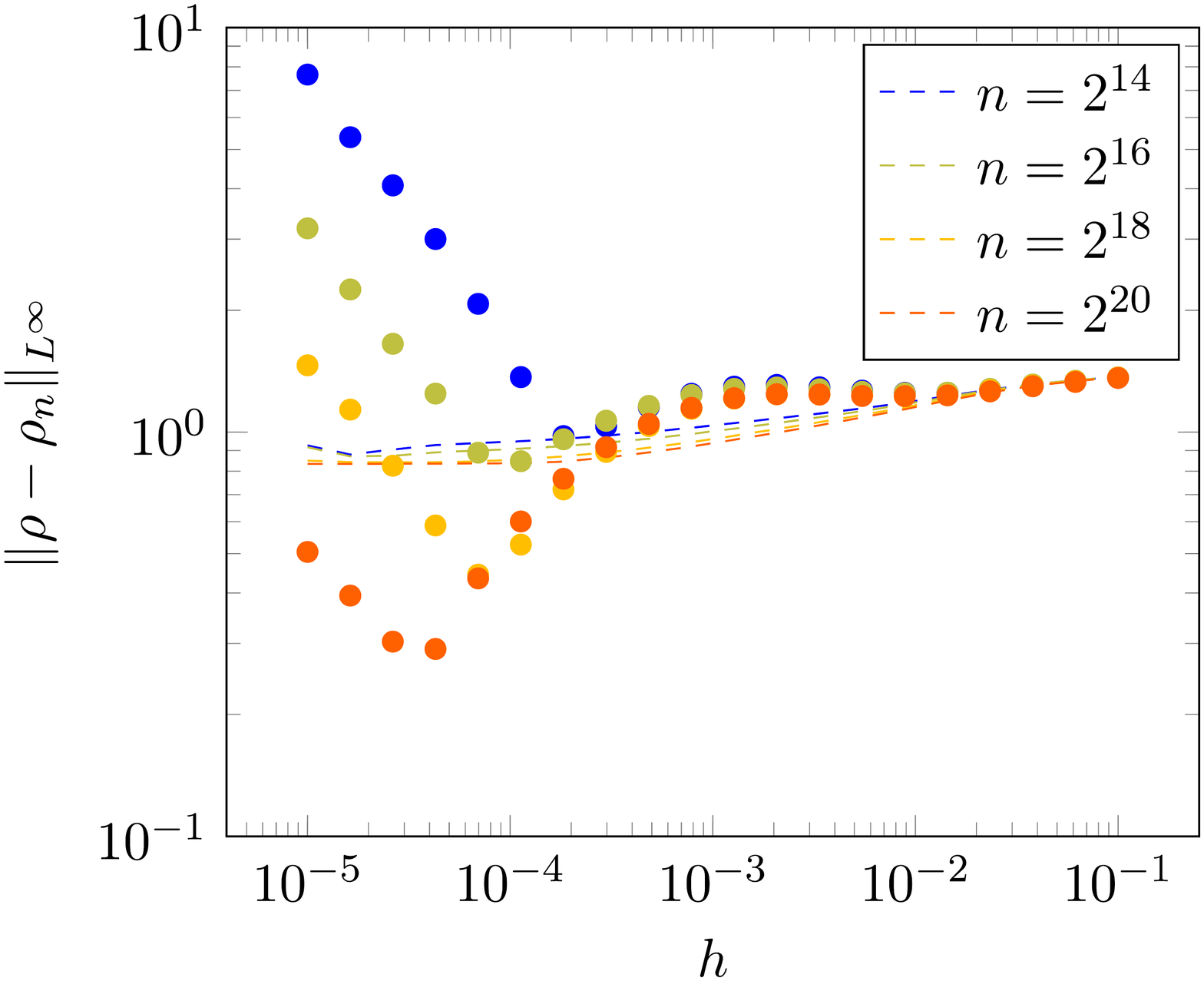}\label{Figure:TrigLInfError}}
	\caption{$L^{\infty}$ errors for the three densities using the two density estimates. Dotted lines represent the KDE, while dashed lines represent the SKDE. Left: $\rho_1$, centre: $\rho_2$, right: $\rho_3$. }\label{Figures:LInfErrors}	
\end{figure}	
	
\begin{figure}
	\centering
	\subfloat[][]{\includegraphics[width=0.3\textwidth]{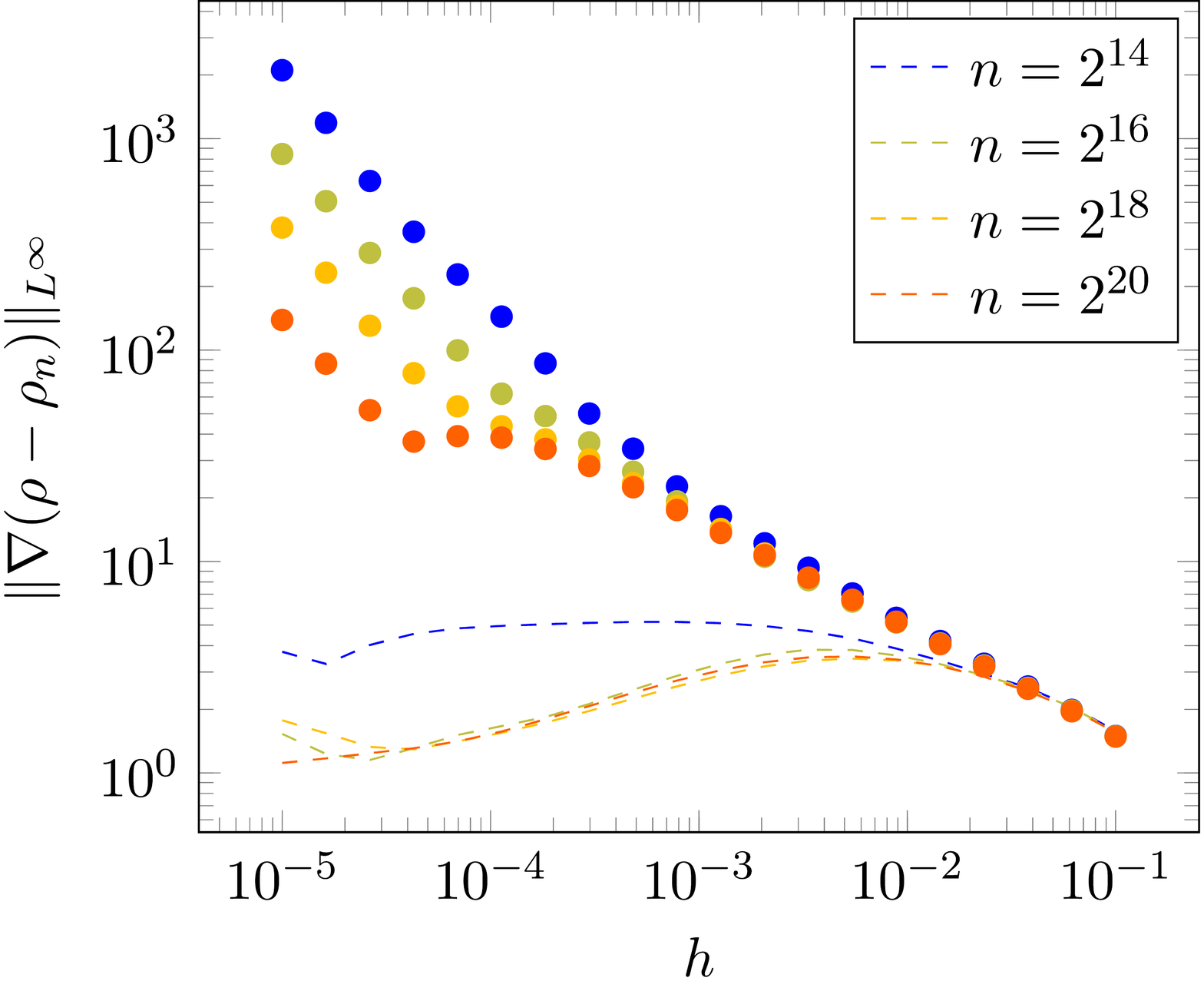}\label{Figure:UniformdLInfError}}
	\subfloat[][]{\includegraphics[width=0.3\textwidth]{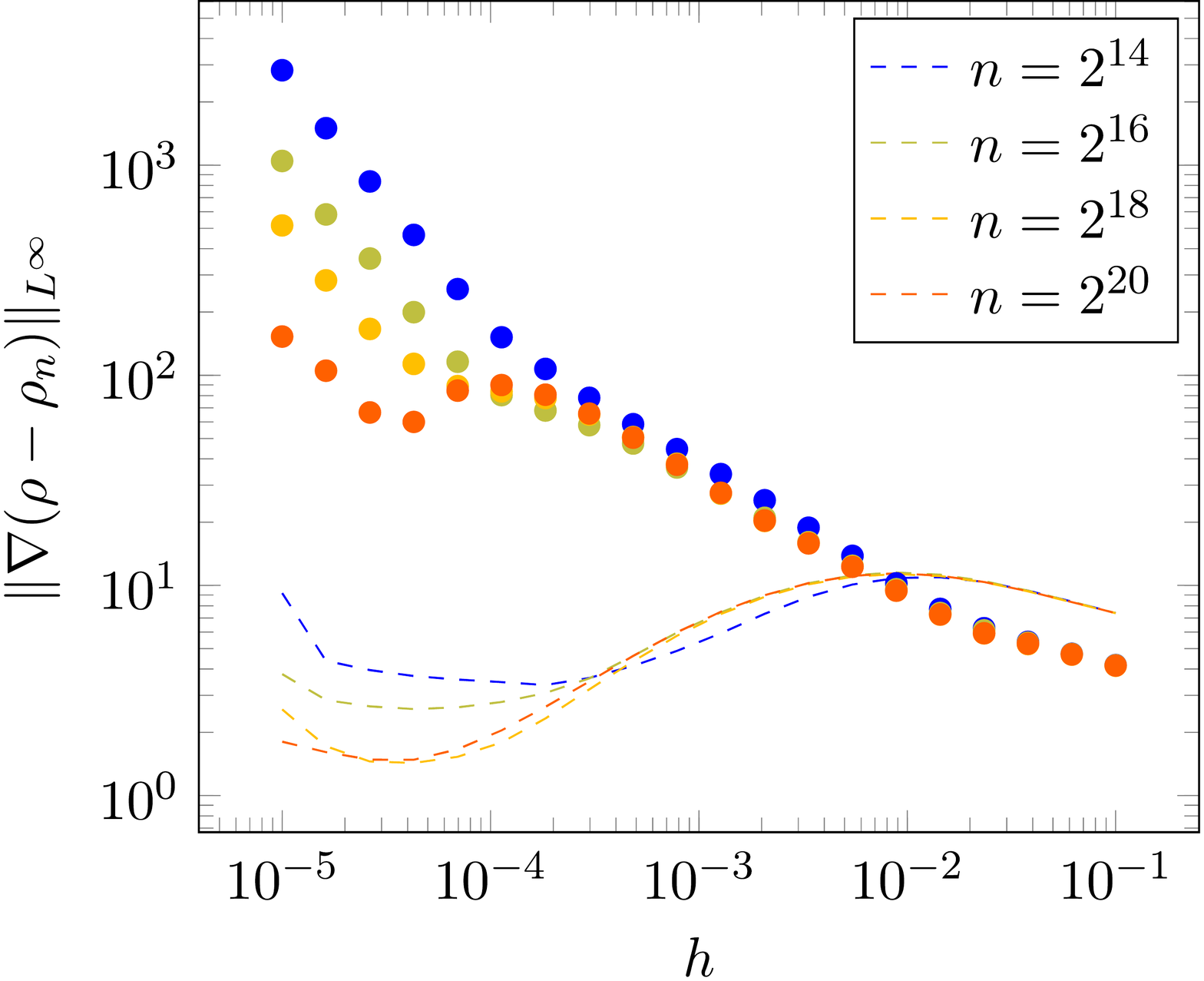}\label{Figure:xydLInfError}}
	\subfloat[][]{\includegraphics[width=0.3\textwidth]{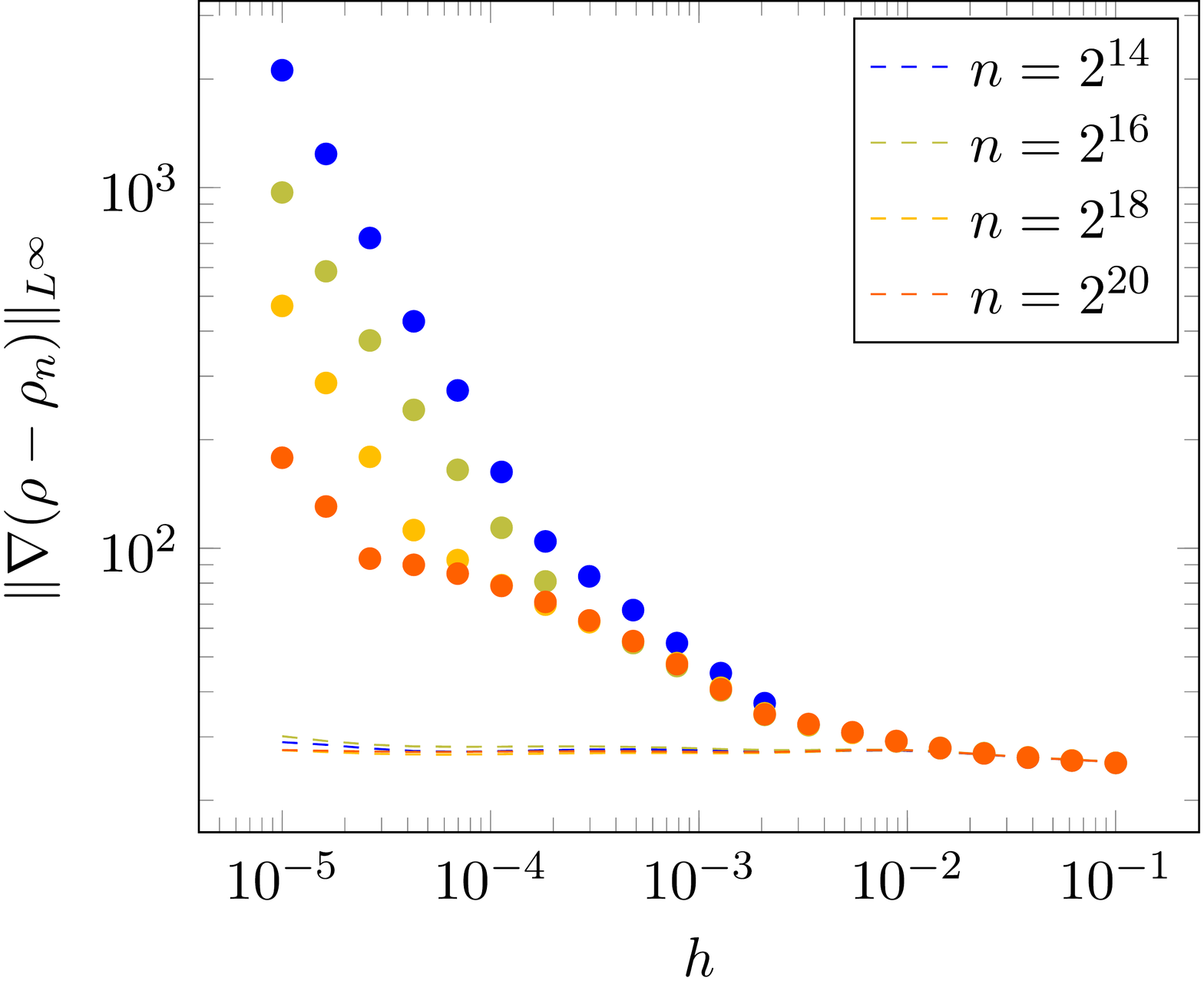}\label{Figure:TrigdLInfError}}
	\caption{$L^{\infty}$ errors for the derivatives of the three densities using the two density estimates. Dotted lines represent the KDE, while dashed lines represent the SKDE. Left: $\rho_1$, centre: $\rho_2$, right: $\rho_3$.
	}
	\label{Figures:dLInfErrors}	
\end{figure}

\begin{figure}[ht!]
	\centering
	\includegraphics[width=0.45\textwidth]{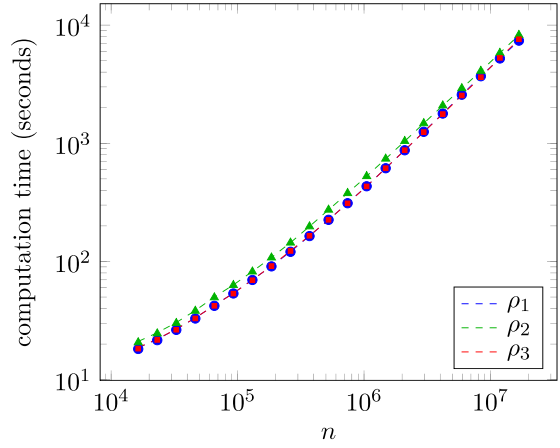}\label{Figure:KDETime}
	\caption{Computation time for KDE and SKDE density estimates. 
	Dashed lines represent KDE computation times, the larger circles, squares and triangles of the same color are SKDE computation times for each density. 
	}\label{Figures:DensityEstimateTime}			
\end{figure}

\subsection{\texorpdfstring{$p$}{p}-Dirichlet energy minimisation}

We wish to compare the accuracy and efficiency of different Dirichlet energies on different densities.
In contrast to the discrete $p$-Dirichlet energies, the continuum $p$-Dirichlet energies are not prohibitively expensive when $n$ is large.
However, when $d$ is large, standard numerical methods become computationally intractable as the number of discretization points increases exponentially in dimension.
It is an area of future work to construct a numerical scheme which can find the minimiser of \eqref{eq:Intro:conpDirichlet} when $d$ is large, perhaps under additional assumptions on the underlying probability density of the data.
For now, we restrict our numerical investigation to problems with $d=2$, and focus on the large data problem, rather than the large dimension problem.
\subsubsection{Numerical Methods}
When $p\neq2$, the minimization problem \eqref{eq:minimizationProblem} we must consider is nonlinear. Therefore, to construct minimisers for the different $p$-Dirichlet energies we use gradient descent.

\paragraph{Discrete $p$-Dirichlet energies}
For~\eqref{eq:Intro:conGraphpDirichlet} we consider the gradient flow
\begin{align*}
\frac{\partial f(\bm{x}_i)}{\partial t} & = \begin{cases} 0 \quad & i=1,\dots,N, \\ -\frac{p}{\eps^p_nn^2}\sum_{j=1}^n W_{ij}(f(\bm{x}_j)-f(\bm{x}_i))|f(\bm{x}_j)-f(\bm{x}_i)|^{p-2}\quad & \text{else}. \end{cases} \\
 & =: \nabla\cEpnCon(f)(\bm{x}_i).
\end{align*}
By construction, the solution to $\nabla\cEpnCon(f) = 0$ is the minimiser of $\cEpnCon$.
To find the minimiser we start with an initial guess $f_0$ (where $f_0$ agrees with the constraints), discretise time and advance via a timestep $\tau$.
Thus, at the $k^{th}$ step:
\[ f_{k+1}(\bm{x}_i) = f_k(\bm{x}_i) - \tau\nabla\cEpnCon(f_k)(\bm{x}_i) \qquad i=N+1,N+2,\dots, n. \]

Gradient descent is an important and well studied method in optimisation. 
Nesterov accelarated gradient descent \cite{Su2016} improves convergence to $\mathcal{O}(\frac{1}{k^2})$, compared to the result for standard gradient descent, which has a convergence rate of $\mathcal{O}(\frac{1}{k})$. Adaptive gradient descent methods such as ADAM \cite{Kingma2014} can further speed up convergence. Proof of convergence of ADAM for convex functions was originally provided in \cite{Kingma2014}, and improvements in the proof were later provided by \cite{Bock2018}, although there is still some contention in the literature of this result \cite{Reddi2018}.

However, independent of the gradient descent algorithm applied, the computation time of the minimization problem scales as $\mathcal{O}(n^3)$. Difficulties also arise when trying to choose the correct value for $\eps_n$.
The asymptotic bounds~\eqref{eq:MainResults:epsilonBounds} provide some reference for a good value to take, but it is uncertain what value will provide a good result for a particular number of samples $n$. 

For state-of-the art calculation of the discrete $p$-Dirichlet minimizer, we use the Newton iteration method and homotopy discussed in \cite{floresrios19AAA}, which reduces the computational cost to $\mathcal{O}(n^2)$. In this case the graph is connected using $k$ nearest neighbours calculations, to avoid complications in choosing $\varepsilon_n$. We present the computation times for this method on our examples in \cref{Figure:ComputationTimesComparison}.

\paragraph{Continuum $p$-Dirichlet energies}

For the continuum $p$-Dirichlets, we require gradient descent on a continuum rather than on discrete data points. The associated gradient flow is found by calculating the Gateaux derivative of $\cEpinfty$. For any $v\in W^{1,p}(\Omega)$,
\begin{align*}
	\partial \cEpinfty(u;v)  :=& \lim_{\delta\to 0^+} \frac{1}{\delta} \left( \cEpinfty(u+\delta v) - \cEpinfty(u) \right), \nonumber\\
	=& p \sigma_\eta \int_{\partial \Omega} v |\nabla u|^{p-2} \rho^2 \nabla u \cdot \dd S - p \sigma_\eta \int_\Omega v \, \mathrm{div}(\nabla u |\nabla u|^{p-2} \rho^2 ) \, \dd x.
\end{align*}
The minimiser will therefore satisfy
\[	 |\nabla u|^{p-2} \nabla u\cdot \mathrm{n} \rho^2 \mathds{1}_{\partial\Omega} - \mathrm{div}(\nabla u |\nabla u|^{p-2} \rho^2) = 0, \]
where $\mathds{1}$ is an indicator function and $\mathrm{n}$ is the outward unit normal to the surface $\partial\Omega$. To find the minimiser we can perform gradient descent using
\begin{align}
	\frac{\partial u}{\partial t} =\begin{cases} \mathrm{div}(\nabla u |\nabla u|^{p-2} \rho^2) \quad\text{ on }\Omega\setminus\partial\Omega,\\
	-|\nabla u|^{p-2} \nabla u \cdot \mathrm{n} \rho^2 + \beta \mathrm{div}(\nabla u |\nabla u|^{p-2} \rho^2)\quad\text{ on }\partial\Omega,\\
	0,\quad\text{ at }\bm{x}_i,i=1,\dots,N
\end{cases}\label{eq:GradDescentpLap}
\end{align}
where $\beta$ is a parameter which allows flux through the boundary $\partial\Omega$. In our simulations we take $\beta=0.01$.

For each gradient step, we need to approximate spacial derivatives of $\rho$ and $f$.
We use pseudo-spectral methods~\cite{Trefethen2000} and domain decomposition~\cite{Boyd2001} to accurately and efficiently apply gradient descent.
Pseudo-spectral (or collocation) methods are popular methods in the construction of numerical solutions to PDEs.
Provided the function in consideration is suitably smooth, these methods produce high precision on coarse meshes.

To gain some intuition of the pseudo-spectral methodology, we consider the one dimensional case on a periodic domain. For a function $f:\bbR\to\bbR$ discretised on a uniform grid $\{x_1,\dots,x_D\}$, where $|x_i-x_{i+1}|=h$, the finite difference approximation $g:\bbR\to\bbR$ of the derivative $f^\prime$ at points $x_i$ is determined by
\[ f^\prime(x_i) \approx g(x_i) = \frac{f(x_{i+1})-f(x_{i-1})}{2h} \]
We can write this as a matrix multiplication, involving a sparse matrix,
\[ \begin{pmatrix} g(x_1) \\ g(x_2) \\  \\	\vdots \\  \\ g(x_D) \end{pmatrix} = h^{-1}
\begin{pmatrix}	0 & \frac{1}{2} &  &  &  & -\frac{1}{2}\\ -\frac{1}{2} & 0 &  & \ddots &  & \\ & 0 &  & \ddots &              & \\ & 0 &  & \ddots & 0 & \frac{1}{2} \\ \frac{1}{2} &  &  &  & -\frac{1}{2} & 0 \end{pmatrix}	\begin{pmatrix} f(x_1) \\ f(x_2) \\ 	\\ \vdots \\  \\ f(x_D)	\end{pmatrix}. \]
This approximation is then accurate to $O(h^2)$, where $h$ is the distance between two grid points.
We may also consider a finite difference approximation which includes the four nearest points,
\[ 
f^\prime(x_i) \approx g(x_i) = 
\frac{-f(x_{i+2}) +8 f(x_{i+1}) - 8f(x_{i-1})+f(x_{i-2})}{12h}
\]
which can be written as another matrix multiplication with matrix,
\[ \begin{pmatrix}
	g(x_1) \\
	g(x_2) \\
	\\
	\vdots \\
	\\
	g(x_D)
	\end{pmatrix}
	=h^{-1}
	\begin{pmatrix}
	 &              & \ddots        &             &        & \frac{1}{12}   & -\frac{2}{3}      &\\
	 &              & \ddots        &-\frac{1}{12}&        &                &  \frac{1}{12}     &\\
	 &              & \ddots        & \frac{2}{3} & \ddots &                &       &\\
	 &              & \ddots        & 0           & \ddots &                &       &\\
	 &              & \ddots        &-\frac{2}{3} & \ddots &  			    &       &\\
	 &-\frac{1}{12} &               & \frac{1}{12}& \ddots &    			&       &\\
	 &\frac{2}{3}   &-\frac{1}{12}  &             & \ddots &    			&       &\\
	\end{pmatrix}
	\begin{pmatrix}
	f(x_1) \\
	f(x_2) \\
	\\
	\vdots \\
	\\
	f(x_D)
	\end{pmatrix} \]
this then increases the order of convergence to $\mathcal{O}(h^4)$, but computation is more expensive, as the matrix representation of differentiation is less sparse.
Spectral methods can be thought of as a limiting finite difference approximation. Under the assumption that the solution is periodic and infinitely differentiable, by considering a discretization in the Fourier domain, we can construct dense differentiation matrices in the spatial domain which can provide $\mathcal{O}(h^m)$ convergence for every $m$. These results can be extended to non-periodic domains by using Chebyshev polynomials instead of a Fourier basis, which for example avoids the {\em Runge phenomenom} (where interpolation errors increase exponentially as the number of gridpoints increases).
For example, on the grid $[-1,1]$ we use the points
\[ x_i = \cos\left(\frac{i\pi}{D}\right),\quad i=1,\dots,D \]
Higher order derivatives and boundary conditions are also included in an intuitive manner.
In higher dimensions, the differential matrices are constructed by taking Kronecker products of one-dimensional pseudo-spectral matrices.
For more details and a concise introduction to pseudo-spectral methods, see~\cite{Trefethen2000}. For some convergence results related to non-linear PDEs and spectral methods, see \cite{Boyd2001}. In the two dimensional problems we are considering, we construct pseudospectral differential matrices $\mathcal{D}_x$ and $\mathcal{D}_y$ using the methods provided in \cite{Trefethen2000,Boyd2001}. Heuristically, once suitable pseudospectral matrices are constructed, the non-linear partial differential equation is discretised in space by replacing gradients $\partial_x,\partial_y$ with the matrices $\mathcal{D}_x,\mathcal{D}_y$ respectively.

For a system where the pointwise constraints $\{\bm{x}_i,i=1,\dots N\}$ are located on the boundary $\partial\Omega$ at Chebyshev gridpoints, \eqref{eq:GradDescentpLap} can be discretised using an explicit Euler method with timestep $\tau$. Given a matrix of points defined by a Kronecker product grid of Chebyshev points $(x_i)_{i=1,\dots D},(y_i)_{i=1,\dots D}$, we define $\bm{u}^n=(u(x_i,y_j),t^n)_{i,j=1,\dots D}$ as the discretised approximation of $u(\bm{x},n\tau)$ at time $n\tau$ for $n\in\bbN$. Given initial condition $\bm{u}^0=\bm{u}_0$, we then have:
\begin{align}
\frac{\bm{u}^{n+1} - \bm{u}^n}{\tau}
=
\begin{cases}
\begin{split}
&\mathcal{D}_x(\mathcal{D}_x \bm{u} ((\mathcal{D}_x\bm{u}^n)^2+(\mathcal{D}_y\bm{u}^n)^2)^{\frac{p-2}{2}} \cdot\bm{\rho}^2)
\\&
\qquad + \mathcal{D}_y(\mathcal{D}_y \bm{u}^n ((\mathcal{D}_x\bm{u}^n)^2+(\mathcal{D}_y\bm{u}^n)^2)^{\frac{p-2}{2}}\cdot \bm{\rho}^2),
\end{split}	
 \qquad\qquad\text{ on }\Omega\setminus\partial\Omega,
\\
\\
\begin{split}
&-\bm{\rho}^2\cdot((\mathcal{D}_x\bm{u}^n)^2+(\mathcal{D}_y\bm{u}^n)^2)^{\frac{p-2}{2}}
((\mathcal{D}_x\bm{u}^n)\bm{n}_x + (\mathcal{D}_x\bm{u}^n)\bm{n}_y)
\\&\qquad+
\beta\Bigg[
\mathcal{D}_x(\mathcal{D}_x \bm{u}^n ((\mathcal{D}_x\bm{u}^n)^2+(\mathcal{D}_y\bm{u}^n)^2)^{\frac{p-2}{2}}\cdot\bm{\rho}^2)
\\&\qquad\qquad+
\mathcal{D}_y(\mathcal{D}_y \bm{u}^n ((\mathcal{D}_x\bm{u}^n)^2+(\mathcal{D}_y\bm{u}^n)^2)^{\frac{p-2}{2}} \cdot\bm{\rho}^2)
\Bigg],
\end{split}
\!\begin{aligned}
&\text{ on }(x_i,y_j) \\
&\in\partial\Omega\backslash\{\bm{x}_i,i=1,\dots N\}
\end{aligned}
\\
\\
\begin{split}0,\end{split}
\qquad\qquad\qquad\qquad\qquad\qquad\qquad\qquad\qquad\qquad\qquad\qquad
\text{ on }\bm{x}_i,i=1,\dots N.
\end{cases} \label{eq:DiscretisedEuler}
\end{align}
where $\bm{n}_x,\bm{n}_y$ are matrices which are zero on $\Omega\backslash\partial\Omega$, and are the $x$ and $y$ components of the outward unit normal of the domain on $\partial\Omega$ respectively, $\bm{\rho} = (\rho(x_i,y_j))_{i,j=1,\dots,D}$, and $\cdot$ represents the pointwise product between two matrices.

However, sharp peaks are generally observed around constrained points in the interior of the domain, which is particularly evident for small values of $p$.
Between each constraint we expect the function to be smooth.
We therefore decompose the domain of interest, such that constraints lie on boundaries between patches of the domain, and match boundary conditions between each patch.

Inside each patch, and on the boundary of the entire domain, the system obeys~\eqref{eq:GradDescentpLap}. If two patches share a boundary, we need to ensure that their values match, and that the flux also matches.
Thus for patches $i$ and $j$ which share the boundary $\partial\Omega_{ij}$, at shared points
\begin{gather}
u_i = u_j,\nonumber\\
\rho^2|\nabla u_i|^{p-2}\nabla u_i\cdot \mathrm{n}_i = -\rho^2|\nabla u_j|^{p-2}\nabla u_j\cdot \mathrm{n}_j,\label{eq:FluxBCs}
\end{gather}
where $\mathrm{n}_i$ is the normal from patch $i$ to $\partial\Omega_{ij}$, and $u_i$ is the value of the function in patch $i$, and similarly for $\mathrm{n}_j$ and patch $j$.
Due to the shape of the grid, for the best accuracy, constraints should be placed between shared corners, however in practice this causes degeneracies due to discretization.
To solve this the boundary conditions between patches that share corners can be altered to account for this, but for simplicity we place constraints shared by more than two patches near, not on, shared corners.

\begin{figure}[ht!]
\centering
\includegraphics[width=0.45\textwidth]{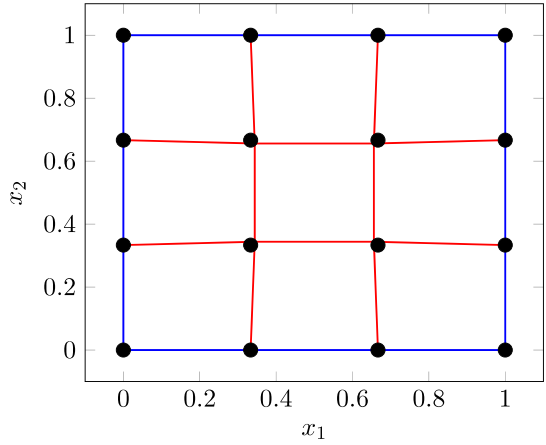}
\caption{Boundary patching method. The blue lines indicate the boundary of the domain, where the original boundary condition in~\eqref{eq:GradDescentpLap} is used. The interior red lines are where~\eqref{eq:FluxBCs} is used. Constraints (black points) are placed on the corners of patches, with the exception of interior corners, which are offset.
}\label{fig:Numerics:BoundaryPatching}		
\end{figure}

The additional matching conditions between patches converts the problem~\ref{eq:GradDescentpLap} to a set of {\em differential algebraic equations} (DAEs)~\cite{kunkel2006differential}.
Our system is semi-explicit, in that defining $\Gamma = \l\cup_{i,j} \partial\Omega_{ij}\r$ to be the boundary between patches, $g=u\lfloor_\Gamma$ as the value of $u$ on $\Gamma$, and $f=u\lfloor_{\Omega\setminus \Gamma}$ as the value of $u$ away from the boundaries, we can write 
\begin{align*}
\frac{\dd \bm{f}}{\dd t} = & F(\bm{f},\bm{g}),\\
0 = & G(\bm{f},\bm{g}).
\end{align*}
where $F$ is given by the equations \eqref{eq:GradDescentpLap} restricted to $\Omega\setminus\Gamma$, and $G$ is given by \eqref{eq:FluxBCs}, and $\bm{f},\bm{g}$ are the spatially discretized $f$ and $g$. It is necessary to use (semi-)implicit methods on DAEs, so that variables determined by algebraic equations can be updated for each timestep. Many implicit methods are available and well studied in the literature~\cite{hairer1991solving}, we consider the semi-implicit Euler method~\cite{Deuflhard1987}.
At step $n+1$, we solve the linear system:
\begin{align*}
\bm{f}^{n+1} - \bm{f}^n & = \tau F(\bm{f}^{n+1},\bm{g}^{n+1}), \\
0 & = G(\bm{f}^{n+1},\bm{g}^{n+1}).
\end{align*}
After discretising $F$ and $G$ using the pseudospectral matrices $\mathcal{D}_x,\mathcal{D}_y$ in an analogous way to \eqref{eq:DiscretisedEuler}, numerically this involves computing a Jacobian in $F$ and $G$, and solving a linear system of equations at each timestep.

\subsubsection{Results}

Figure~\ref{Figure:pLapExamples} shows an example of the minimiser of $\cEpnCon$ (Figures~\ref{Figure:DiscretepLapUniform}, \ref{Figure:DiscretepLapxy} and \ref{Figure:DiscretepLapTrig}) with $n=1500$, the minimiser of $\cEpinftyCon(\cdot;\rho)$ (Figures~\ref{Figure:pLapUniform}, \ref{Figure:pLapxy} and \ref{Figure:pLapTrig}), the minimiser of $\cEpinftyCon(\cdot;\rho_{n,h})$ (Figure~\ref{Figure:npLapUniformKDE}, \ref{Figure:npLapxyKDE} and \ref{Figure:npLapTrigKDE}) and the minimiser of the minimiser of $\cEpinftyCon(\cdot;S_{T,\lambda}(\rho_{n,h}))$ (Figures~\ref{Figure:npLapUniformSKDE}, \ref{Figure:npLapxySKDE} and~\ref{Figure:npLapTrigSKDE}), with $n=2^{17}$ samples for each density, and $p=3$, where the minimisers are achieved using gradient descent methods explained above with a tolerance of $10^{-5}$. We construct the KDE using $h=0.01$, and for the SKDE we choose $T=2^{12}$ and $\lambda=10^{-6}$.
Each patch is discretised with $100$ Chebyshev points. For Figures~\ref{Figure:DiscretepLapUniform}, \ref{Figure:DiscretepLapxy} and \ref{Figure:DiscretepLapTrig} we chose $\eps$ via the tuning procedure described above.

\begin{figure}
\centering
\subfloat[][]{\includegraphics[width=0.3\textwidth]{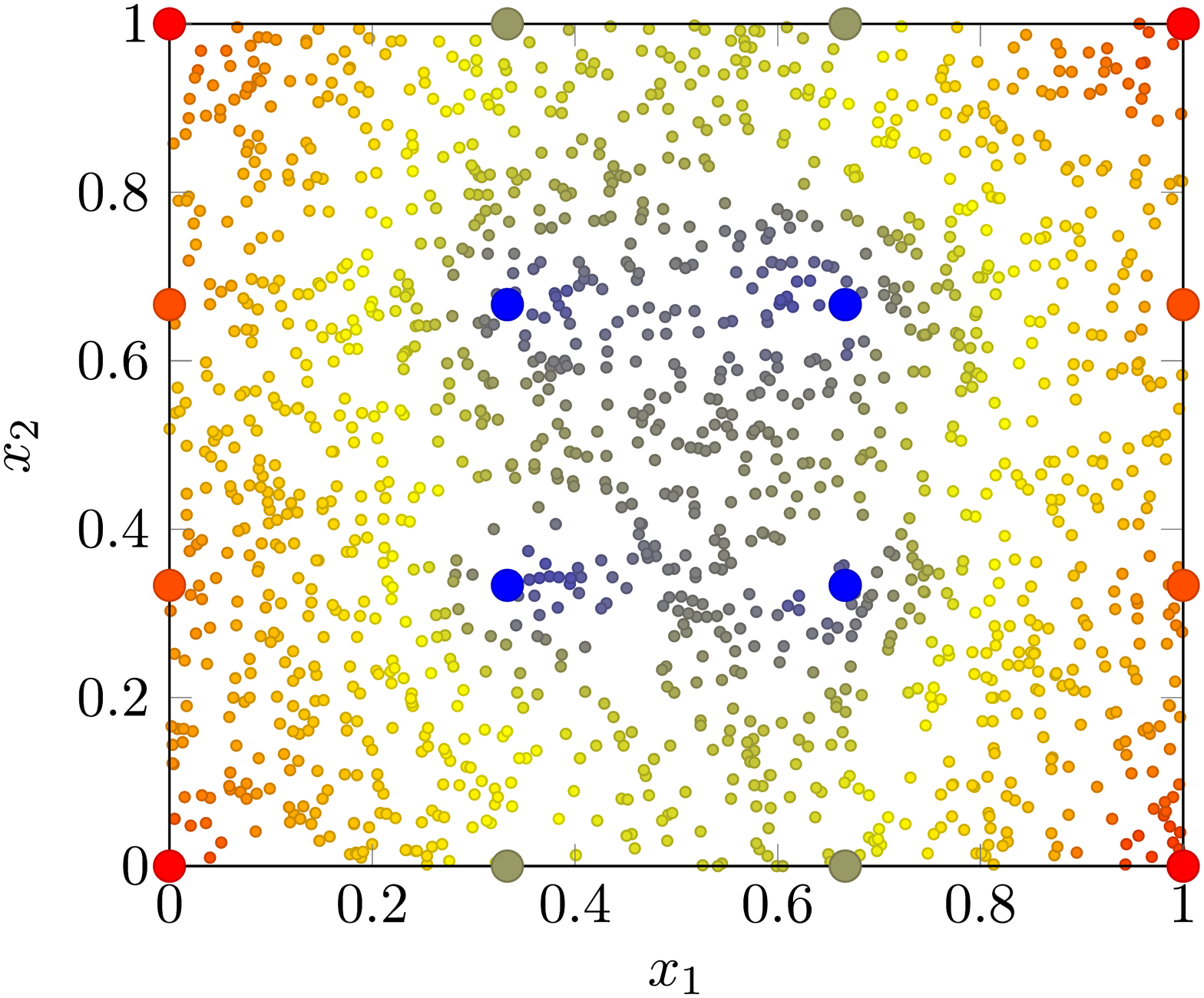}\label{Figure:DiscretepLapUniform}}
\subfloat[][]{\includegraphics[width=0.3\textwidth]{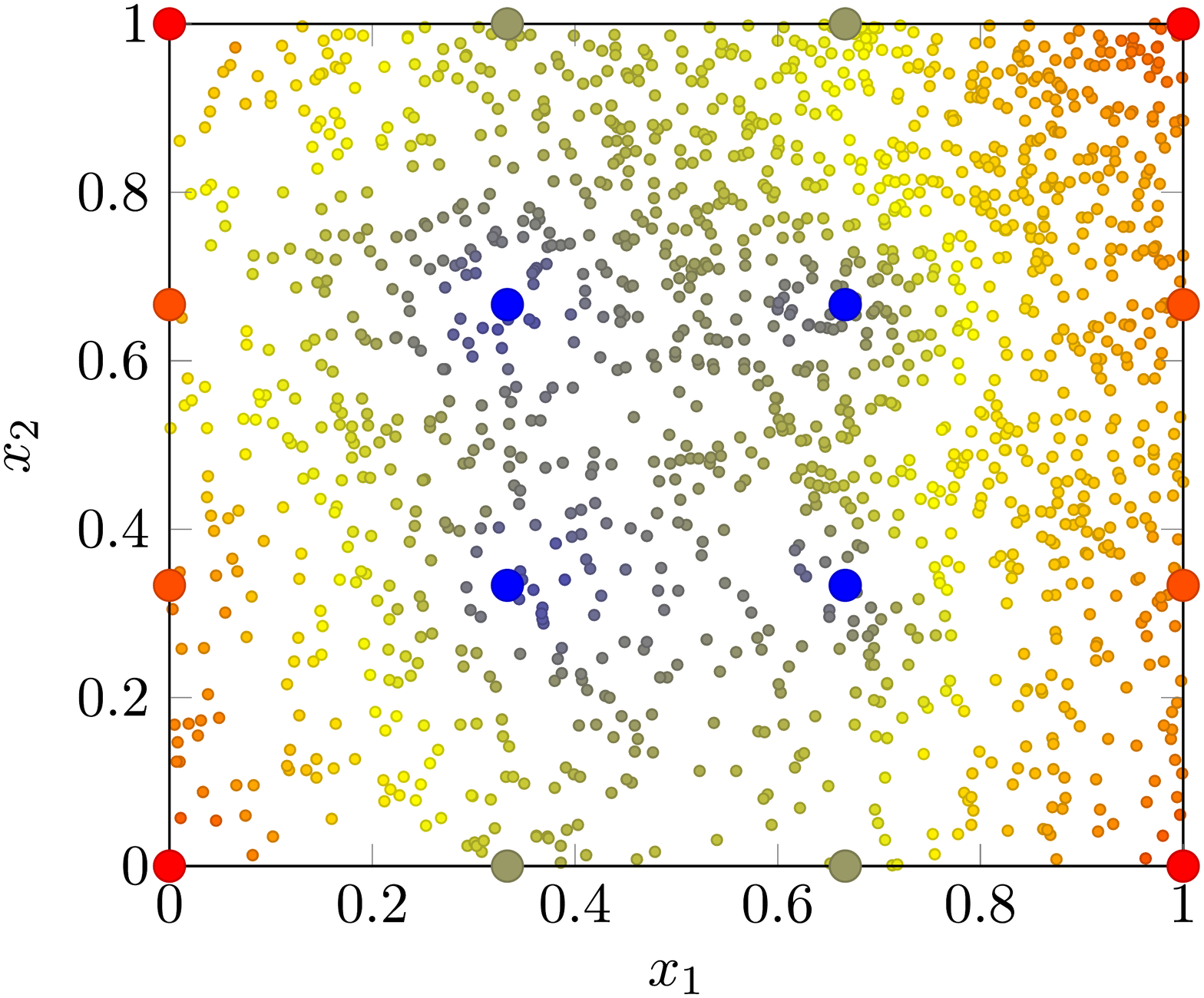}\label{Figure:DiscretepLapxy}}
\subfloat[][]{\includegraphics[width=0.3\textwidth]{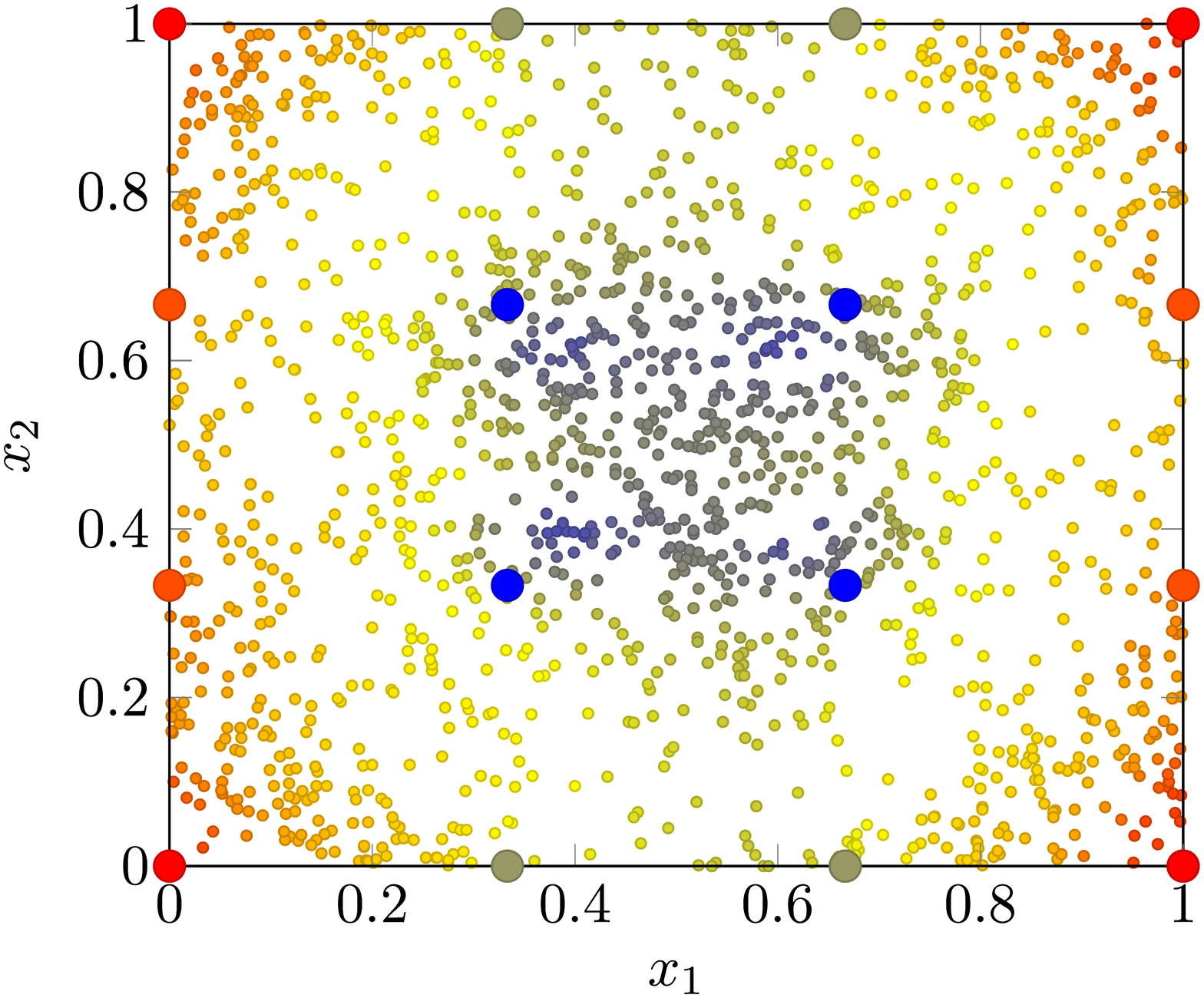}\label{Figure:DiscretepLapTrig}}
\hspace{0.02\textwidth}
\subfloat{\includegraphics[width=0.07\textwidth]{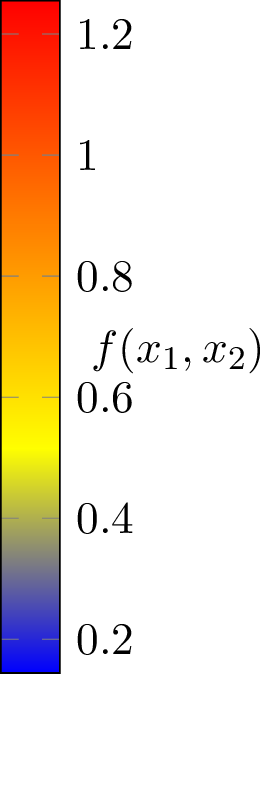}}
\addtocounter{subfigure}{-1}
\\
\subfloat[][]{\includegraphics[width=0.30\textwidth]{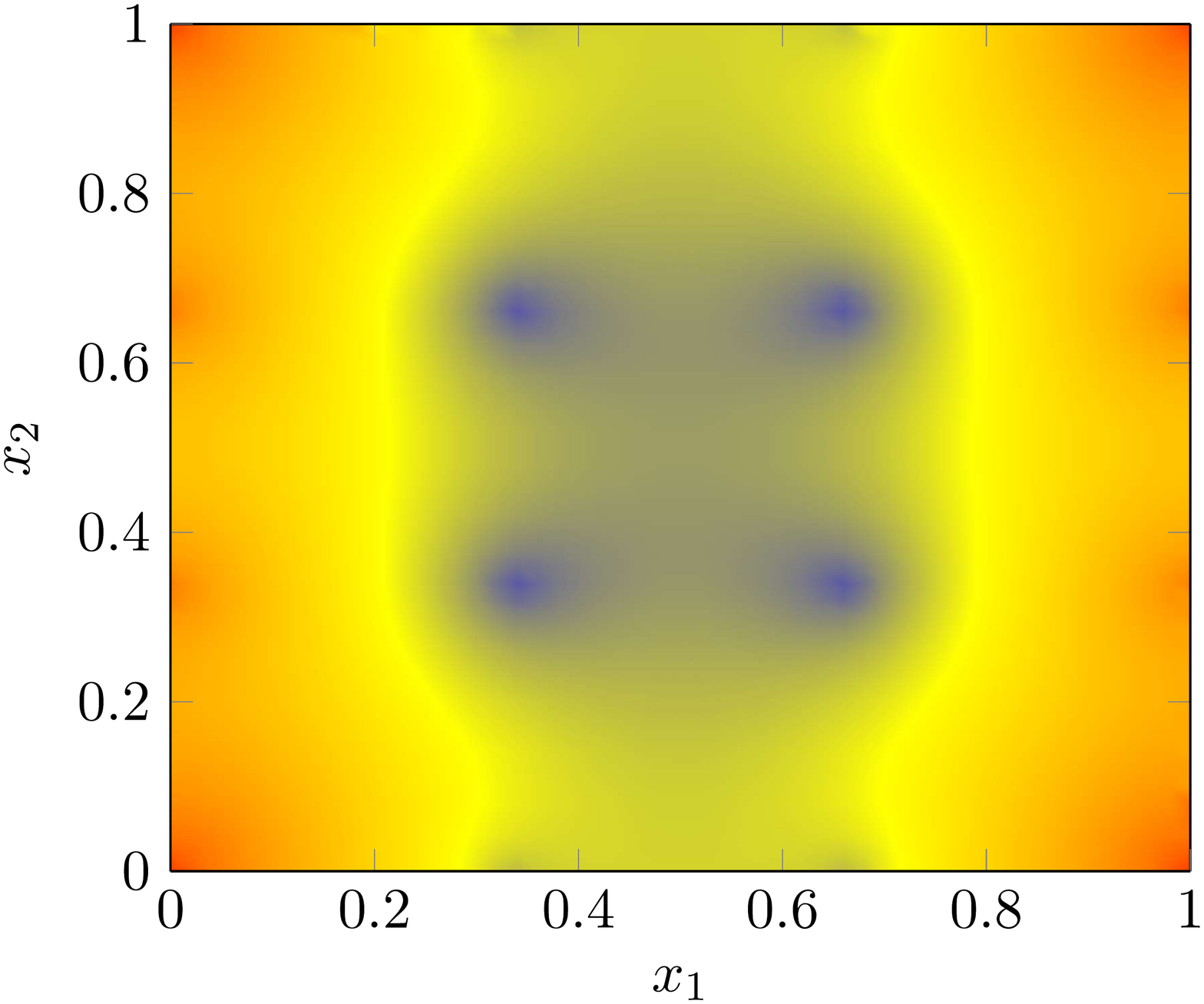}\label{Figure:pLapUniform}}	
\subfloat[][]{\includegraphics[width=0.30\textwidth]{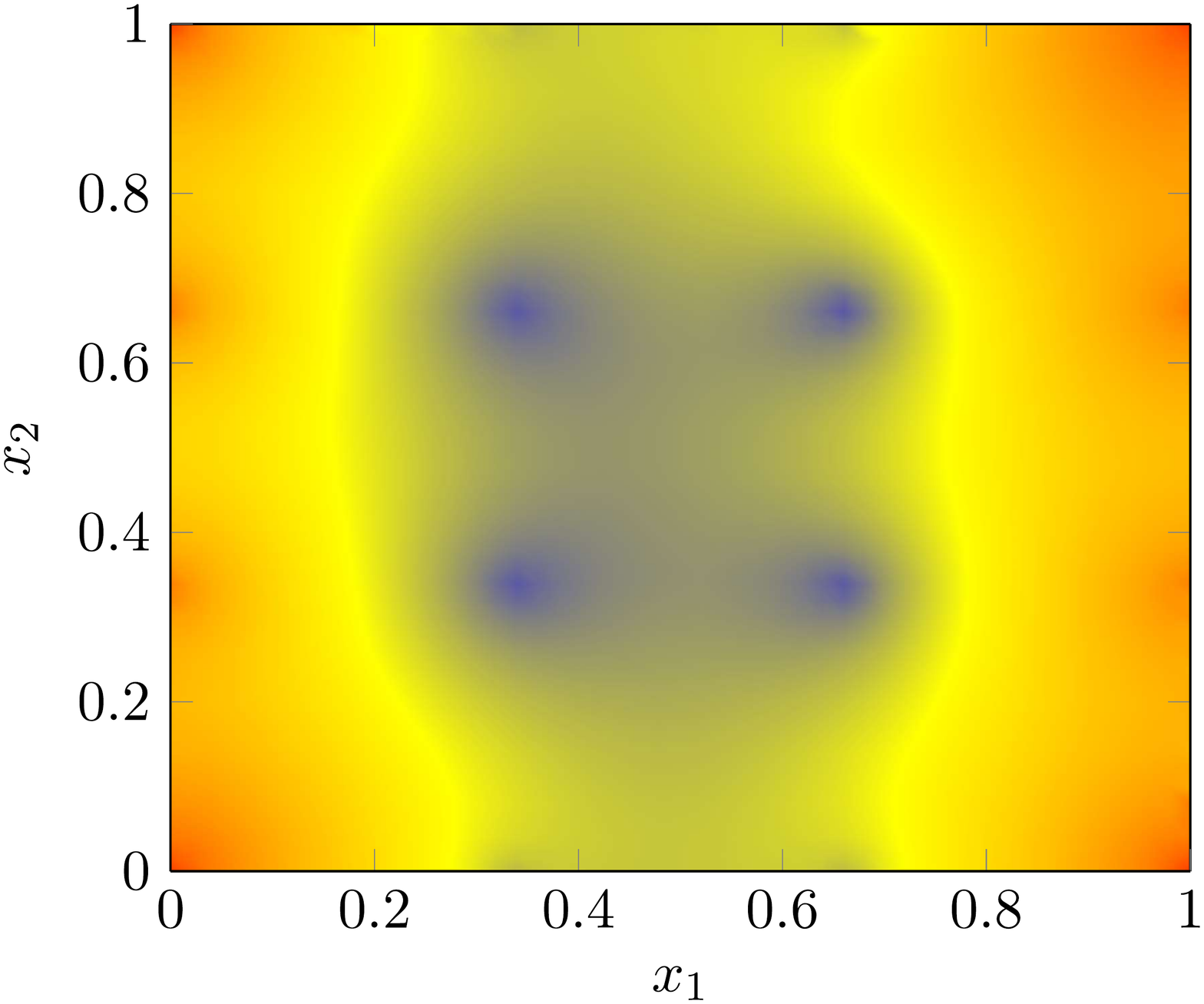}\label{Figure:pLapxy}}	
\subfloat[][]{\includegraphics[width=0.30\textwidth]{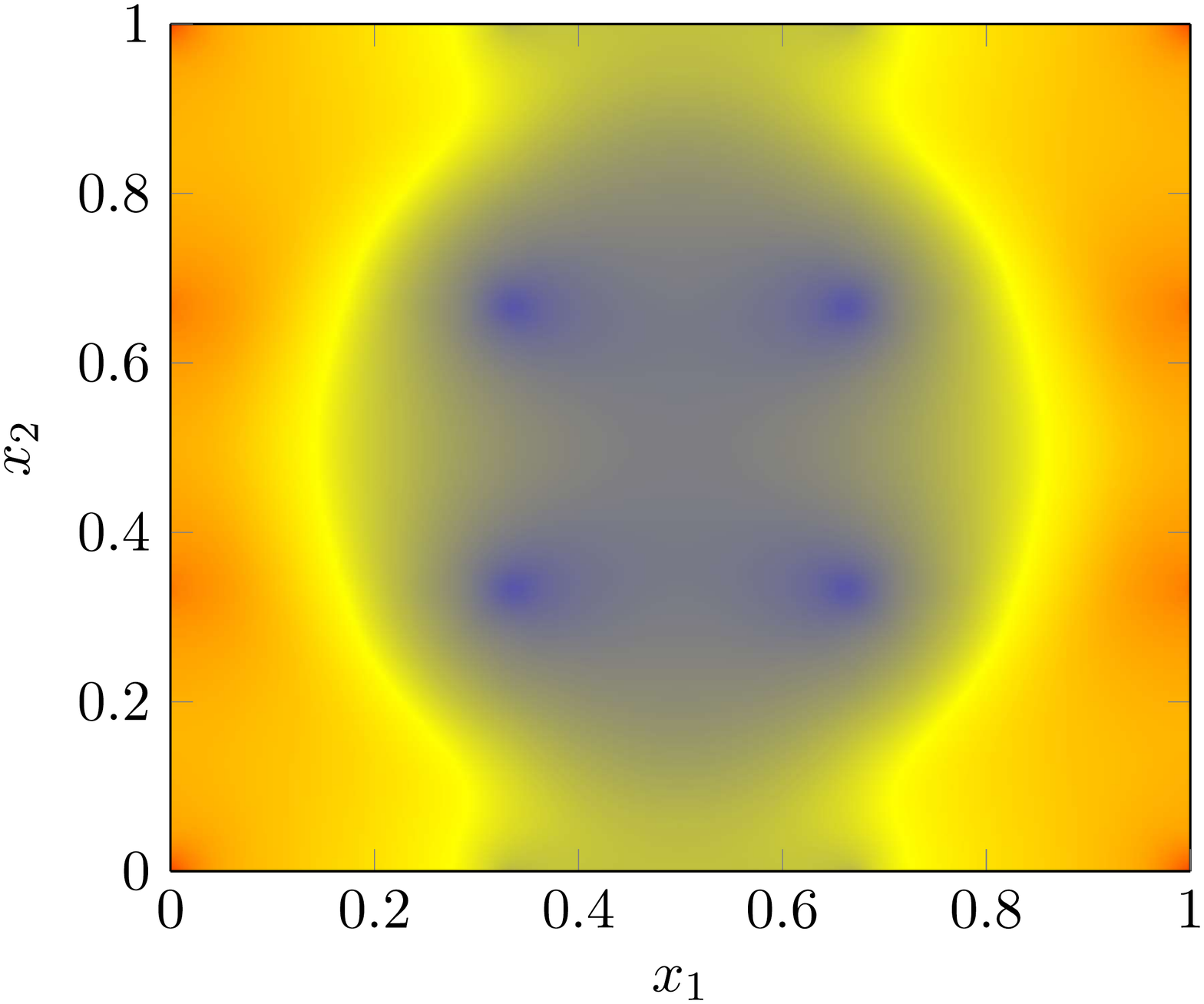}\label{Figure:pLapTrig}}
\hspace{0.02\textwidth}
\subfloat{\includegraphics[width=0.07\textwidth]{Figs/GenerateFigures-figure30.png}}
\addtocounter{subfigure}{-1}
\\
\subfloat[][]{\includegraphics[width=0.30\textwidth]{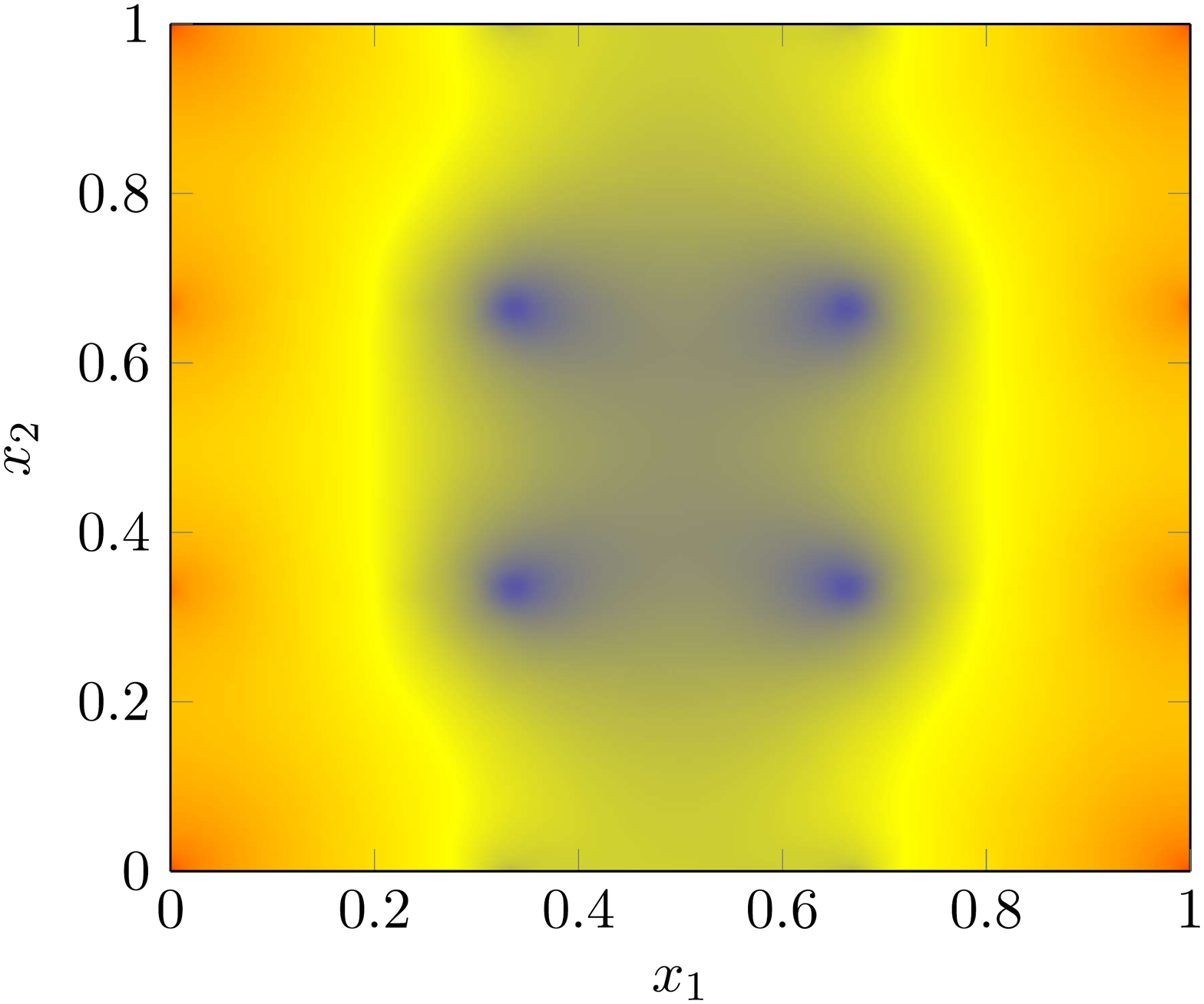}\label{Figure:npLapUniformKDE}}
\subfloat[][]{\includegraphics[width=0.30\textwidth]{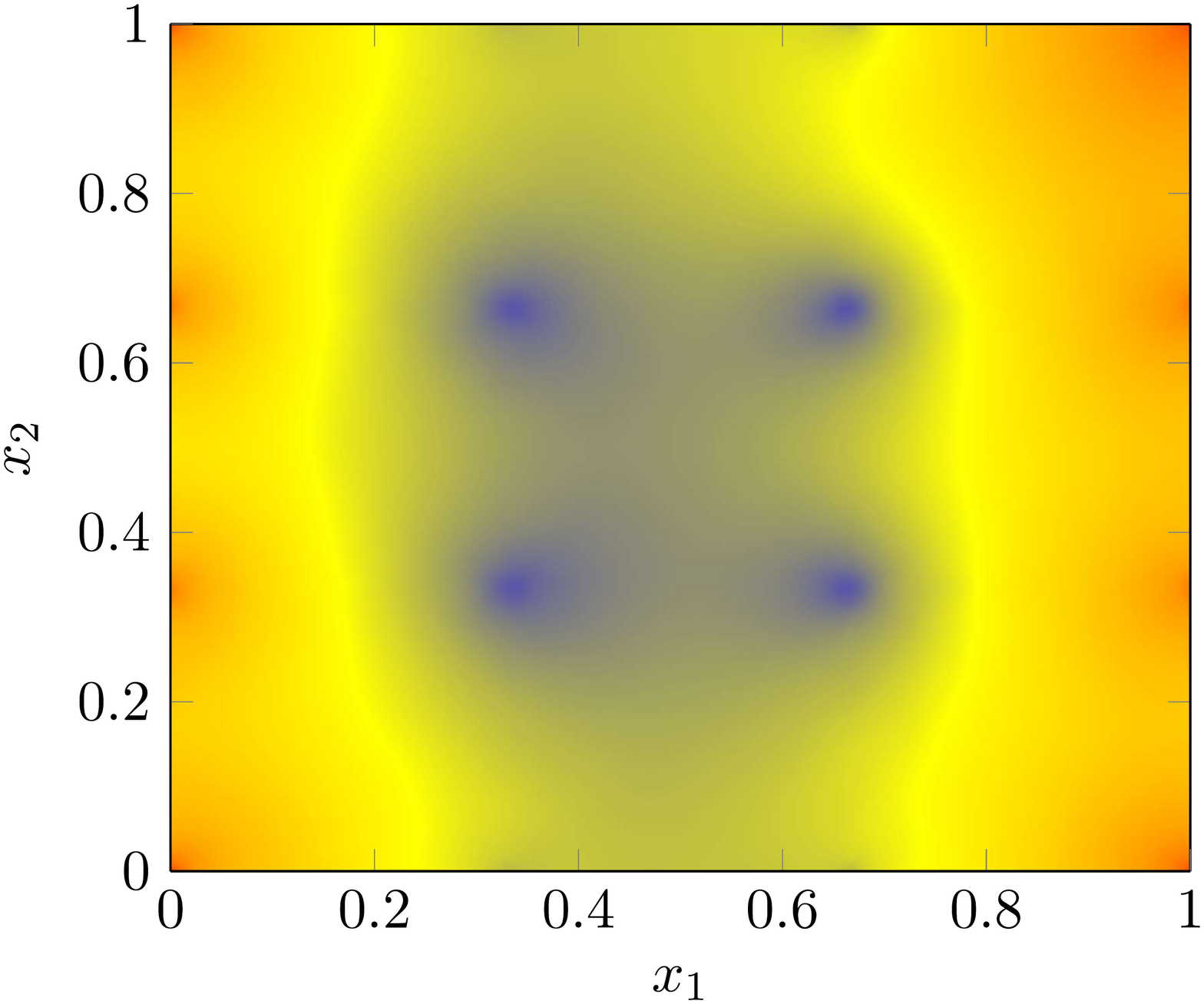}\label{Figure:npLapxyKDE}}	
\subfloat[][]{\includegraphics[width=0.30\textwidth]{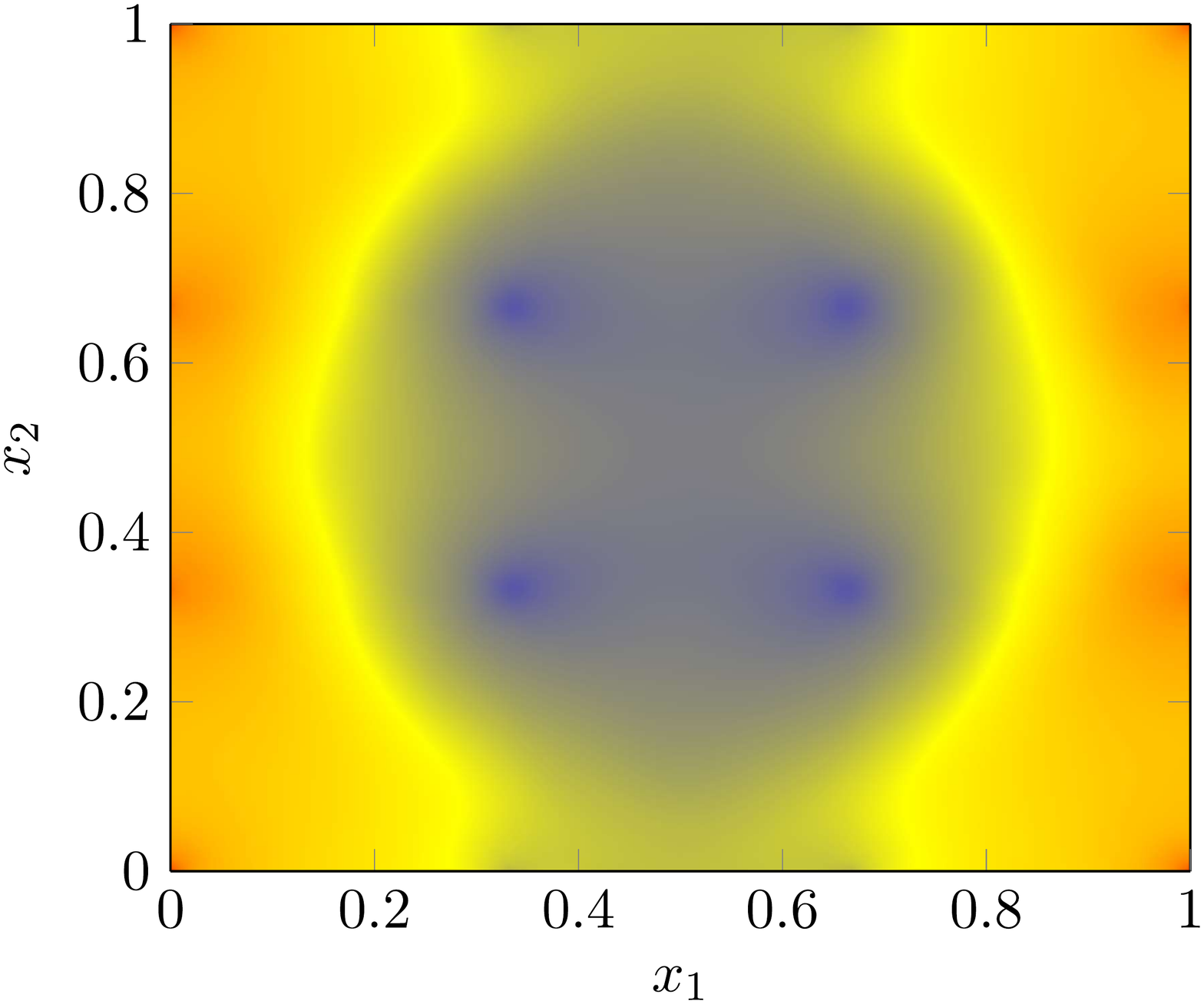}\label{Figure:npLapTrigKDE}}
\hspace{0.02\textwidth}
\subfloat{\includegraphics[width=0.07\textwidth]{Figs/GenerateFigures-figure30.png}}
\addtocounter{subfigure}{-1}
\\
\subfloat[][]{\includegraphics[width=0.30\textwidth]{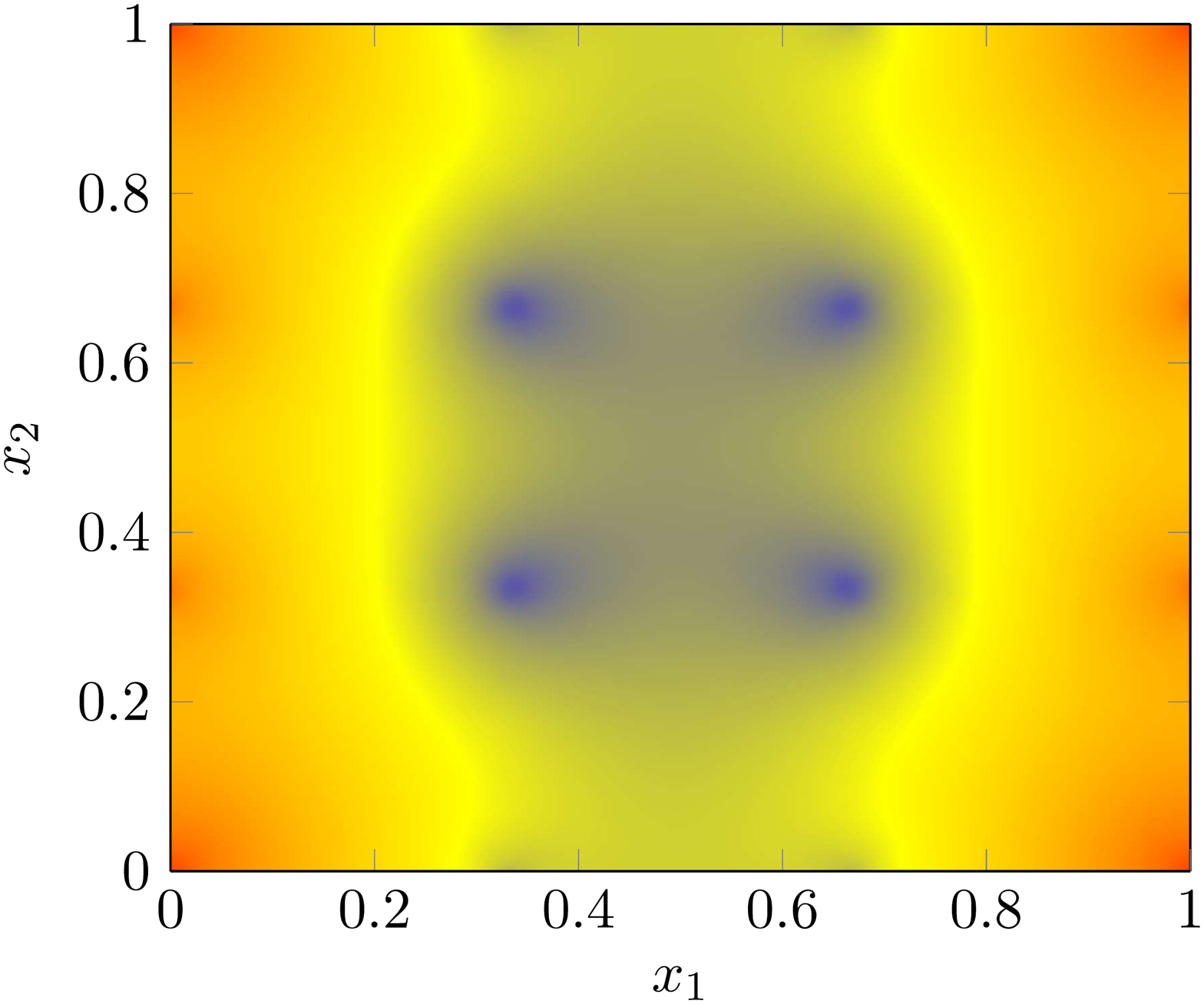}\label{Figure:npLapUniformSKDE}}
\subfloat[][]{\includegraphics[width=0.30\textwidth]{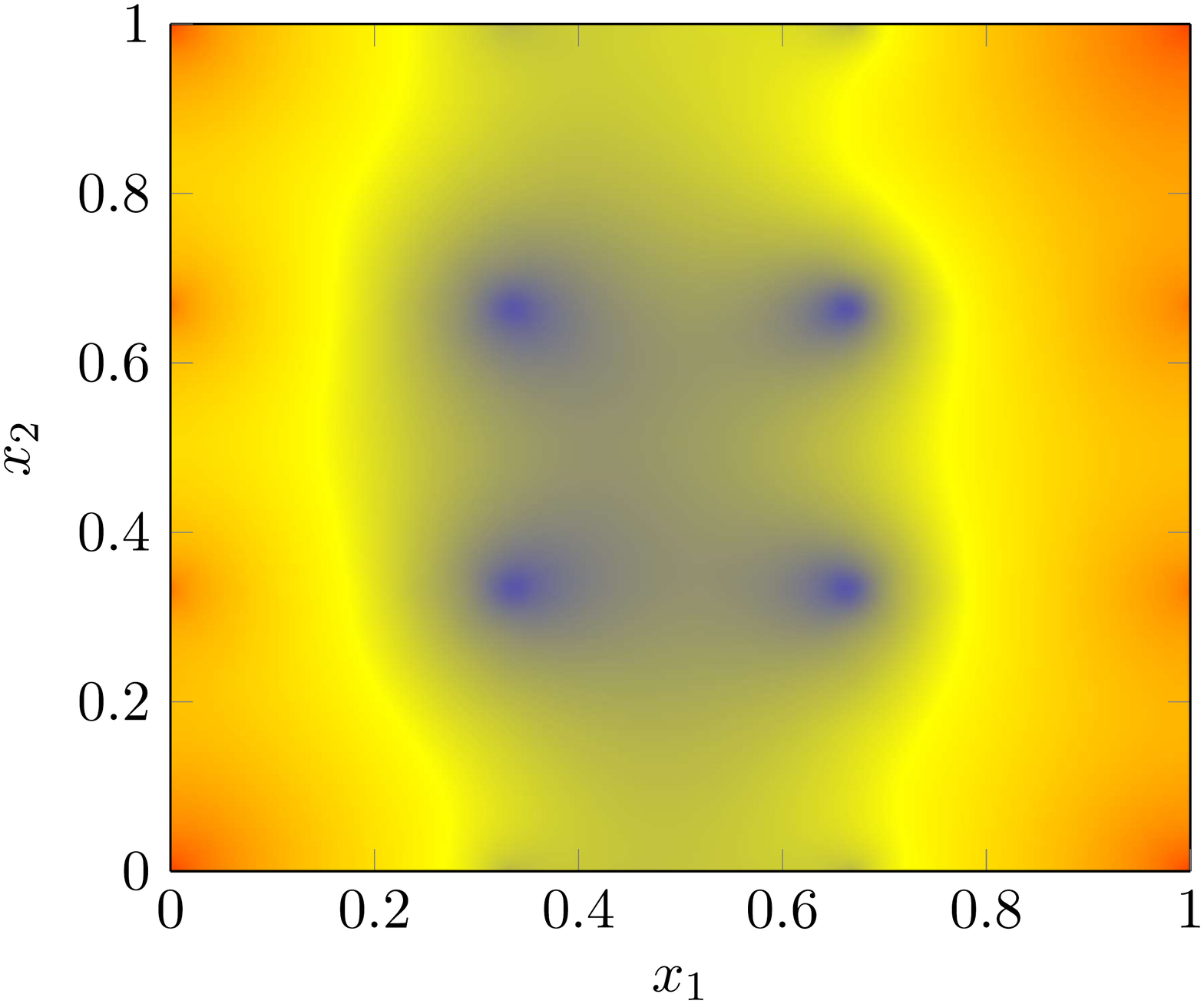}\label{Figure:npLapxySKDE}}	
\subfloat[][]{\includegraphics[width=0.30\textwidth]{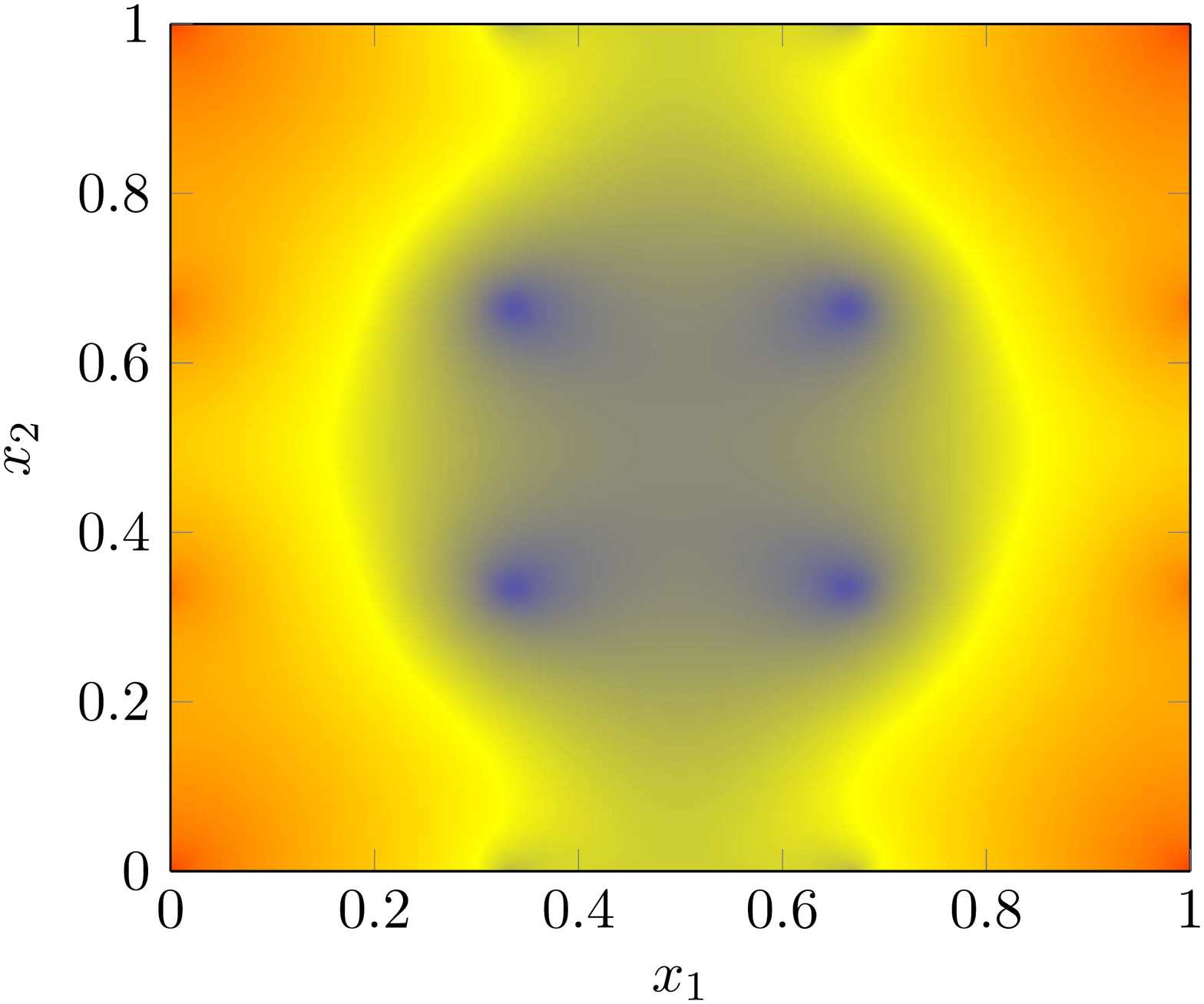}\label{Figure:npLapTrigSKDE}}
\hspace{0.02\textwidth}
\subfloat{\includegraphics[width=0.07\textwidth]{Figs/GenerateFigures-figure30.png}}
\addtocounter{subfigure}{-1}
\\
\caption{Numerically constructed minimisers for $\cEpnCon$ (first row), ground truth $\cEpinfty(\cdot;\rho)$ (second row), the KDE $\cEpinftyCon(\cdot;\rho_{n,h})$ (third row), and the SKDE $\cEpinftyCon(\cdot;S_{T,\lambda}(\rho_{n,h}))$ (fourth row).
The first column is for data points sampled from $\rho_1$, the second column for data points sampled from $\rho_2$, and the third column is for data points sampled from $\rho_3$. In these examples we have taken $p=3$.
} \label{Figure:pLapExamples}
\end{figure}

In Figure~\ref{fig:Numerics:np-pError}, we give the $L^\infty$ errors between the minimisers of $\cEpinftyCon(\cdot;\rho)$ and $\cEpinftyCon(\cdot;\rho_{n,h})$ or $\cEpinftyCon(\cdot;S_{T,\lambda}(\rho_{n,h}))$, for different values of $n$. The results reflect the conclusions in Section~\ref{subsec:Numerics:DensityEstimates}, the SKDE is an improvement for $\rho_1$ and $\rho_2$, but due to the large fluctuations in gradient in $\rho_3$, smoothing the KDE increases the associated error. Although using the SKDE in the minimisation problem improves the accuracy of the result, we note that the choices of $\lambda$ and $T$ were not necessarily optimal in these examples, as the optimal choice of parameters can lead to density estimates which have negative values, causing numerical errors during gradient descent. It is a topic of future work to consider and apply the associated smoothing spline problem for strictly positive functions in the SKDE. Finally, we present the CPU time for each gradient descent computation in Figure~\ref{Figure:pLapComputationTimes}.  For low dimensional problems with large amounts of data, a continuum approach is shown to be computationally cheaper, and we note that density estimation can be parallelised to reduce computation time of the continuum method.

\begin{figure}[ht!]	
	\centering
	\includegraphics[width=0.45\textwidth]{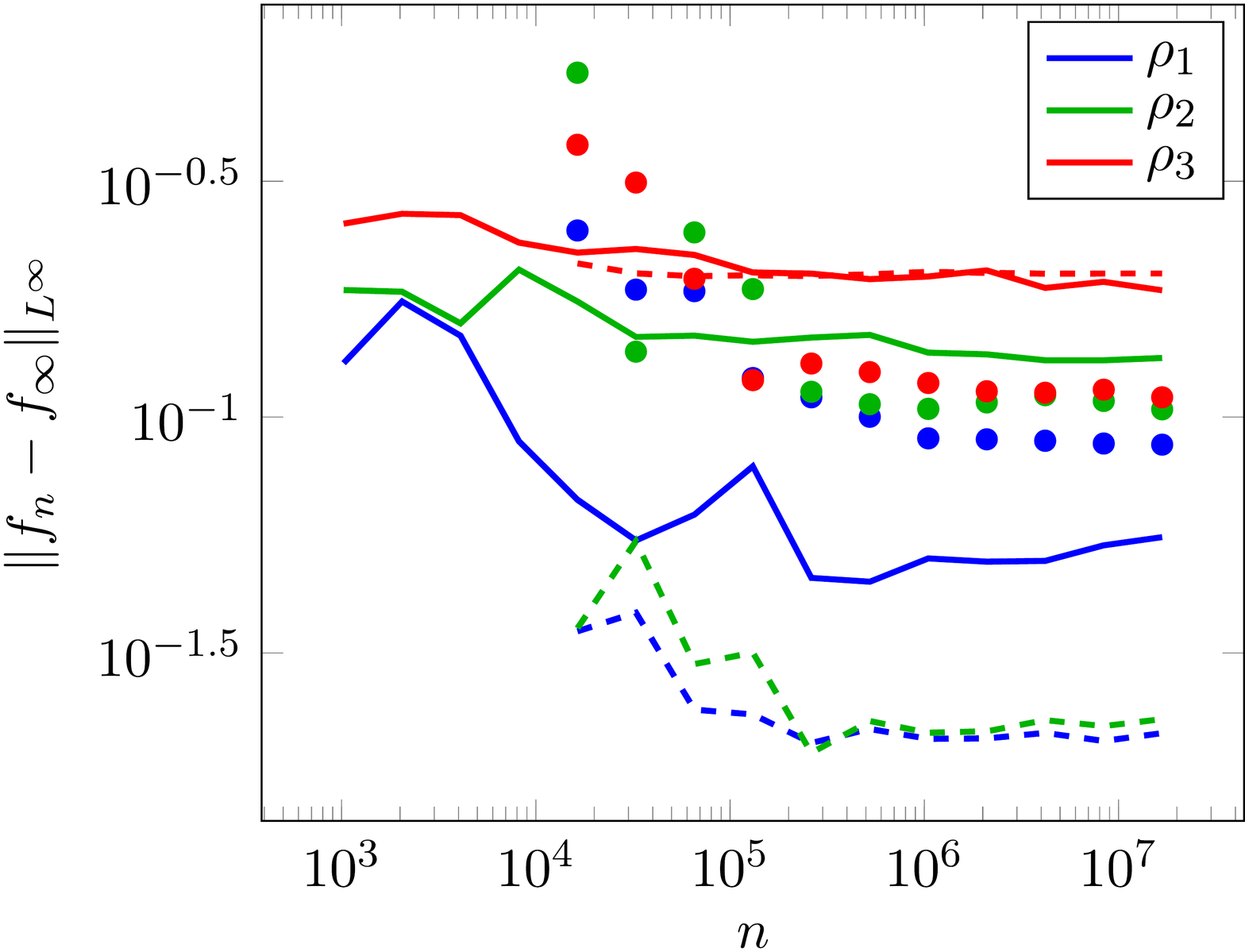}\label{Figure:pLapLInfError}
	\caption{
		$L^{\infty}$ error between the computed minimizers $f_n$ of $\mathcal{E}_\infty^{(p)}(\cdot;\rho_n)$ against the minimizer $f_\infty$ of $\mathcal{E}_\infty^{(p)}(\cdot,\rho)$, for the KDE (dotted) and SKDE (dashed) in the $p$-Dirichlet energy minimization problem, where $\lambda=10^{-6}$ and $T=2^{12}$ and $p=3$, and results from the method discussed in \cite{floresrios19AAA} (solid).
	}\label{fig:Numerics:np-pError}	
\end{figure}

\begin{figure}[ht!]
	\centering
	\subfloat[][]{	\includegraphics[width=0.45\textwidth]{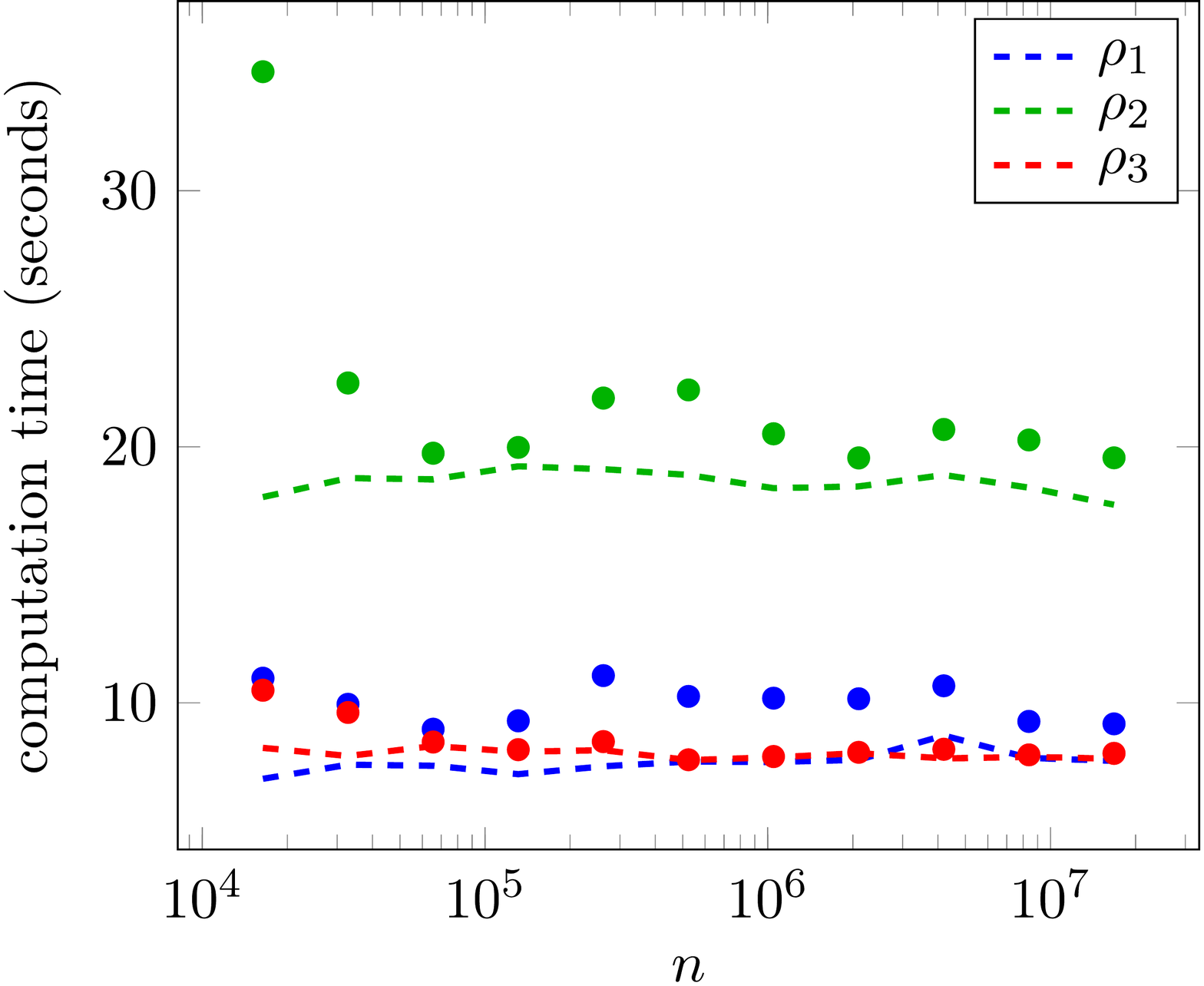}\label{Figure:NonDensityComputationTime}}	
	\subfloat[][]{\includegraphics[width=0.45\textwidth,valign=b]{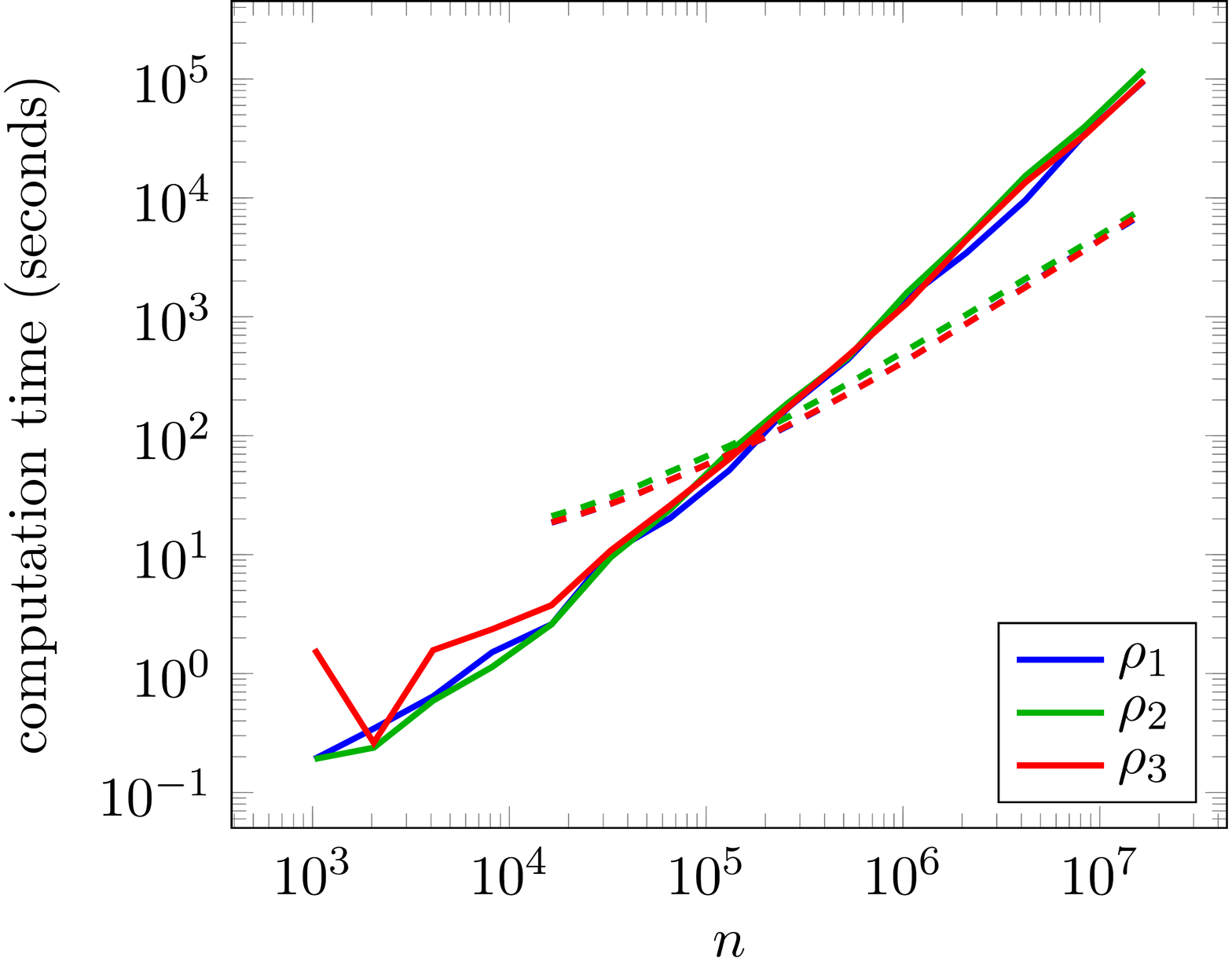}\label{Figure:ComputationTimesComparison}}
	\caption{Computation times for the different methods. Left: computation times for the continuum $p$ Dirichlet energy calculation, once the density estimate is constructed. Dotted lines represent the minimisation problem using the KDE, dashed are using the SKDE, where $\lambda=10^{-6}$ and $T=2^{12}$ and $p=3$. We note that computation time using the exact density is 11.03, 11.32, and 21.77 seconds respectively for $\rho_1, \rho_2$ and $\rho_3$. Right: Comparison of the continuum method presented (using SKDE to approximate the density, dashed line) in this paper and the discrete methodology discussed in \cite{floresrios19AAA} (solid line).
	}\label{Figure:pLapComputationTimes}			
\end{figure}

\section{Conclusions and Future Work} \label{sec:Conc}
We have shown that the appropriate limit is attained when using a density estimate in the constrained continuum $p$-Dirichlet minimization problem, provided the density estimate converges uniformly almost surely. In addition, we have shown that the kernel density estimate meets the convergence criterion, and using smoothing splines can improve the approximation without affecting convergence. The non-local $p$-Dirichlet energy convergence result also provides an insight into the link between the discrete $p$-Dirichlet energy and density estimation in the continuum analogue.

We have also provided numerical examples using different probability densities, which show that the constrained continuum $p$-Dirichlet energy can be used effectively in problems with a large amount of data and low dimension.

Future improvements to the scheme include incorporating a positivity constraint in the smoothing spline calculation for more robust density estimation. We would also like to consider different density (parametric or non-parametric) estimation methods in the minimization problem, and construct numerically stable continuum methods for when $p$ is large.

\section*{Acknowledgements}

This work was supported by The Alan Turing Institute under the EPSRC grant EP/N510129/1.
OMC is a Wellcome Trust Mathematical Genomics and Medicine student supported financially by the School of Clinical Medicine, University of Cambridge.
TH was supported by The Maxwell Institute Graduate School in Analysis and its Applications, a Centre for Doctoral Training funded by the EPSRC (EP/L016508/01), the Scottish Funding Council, Heriot-Watt University and the University of Edinburgh.
CBS acknowledges Leverhulme Trust (Breaking the non-convexity barrier, and Unveiling the Invisible), the Philip Leverhulme Prize, the EPSRC grants EP/M00483X/1 and EP/N014588/1, the European Union Horizon 2020 Marie Skodowska-Curie (NoMADS, grant agreement No 777826, and CHiPS, grant agreement No 691070), the Cantab Capital Institute for the Mathematics of Information (CCIMI) and the Alan Turing Institute.
MT is grateful for the support of the CCIMI, Cambridge Image Analysis (CIA) and has received funding from the European Research Council (ERC) under the European Union's Horizon 2020 research and innovation programme (grant agreement No 647812).
KCZ was supported by the Alan Turing Institute under the EPSRC grant EP/N510129/1.
We would like to thank Dr B. Goddard for the spectral code used in our numerical experiments.
\vspace{2\baselineskip}

\bibliographystyle{plain}
\bibliography{projectBibliography}

\end{document}